\documentclass[11pt,a4paper,reqno]{amsart}
\usepackage[margin=1.1in]{geometry}
\usepackage{amssymb,amsmath,graphicx,amsfonts}
\usepackage{mathrsfs}
\usepackage{multirow}
\usepackage{graphicx,epsfig}
\usepackage{color}
\usepackage{enumerate,enumitem}
\usepackage{empheq}
\usepackage{cases}
\usepackage{cite}
\usepackage{mathrsfs}
\usepackage{amsgen}
\usepackage{amscd}
\allowdisplaybreaks

\usepackage{booktabs}
\usepackage{amsmath}
\usepackage{latexsym}
\usepackage{amssymb,amsmath,graphicx,amsfonts,euscript}
\usepackage{amsfonts,bm,hyperref,subcaption}
\usepackage{enumerate}
\newtheorem{theorem}{Theorem}[section]

\newtheorem{lemma}[theorem]{Lemma}
\newtheorem{proposition}[theorem]{Proposition}
\newtheorem{remark}[theorem]{Remark}
\theoremstyle{definition}

\newtheorem{example}[theorem]{Example}

\numberwithin{theorem}{section}
\numberwithin{equation}{section}

\newcommand{\Tr}{\text{Tr}}

\newcommand{\Man}{\mathcal{M}}

\definecolor{link}{rgb}{0.45,0.51,0.67}

\renewcommand{\div}{\text{div}}
\newcommand{\A}{{\bf A}}

\newcommand{\M}{{\bf M}}

\newcommand{\g}{{\bf g}}

\newcommand{\ka}{\boldsymbol{\kappa}}
\newcommand{\kas}{\boldsymbol{\kappa^*}}
\newcommand{\bv}{{\bf v}}
\newcommand{\w}{{\bf w}}

\newcommand{\dkappa}{{\bf d_\kappa}}

\newcommand{\ekappa}{{\bf e_\kappa}}
\newcommand{\gs}{{\bf g^*}}
\newcommand{\Eh}{\mathcal{E}_h}
\newcommand{\N}{\mathcal{N}}
\DeclareMathOperator{\sff}{\mathrm{I\!I}}

\begin{document}

\title[]{Convergence of Finite Element methods for Ricci Flow}\thanks{This work was supported by the Research Grants Council of Hong Kong (Project No. PolyU/GRF15303022 and PolyU/RFS2324-5S03), NSFC Key Program (project no. 12231003), and the U.S. National Science Foundation award DMS-2533499.}

\author[]{Guangwei Gao,\,\, Evan S. Gawlik,\,\, and\,\, Buyang Li}
\address{Guangwei Gao, Buyang Li: Department of Applied Mathematics, The Hong Kong Polytechnic University, Hong Kong.
{\rm Email address: {\tt guang-wei.gao@polyu.edu.hk} and {\tt buyang.li@polyu.edu.hk}
}}
\address{Evan S. Gawlik: Department of Mathematics and Computer Science, Santa Clara University, Santa Clara, USA.
{\rm Email address: {\tt egawlik@scu.edu}
}}

\subjclass[2020]{35R01, 35K55, 53E20, 65M60, 65M12}

\keywords{Ricci flow, intrinsic curvature flow, solution-driven metric evolution, Regge finite element.}

\maketitle

\begin{abstract}
    The convergence of a finite element discretization for the two-dimensional Ricci flow is proved. 
    In this method, the Ricci flow on a two-dimensional surface is formulated into solution-driven metric evolution, with the metric evolution driven by the Gauss curvature.  The Gauss curvature satisfies a parabolic equation that in turn depends on the metric, thereby enhancing the parabolic structure of the problem. The solution-driven metric evolution formulation is discretized by the finite element method, and the convergence of finite element approximations is proved by adapting the matrix-vector formulation developed in the literature initially for studying solution-driven surface evolution in extrinsic curvature flow.
    In addition to its convergence, the proposed method also preserves important geometric structures of the Ricci flow at the discrete level, such as area conservation and the Gauss-Bonnet theorem.
    Extensive numerical experiments are presented to demonstrate the convergence of the proposed method as well as the simulation of Ricci flow.
\end{abstract}

\setlength\abovedisplayskip{4pt}
\setlength\belowdisplayskip{4pt}


\section{Introduction}\label{sec:intro}
The Ricci flow, introduced by Richard Hamilton in \cite{hamilton1982three} in 1982, is a geometric evolution equation that deforms the Riemannian metric on a manifold according to its induced curvature. Since then, the Ricci flow has been well-developed and has become a powerful tool in geometry and topology. In particular, Grigori Perelman built upon Ricci flow techniques and proved the Poincar\'{e} conjecture in his groundbreaking works \cite{perelman2002entropy, perelman2003finite, perelman2003ricci}.
The Ricci flow has a wide range of applications, including image processing \cite{sonn2014ricci}, 3D face matching and registration \cite{zeng20083d}, surface parametrization and re-meshing \cite{jin2006conformal}, and conformal brain mapping in medical imaging \cite{wang2011brain}.

For the numerical study of Ricci flow, Chow and Luo introduced a discrete version of the Ricci flow in \cite{chow2003combinatorial} based on discrete differential geometry.  It was further developed in \cite{jin2008discrete, gu2008computational}. In this method, the Gaussian curvature is approximated by the angle defect, and at each vertex, a scalar conformal factor evolves according to the angle defect. The long-time convergence to a discrete constant curvature metric for this method has been proven in \cite{chow2003combinatorial}.
{However, the convergence of this method to the exact solution of the Ricci flow remains an open question. In other words, it is not guaranteed that this scheme will provide an accurate approximation of the solution of Ricci flow.}  In many applications, this is not a serious concern, as the goal is often to construct a discrete conformal mapping from a given triangulation to one with prescribed discrete curvature, rather than to accurately approximate the Ricci flow. 
For numerical studies of the Ricci flow in certain special cases, we further refer to \cite{garfinkle2005numerical, rubinstein2005visualizing, taft2010intrinsic}.

Compared to Ricci flow, the numerical methods for other extrinsic curvature flows, such as mean curvature flow and Willmore flow, have been more extensively studied. One method that has been well-developed is the parametric finite element method (FEM), first introduced by Dziuk in \cite{dziuk1990algorithm} for approximating surface evolution under curvature flows and, subsequently, has been extensively used to approximate various geometric evolution equations \cite{dziuk2013finite, bonito2010parametric, bansch2005finite}. Later, the idea of artificial tangential velocity for evolving surfaces was introduced by Barrett, Garcke, and N\"urnberg in their seminal works \cite{barrett2007parametric, barrett2008parametric}. The artificial tangential velocity drives the mesh points to move tangentially on the surface, which does not alter the shape of the evolving surface but improves the quality of the surface mesh during evolution; see also \cite{duan2024new, hu2022evolving, bao2021structure, bao2024structure, bai2024convergence, elliott2017approximations}.

 
The convergence of Dziuk's scheme for curve shortening flow has been established in \cite{deckelnick1995approximation, li2020convergence, ye2021convergence}. 
A parametric FEM based on DeTurck flow techniques was developed by Elliott and Fritz in \cite{elliott2017approximations}, and its convergence for curve shortening flow was proved. 
A convergent numerical scheme for elastic flow was proposed in \cite{bartels2013simple}. 
An error estimate for planar curve evolution with triple junctions was established in \cite{pozzi2021motion}.
Moreover, convergence for curve evolution coupled to a reaction-diffusion equation was established in \cite{pozzi2017curve}, and error analysis for curve shortening flow coupled with lateral diffusion was presented in \cite{barrett2017numerical}.
Furthermore, the convergence of parametric FEMs for the mean curvature flow and the Willmore flow of closed surfaces in three-dimensional space was first established by Kov\'acs, Li, and Lubich in \cite{kovacs2019convergent, kovacs2021convergent}, based on a novel reformulation of these geometric flows involving the evolution equations for the normal vector and the mean curvature. 
Stability and error analysis for advection-diffusion equations on moving surfaces was conducted in \cite{deckelnick2018stability}, and convergence analysis for the interaction between mean curvature flow and diffusion on closed surfaces was established in \cite{elliott2022numerical}.

The Ricci flow, as an intrinsic curvature flow, presents significant challenges for numerical discretization compared to the extrinsic geometric flows studied in the literature. 
One major difficulty lies in the discretization of the Riemannian metric, and computing the Gaussian curvature directly from the metric.  This involves a nonlinear second-order differential operator and is particularly challenging in a spatially discrete setting { since, when applied to piecewise smooth metrics, this differential operator must be interpreted in the sense of distributions~\cite{berchenko2024finite}}.
A finite element discretization of the Ricci-DeTurck flow has been studied in \cite{fritz2015numerical}. In this method, the Riemannian metric \( g \) on an \( n \)-dimensional embedded manifold \( \Gamma \) is extended to a Riemannian metric \( G \) on a tubular neighborhood \( \Gamma_{\delta} \subset \mathbb{R}^{n+1} \). Consequently, the Ricci-DeTurck flow can be reformulated in the Euclidean coordinates of the ambient space \( \mathbb{R}^{n+1} \), and then discretized using the surface finite element method. 
{Although various numerical simulations are reported in \cite{fritz2015numerical}, stability and error analyses for this method remain open problems. } 
{One obstacle to the numerical analysis of Ricci flow is the lack of a weak formulation that is both amenable to discretization and capable of establishing rigorous stability and error estimates.
This differs from extrinsic curvature flows; for instance, a suitable weak formulation for mean curvature flow was already proposed by Dziuk in 1990 \cite{dziuk1990algorithm}, and the numerical analysis of this formulation has since been investigated extensively \cite{deckelnick1995approximation, li2020convergence, ye2021convergence}.
Another challenge arises from the intrinsic nature of Ricci flow: both integrals and differential operators are defined with respect to the evolving metric, which is itself the unknown. 
As a result, one must approximate metric-dependent integration and differentiation in an intrinsic way and, moreover, charactrize the errors induced by the approximation of the evolving metric. This again differs from the extrinsic setting, where the relevant integrals and differential operators are computed with respect to an evolving surface in $\mathbb{R}^3$, and the corresponding error analysis is by now well developed \cite{kovacs2017convergence,kovacs2019convergent,kovacs2021convergent}.
} 

Our construction is based on Regge finite elements, a recently developed family of finite elements for discretizing symmetric $(0,2)$-tensor fields on simplicial triangulations \cite{regge1961general, christiansen2011linearization, li2018regge, christiansen2023extended, christiansen2023finite}.
In the lowest-order case, the Regge finite element discretizes the metric tensor through edge lengths, allowing curvature to be approximated by angle deficits. This was further improved by Gawlik in \cite{gawlik2020high}, developing a high-order Gaussian curvature approximation based on high-order Regge elements. More recently, in \cite{berchenko2024finite}, this approach has been used to approximate the connection 1-form (Levi-Civita connection), see also \cite{gopalakrishnan2023analysis}.
Notably, in \cite{gawlik2019finite}, Gawlik applied this approach to the Ricci flow and proposed a finite element discretization of the Ricci flow of arbitrary polynomial order. Unlike \cite{chow2003combinatorial, fritz2015numerical}, \cite{gawlik2019finite} reformulates the Ricci flow as a coupled system by treating the curvature $\kappa$ and the metric $g$ as independent variables (see Section \ref{sec:Ric} for details). A notable advantage of this approach is that it eliminates the need to compute the curvature from the metric, which involves a nonlinear second-order differential operator. 
Additionally, the scheme proposed in \cite{gawlik2019finite} preserves several important geometric structures of the Ricci flow at the discrete level, including the Gauss-Bonnet theorem and area conservation (see Theorem \ref{thm:GeoStruc}).

{The main contribution of this paper is to provide a rigorous numerical analysis for the FEM for Ricci flow proposed in \cite{gawlik2019finite}. Specifically, we prove stability and error estimates for the discretization \cite[(9)-(11), Section 2]{gawlik2019finite}, based on the following two key observations. The first key observation is that} the reformulated system can be viewed as a solution-driven metric evolution, where the metric evolves according to the Gaussian curvature, which satisfies a parabolic equation that, in turn, depends on the metric. This allows us to analyze the error by adapting the matrix-vector formulation developed in \cite{kovacs2017convergence, kovacs2019convergent}, which was initially introduced for studying solution-driven surface evolution in extrinsic curvature flow.  
{The second key observation is that}  the evolution equation for the Gaussian curvature in the coupled system (see \eqref{eq:Ref-Ricci-b}) exhibits a parabolic structure. This enables us to rigorously prove the convergence of the finite element discretization for arbitrary polynomial orders.
Specifically, if we use the Regge element of degree \( r \ge 0 \) to discretize the Riemannian metric, and the Lagrange element of degree \( q \ge 1 \) to discretize the Gaussian curvature, we have the following error estimate: 
\[
\|g_h(t) -  g^{(-l)}(t) \|_{L^p(M_h)} + \|\kappa_h(t) -  \kappa^{(-l)}(t) \|_{L^2(M_h)} \lesssim (\ln(\frac{1}{h}))^{\bar q} h^{q+1} + \ln(\frac{1}{h}) h^{r+1}, \quad t \in [0,T], 
\]
where $g_h -  g^{(-l)}$ and $\kappa_h -  \kappa^{(-l)}$ denote the errors of metric and curvature respectively, with $g^{(-l)}$ and $\kappa^{(-l)}$ being the inverse lifts of $g$ and $\kappa$ onto the triangulated manifold $M_h$.
Here, $h$ is the mesh size, $p > 2$,  and  $\bar{q} = 1$ when $q = 1$, and $\bar{q} = 0$ otherwise. 
In addition, we can also establish the following optimal $L^2$ error estimate when $r \geq 1$ and $q \geq 1$:
\[
\| g_h(t) -  g^{(-l)}(t) \|_{L^2(M_h)} + \| \kappa_h(t) -  \kappa^{(-l)}(t) \|_{L^2(M_h)} \lesssim h^{q+1} + h^{r+1}, \quad t \in [0,T].
\]
See also the discussions in Remark \ref{rmk:L2-bound}, \ref{rmk:L2-stab-e_g}, \ref{rmk:L2-stab-e_k}. 
These error estimates further indicate that using this coupled system approach can simultaneously provide high-order approximations for both the Gaussian curvature and the Riemannian metric.

{ The intrinsic nature of the Ricci flow means that it evolves a Riemannian metric without reference to any embedding. To visualize the solution, we compute an isometric embedding of the evolving metric into Euclidean space. In Section~\ref{sec:embedding}, we propose a new continuous PDE for the embedding velocity and discretize it using a parametric FEM. This approach enables efficient simulation and visualization of Ricci flow solutions and can be viewed as an optional post-processing step.
}

The rest of this article is organized as follows. In Section \ref{sec:Ric}, we review the evolution equations for the Gaussian curvature of a metric evolving under the Ricci flow and formulate the system of equations to be discretized subsequently. In Section \ref{sec:FEM}, we propose the semi-discretization in space using Regge finite elements and Lagrange finite elements, and present the structure-preserving properties and the main theorem on the convergence of the numerical discretization.
The stability of the numerical discretization is proved in Section \ref{sec:stab}, and error estimates are provided in Section \ref{sec:err}. 
{A numerical embedding scheme for visualizing the numerical solutions is discussed in Section \ref{sec:embedding}.} 
Extensive numerical experiments are presented in Section \ref{sec:numerical} to demonstrate the convergence of the proposed method as well as the simulation of the Ricci flow. {Additional details of the results used in the stability analysis are provided in Appendix \ref{sec:Ritz} and \ref{sec:P-Stab}. }

\section{Evolution equation for Ricci flow}\label{sec:Ric}

\subsection{Basic notations}
Let $\Man$ be a smooth, oriented, two-dimensional manifold without boundary, embedded in $\mathbb{R}^3$. 
The tangent and cotangent bundles on $\Man$ are denoted by $T\Man$ and $T^*\Man$, respectively. 
Let $g$ be a smooth Riemannian metric on $\Man$. We use $\omega_g$ to denote the volume form on $\Man$ induced by $g$, and $\nabla_g$ to denote the Levi-Civita connection associated with $g$.
For a $(p,q)-$tensor $\sigma$, we use $\Tr_g$ to denote the contraction along the first two indices, using $g$ to raise or lower indices as needed. We denote $\div_g \sigma = \Tr_g \nabla_g \sigma $ and $\Delta_g \sigma = \div_g\nabla_g \sigma$. 
The pointwise inner product of two tensors $\sigma$ and $\rho$ with respect to the metric $g$ is denoted by $\langle \sigma, \rho \rangle_{g}$. For tensor fields $\sigma,\rho$ and scalar functions $u,v$ on $\Man$, we denote 
\[
\hspace{-2.2pt}
(u,v)_{g,\Man} = \int_{\Man} uv \omega_{g}, \quad 
(\nabla_g u, \nabla_g v)_{g, \Man} = \int_{\Man} \langle \nabla_g u, \nabla_g v \rangle_{g} \omega_g, \quad  
(\sigma, \rho)_{g, \Man} = \int_{\Man} \langle \sigma, \rho \rangle_{g} \omega_g.
\]
Let $\delta_{\mathbb{R}^3}$ denote the Euclidean metric on $\mathbb{R}^3$, which induces a metric on $\Man$ through the pullback under the inclusion map $i_{\Man}: \Man \hookrightarrow \mathbb{R}^3$, denoted by $i_{\Man}^* \delta_{\mathbb{R}^3}$, and is referred to as the induced Euclidean metric on $\Man$. We denote $\nabla$ and $\mathrm{d}x$ the Levi-Civita connection and the volume form associated with the induced Euclidean metric $i_{\Man}^{*}\delta_{\mathbb{R}^3}$. 
For tensor fields $\sigma,\rho$ and scalar functions $u,v$ on $\Man$, we denote 
\[
\hspace{-4.1pt}
(u,v)_{\Man} = \int_{\Man} uv \mathrm{d}x, \quad 
(\nabla u, \nabla v)_{\Man} = \int_{\Man} \langle \nabla u, \nabla v \rangle_{i_{\Man}^{*}\delta_{\mathbb{R}^3}} \mathrm{d}x, \quad  
(\sigma, \rho)_{\Man} = \int_{\Man} \langle \sigma, \rho \rangle_{i_{\Man}^{*}\delta_{\mathbb{R}^3}} \, \mathrm{d}x.
\]
We use $W^{k,p}(g, \Man)$ to denote the Sobolev space on $\Man$ associated with the metric $g$, and denote $L^{p}(g, \Man) = W^{0,p}(g, \Man)$ and $H^k(g, \Man) = W^{k,2}(g, \Man)$. The Sobolev space with respect to the metric $i_{\Man}^{*}\delta_{\mathbb{R}^3}$ is denoted by $W^{k,p}(\Man)$, $L^p(\Man) = W^{0,p}(\Man)$ and $H^k(\Man) = W^{k,2}(\Man)$.
We also use $L^p(0,T; W^{k,p}(\Man))$ to denote the Sobolev space with $L^p$ regularity in time and $W^{k,p}$ regularity in space.


\subsection{The system of equations used for discretization}
On a two-dimensional manifold $\Man$, the Ricci flow evolves a time-dependent metric $g(t)$ according to
\begin{equation}\label{eq:Ricci-flow}
    \partial_t g(t) = 2(\bar \kappa - \kappa(t)) g(t),
\end{equation}
where $\kappa(t) = \kappa(g(t))$ is the Gaussian curvature of $g(t)$, and \(\bar \kappa\) is a constant that can be set to zero or chosen as the average of initial curvature, given by  
\[
\bar \kappa = \frac{\int_{\Man} \kappa(0) \omega_{g(0)}}{\int_{\Man} \omega_{g(0)}}.
\]  
In such a case equation \eqref{eq:Ricci-flow} is referred to as the normalized Ricci flow.

The Gaussian curvature \(\kappa(t) = \kappa(g(t))\) on the right-hand side of \eqref{eq:Ricci-flow} is known to satisfy the following evolution equation; see, for instance, \cite[Proposition 2.1]{berchenko2024finite}, \cite[Proposition 2]{gawlik2019finite}, and the discussion in \cite[Section 2]{gawlik2019finite}.
\begin{lemma}[Evolution equation of curvature]\label{lm:evo-Gauss-curv} 
For a time-dependent metric $g(t)$ evolving under the Ricci flow \eqref{eq:Ricci-flow}, its Gaussian curvature  $\kappa(t) = \kappa(g(t))$ satisfies  
\begin{equation}\label{eq:evo-Gauss-curv} 
\frac{\mathrm{d}}{\mathrm{d} t} (\kappa(t), v)_{g(t), \Man} + 
(\nabla_{g(t)} \kappa(t), \nabla_{g(t)} v)_{g(t), \Man} = 0, \quad \forall v \in H^1(\Man), \partial_t v = 0. 
\end{equation} 
\end{lemma}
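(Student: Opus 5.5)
The plan is to reduce the statement to the classical pointwise evolution equation for the Gaussian curvature under a conformal change of metric, and then to integrate against a time-independent test function.

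\emph{Step 1: the first variation of $\kappa$.} Write \eqref{eq:Ricci-flow} as $\partial_t g = 2f g$ with $f = \bar\kappa - \kappa$. Locally $g(t) = e^{2u(t)} g(0)$ with $\partial_t u = f$. The conformal transformation law in two dimensions gives
\[
\kappa(e^{2u}g_0) = e^{-2u}\bigl(\kappa(g_0) - \Delta_{g_0} u\bigr).
\]
Differentiating in $t$, and using $\Delta_{e^{2u}g_0} = e^{-2u}\Delta_{g_0}$ (conformal invariance of the Laplacian in dimension two), yields
\[
\partial_t \kappa = -2f\kappa - \Delta_{g} f.
\]
Substituting $f = \bar\kappa - \kappa$, and noting that $\bar\kappa$ is constant so $\Delta_g \bar\kappa = 0$, gives
\[
\partial_t \kappa = \Delta_g \kappa - 2(\bar\kappa - \kappa)\kappa.
\]

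\emph{Step 2: the first variation of the volume form.} Since $\partial_t g = 2fg$, the Jacobi formula $\partial_t \sqrt{\det g} = \tfrac12 \Tr_g(\partial_t g)\sqrt{\det g}$ gives $\partial_t \omega_{g} = 2f\,\omega_g = 2(\bar\kappa - \kappa)\,\omega_g$.

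\emph{Step 3: assembly.} For $v \in H^1(\Man)$ independent of $t$, the product rule gives
\[
\frac{\mathrm d}{\mathrm dt}(\kappa, v)_{g,\Man} = \int_\Man (\partial_t\kappa)\, v\,\omega_g + \int_\Man \kappa v\,\partial_t\omega_g .
\]
Inserting Steps 1 and 2, the reaction terms $-2(\bar\kappa-\kappa)\kappa v$ and $+2(\bar\kappa-\kappa)\kappa v$ cancel exactly. What remains is $\int_\Man (\Delta_g\kappa)\,v\,\omega_g$. Because $\Man$ is closed, Green's formula turns this into $-(\nabla_g\kappa,\nabla_g v)_{g,\Man}$, which is \eqref{eq:evo-Gauss-curv}.

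This holds first for smooth $v$. It extends to $v \in H^1$ by density, since $g(t)$ is smooth, so both sides are continuous in $v$ in the $H^1$ norm.

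\emph{Main obstacle.} The only real care is in Step 1: getting the conformal variation formula with the correct signs and the factor $2$. I would verify it directly from the transformation law above, or alternatively derive it from the general identity $\partial_t R = -\Delta \Tr h + \div\div h - \langle h, \mathrm{Ric}\rangle$ with $h = 2fg$, $R = 2\kappa$, and $\mathrm{Ric} = \kappa g$. In two dimensions this gives $\partial_t(2\kappa) = -4\Delta f + 2\Delta f - 4f\kappa$, consistent with Step 1.

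A secondary point is justifying differentiation under the integral sign. This is routine given the assumed smoothness of the solution on $[0,T]$.
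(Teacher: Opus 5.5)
Your argument is correct. The two-dimensional conformal law with $u(t)=\int_0^t f\,\mathrm{d}s$ (this is globally defined, so ``locally'' is not needed) gives $\partial_t\kappa=\Delta_g\kappa+2\kappa(\kappa-\bar\kappa)$. The reaction term is then cancelled exactly by $\partial_t\omega_g=2(\bar\kappa-\kappa)\,\omega_g$, leaving $\int_\Man(\Delta_g\kappa)\,v\,\omega_g$.

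The paper gives no proof of its own and defers to \cite[Proposition~2.1]{berchenko2024finite} and \cite[Proposition~2]{gawlik2019finite}. As far as I can tell, those sources work from the first variation of the densitized curvature under an arbitrary metric velocity, which is essentially the alternative you mention at the end:
\[
\partial_t(\kappa\,\omega_g)=\tfrac12\,\div_g\div_g\bigl(\partial_t g-g\,\Tr_g\partial_t g\bigr)\,\omega_g .
\]
Inserting $\partial_t g=2(\bar\kappa-\kappa)g$ gives $\partial_t g-g\,\Tr_g\partial_t g=2(\kappa-\bar\kappa)g$. Hence $\partial_t(\kappa\,\omega_g)=(\Delta_g\kappa)\,\omega_g$ directly, so no reaction term ever appears and no conformal factor is needed.

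Your route is more elementary and self-contained. However, it relies on the flow staying in a fixed conformal class and passes through the strong (pointwise) form of the equation. The variational route applies to any $\partial_t g$. It is also the identity that has a distributional counterpart for piecewise smooth Regge metrics, which is the setting of the cited works.

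One small remark: the density step is unnecessary. Since $g$ and $\kappa$ are smooth, you can differentiate under the integral and apply Green's formula directly for $v\in H^1(\Man)$. As you phrase it, ``continuity of both sides in $v$'' would also require noting that the $t$-derivatives converge uniformly in $t$.
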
 

Based on \eqref{eq:evo-Gauss-curv} and \eqref{eq:Ricci-flow}, we have reformulated the Ricci flow as the following coupled system: Find a time-dependent metric $g$ and a scalar function $\kappa$ on $\Man \times [0,T]$ satisfying 
\begin{subequations}\label{eq:Ref-Ricci}  
\begin{align}
\label{eq:Ref-Ricci-a} 
\frac{\partial }{\partial t} g(t) + 2( \kappa(t) - \bar \kappa) g(t) &= 0, \\ 
\label{eq:Ref-Ricci-b} 
\frac{\mathrm{d}}{\mathrm{d} t} ( \kappa(t), v )_{g(t), \mathcal{M}} + 
( \nabla_{g(t)} \kappa(t) , \nabla_{g(t)} v )_{g(t), \mathcal{M}} &= 0, \quad \forall v \in  H^1(\Man), \partial_t v = 0.
\end{align}
\end{subequations}
In this coupled system, the Riemannian metric evolves according to its Gaussian curvature, while the evolution equation for the Gaussian curvature exhibits a parabolic structure and, in turn, depends on the metric.
This system is complemented with the initial data $g(0)$ and $\kappa(0)$. 
The numerical discretization will be based on \eqref{eq:Ref-Ricci}. 

Throughout this paper, we denote by $C$ and $h_{0}$ two generic positive constants which are different at different occurrences, possibly depending on the exact solution and $T$, but are independent of the mesh size $h$ and $t\in[0,T]$. The notation $X \lesssim Y$ means $X \leq C Y$ for some constant $C$, and $X \eqsim Y$ means $X \lesssim Y$ and $Y \lesssim X$.

\section{Finite element discretization}\label{sec:FEM}

Let $M_h$ be a piecewise flat, quasi-uniform triangular surface whose vertices lie on $\Man$ , with mesh size $h$ and approximating $\Man$.
We assume that $M_h \subset D_{\varepsilon}(\mathcal{M}) = \{ p \in \mathbb{R}^3 \mid {\rm dist}(p, \mathcal{M}) \leq \varepsilon \}$ for some sufficiently small $\varepsilon > 0$, ensuring the existence of a bijective mapping $a: M_h \rightarrow \Man$ satisfying  
\[
    p - a(p) = \pm \lvert p - a(p) \rvert n(a(p)), \quad \forall p \in M_h,
\]
where $n: \Man \rightarrow \mathbb{R}^3$ denotes the outer unit normal vector of $\Man$.

We denote by $\mathcal{T}_h$ the set of triangles in $M_h$ and by $\mathcal{E}_h$ the set of its edges.
Any function $f$ on $M_h$ can be lifted to a function $f^{l}$ on $\Man$, defined by $f^{l} = f \circ a^{-1}$. 
The inverse lift of a function $\phi$ on $\Man$ can be expressed as $\phi^{(-l)} = \phi \circ a$ on $M_h$.
Any $(0,2)$ tensor field $\sigma$ on $\Man$ can be pulled back to $M_h$, defined by $\sigma^{(-l)} = a^{*} \sigma$, where $a^{*}$ denotes the pullback under the mapping $a:  M_h \rightarrow \Man$.

On a triangle $K \subset M_h$, there exists a coordinate chart such that the pair $(\partial_1, \partial_2)$ forms a constant orthonormal frame of the tangent bundle $TK$ with respect to the induced Euclidean metric $i_{K}^* \delta_{\mathbb{R}^3}$. 
Let $S_2^0(K)$ denote the space of all symmetric $(0,2)$-tensor fields on $K$. For any $\sigma \in S_2^0(K)$, it can be represented as a $2 \times 2$ symmetric matrix-valued function on $K$, defined by $(\sigma_{ij})_{1 \leq i,j \leq 2} = (\sigma(\partial_i, \partial_j))_{1 \leq i,j \leq 2}$, and this matrix representation is also referred to as $\sigma$. We also use $\sigma^{-1}$ and $\det(\sigma)$ to denote its inverse matrix and determinant.
We say that $\sigma \in H^1S_2^0(K)$ if each component of its matrix representation belongs to $H^1(K)$. Similarly, we say that $\sigma \in P_rS_2^0(K)$ if each component of its matrix representation is a polynomial of degree at most $r$.
Let 
\[
\Sigma = \left\{ \sigma \in  \prod_{K \in \mathcal{T}_h} H^1S_2^0(K) \mid i_{K_1,e}^*(\sigma|_{K_1}) = i_{K_2,e}^*(\sigma|_{K_2}), \forall e = K_1 \cap K_2 \in \mathcal{E}_h \right\},
\]
where $i_{K_j,e}^*$ denotes the pullback under the inclusion $i_{K_j,{e}}: e \hookrightarrow K_j$. 
In particular, we define the set  
\[  
\Sigma_{+} = \{ \sigma \in \Sigma \mid \sigma(x) \succ 0, \forall x \in \mathring{K}, K \in \mathcal{T}_h \},  
\]  
which consists of all \(\sigma \in \Sigma\) that are positive definite and serves as the set of metrics on \(M_h\).
For a tensor field \( g \in \Sigma_{+} \), its tangential-tangential components are single-valued along each edge \( e \in \mathcal{E}_h \), allowing us to define the volume form and the unit tangent vector with respect to \( g \) along \( e \) as  
\[
\mathrm{d}s_g = \sqrt{g(\tau , \tau) } \, \mathrm{d}s, \quad \text{and} \quad \tau_g = \frac{\tau}{\sqrt{g(\tau , \tau)}},
\]
where \( \mathrm{d}s \) and \( \tau \) are the volume form and unit tangent vector on \( e \) with respect to the induced Euclidean metric. Moreover, we denote $\omega_g = \sqrt{\det g} \mathrm{d}x$ the volume form on $M_h$ with respect to $g \in \Sigma_+$. 
For scalar functions $u,v$ on $M_h$, we denote 
\[ \langle u, v \rangle_{g, e} = \int_e uv \, \mathrm{d}s_g, \quad 
(u,v)_{g,M_h} = \int_{M_h} u v \sqrt{\det g}\mathrm{d}x 
= \sum_{K \in \mathcal{T}_h} \int_K u v \sqrt{\det g}\mathrm{d}x, \]
and 
\[ 
\hspace{-3.5pt}
(\nabla_g u, \nabla_g v)_{g,M_h} = 
\int_{M_h} g^{-1}\nabla u \cdot \nabla v \sqrt{\det g}\mathrm{d}x,
\quad 
(\sigma, \rho)_{g, M_h} = 
\int_{M_h} \Tr\left(g^{-1} \sigma g^{-1} \rho\right) \sqrt{\det g}\mathrm{d}x,
\]
where $\nabla u = (\partial_1 u, \partial_2 u)^{\top} \in \mathbb{R}^2$ and \( \sigma, \rho \in \Sigma \), with \( g \in \Sigma_{+} \).   
If we use the induced Euclidean metric $i_{M_h}^*\delta_{\mathbb{R}^3}$ on $M_h$, we denote
\[
(u,v)_{M_h} = \int_{M_h} u v \mathrm{d}x, \quad 
(\nabla u, \nabla v)_{M_h} = \int_{M_h} \nabla u \cdot \nabla v \mathrm{d}x, \quad 
(\sigma, \rho)_{M_h} = \int_{M_h} \Tr\left(\sigma\rho\right)\mathrm{d}x.
\]
We denote by $L^p(g, M_h)$, $1 \leq p \leq \infty$, and $H^1(g, M_h)$ the Sobolev spaces on $M_h$ associated with the metric $g \in \Sigma_+$. The Sobolev spaces with respect to the 
induced Euclidean metric $i_{M_h}^*\delta_{\mathbb{R}^3}$ are denoted as $L^p(M_h)$ and $H^1(M_h)$.

\subsection{Finite element space}  
We define the Lagrange element space of degree \( q \geq 1 \) as  
\[
V_h = \{ v \in C(M_h) \mid v|_K \in P_q(K), \, \forall K \in \mathcal{T}_h \},
\]
where \( P_q(K) \) denotes the space of polynomials on \( K \) of degree at most \( q \).  
The Regge element space of degree \( r \geq 0 \) \cite{li2018regge} is given by  
\[
\Sigma_h = \{ \sigma \in \Sigma \mid \sigma|_K \in P_r S_2^0(K), \, \forall K \in \mathcal{T}_h \}.
\]
Additionally, we define \(\Sigma_{h,+} = \Sigma_h \cap \Sigma_{+}\), 
which represents the set of all \( \sigma \in \Sigma_h \) that are positive definite.

For any $\sigma \in \Sigma$ and $g \in \Sigma_+$, we define  a metric-dependent projection $P_g: \Sigma \rightarrow \Sigma_h$ by requiring: 
\begin{subequations}\label{eq:Regge-interpolation}  
    \begin{align}
    \label{eq:Regge-interpolation-a} 
    \langle P_{g} \sigma(\tau_{g},\tau_{g}), v \rangle_{g, e} &= 
    \langle  \sigma(\tau_{g},\tau_{g}), v \rangle_{g, e}, 
    \quad \forall v \in P_r(e), e \in \mathcal{E}_h,  \\
    \label{eq:Regge-interpolation-b} 
    (P_{g} \sigma, \rho)_{g, K}& = (\sigma, \rho)_{g, K}, 
    \quad\quad\quad\quad \forall  \rho \in P_{r-1}S^0_2(K), K \in \mathcal{T}_h,
    \end{align}
\end{subequations}
where $P_r(e)$ denotes the space of polynomials on $e$ of degree at most $r$.
{If we use the induced Euclidean metric \(i_{M_h}^*\delta_{\mathbb{R}^3}\) in \eqref{eq:Regge-interpolation}, the projection defined in \eqref{eq:Regge-interpolation} coincides with the canonical Regge interpolation studied in \cite[Section 2.3.2]{li2018regge}, which we simply denote by \(P_h\).} 
Additionally, we define a metric-dependent \( L^2 \) projection \( \Pi_g: \Sigma \rightarrow \Sigma_h \) as  
\begin{equation}\label{eq:L2-projection} 
( \Pi_g \sigma, \rho )_{g, M_h} = (\sigma, \rho )_{g, M_h}, \quad \forall \rho \in \Sigma_h. 
\end{equation}

\subsection{Finite element spatial semi-discretization}
We propose the following two finite element spatial semi-discretization schemes for the coupled system \eqref{eq:Ref-Ricci}. The first scheme is based on the metric-dependent projection operator defined in \eqref{eq:Regge-interpolation}:  
Find  \( g_h(t) \in \Sigma_h \) and \( \kappa_h(t) \in V_h \) such that 
\begin{subequations}\label{eq:num-Ric}  
\begin{align}
\label{eq:num-Ric-g} 
\partial_t g_h(t) + 2P_{g_h(t)}( ( \kappa_h(t) - \bar \kappa_h) g_h(t)) &= 0,
\\ 
\label{eq:num-Ric-kappa} 
\frac{\mathrm{d}}{\mathrm{d} t} ( \kappa_h(t), \phi_h )_{g_h(t), M_h} + 
    ( \nabla_{g_h} \kappa_h(t), \nabla_{g_h} \phi_h )_{g_h(t), M_h} &= 0,
    \quad \forall \phi_h \in V_h, \partial_t \phi_h = 0.
\end{align}
\end{subequations}
The second scheme utilizes the \( L^2 \) projection defined in \eqref{eq:L2-projection}: 
Find  \( g_h(t) \in \Sigma_h \) and \( \kappa_h(t) \in V_h \) such that
\begin{subequations}\label{eq:num-Ric-L2}  
    \begin{align}
    \label{eq:num-Ric-L2-g} 
    \partial_t g_h(t) + 2\Pi_{g_h(t)}( ( \kappa_h(t) - \bar \kappa_h) g_h(t)) &= 0,
    \\ 
    \label{eq:num-Ric-L2-kappa} 
    \frac{\mathrm{d}}{\mathrm{d} t} ( \kappa_h(t), \phi_h )_{g_h(t), M_h} + 
        ( \nabla_{g_h} \kappa_h(t), \nabla_{g_h} \phi_h )_{g_h(t), M_h} &= 0,
        \quad \forall \phi_h \in V_h, \partial_t \phi_h = 0.
    \end{align}
    \end{subequations}
The initial values of $g_h$ and $\kappa_h$ in \eqref{eq:num-Ric} and \eqref{eq:num-Ric-L2} are taken as the canonical Regge interpolation $P_h g^{{(-l)}}(0) \in \Sigma_h$ and the Ritz projection $R_h \kappa(0) \in V_h$ which is defined in \eqref{eq:Ritz} below. 
In the following context, we consider the case of the normalized Ricci flow and define the constant \(\bar \kappa_h\) as  
\[
\bar \kappa_h = \frac{\int_{M_h} \kappa_h(0) \omega_{g_h(0)}}{ \int_{M_h} \omega_{g_h(0)}}.
\]  
The subsequent numerical analysis remains valid for the case \(\bar \kappa = \bar \kappa_h = 0\).

The main theoretical result of this paper is the convergence proof for the schemes \eqref{eq:num-Ric} and \eqref{eq:num-Ric-L2}, summarized in the following two theorems.
\begin{theorem}\label{thm:main} 
    Consider the spatial semi-discretization scheme \eqref{eq:num-Ric} for the coupled Ricci flow equations \eqref{eq:Ref-Ricci}, using the Regge element space of degree $r \geq 0$ and the Lagrange element space of degree $q \geq 1$. Suppose that the Ricci flow equation admits an exact solution $(g, \kappa)$ that is sufficiently smooth on the time interval $[0, T]$, and that the symmetric $(0,2)$-tensor field $g(t)$ is uniformly positive definite on the time interval $[0, T]$.
    Then, there exists a constant $h_0 > 0$  such that the finite element solutions given by \eqref{eq:num-Ric} satisfy the following error bound for $0<h\leq h_0${\rm:}
\[
    \| g_h(t) -  g^{(-l)}(t) \|_{L^p(M_h)} + \| \kappa_h(t) - \kappa^{(-l)}(t) \|_{L^2(M_h)} \lesssim (\ln(\frac{1}{h}))^{\bar q} h^{q+1} + \ln(\frac{1}{h})h^{r+1}, \quad t \in [0,T], 
\]
where  $p > 2$  and  $\bar{q} = {1}$ when $q = 1$, and $\bar{q} = 0$ otherwise. 
\end{theorem}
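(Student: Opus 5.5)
We will prove Theorem \ref{thm:main} with the standard consistency--stability argument for solution-driven evolutions, in the matrix-vector style of \cite{kovacs2017convergence, kovacs2019convergent}. The first step is to fix reference approximations of the exact solution on $M_h$: $g_h^* := P_h g^{(-l)}(t) \in \Sigma_h$ (the canonical Regge interpolant) and $\kappa_h^* := R_h \kappa^{(-l)}(t) \in V_h$ (the metric-dependent Ritz projection \eqref{eq:Ritz}, taken with respect to $g_h^*$ or $g^{(-l)}$). Inserting $(g_h^*,\kappa_h^*)$ into \eqref{eq:num-Ric} leaves defects $d_g$ and $d_\kappa$. We will bound these using:
\begin{itemize}
\item the approximation properties of $P_h$ and $P_{g}$, giving $O(h^{r+1})$;
\item the Ritz-projection estimates from the appendix, giving $O(h^{q+1})$ in $L^2$ and its time derivative;
\item geometric perturbation errors from replacing $\Man$ by $M_h$. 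These are $O(h^2)$ and below the target when combined with the exact-metric pullback, since the intrinsic metric on $M_h$ is $g^{(-l)}$ itself.
\end{itemize}
The defects are measured in the dual norm $\|d_\kappa\|_{H^{-1}_h}$ and in $\|d_g\|_{L^p}$.

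The second step derives error equations for $e_g = g_h - g_h^*$ and $e_\kappa = \kappa_h - \kappa_h^*$. Subtracting the defect equations gives:
\begin{itemize}
\item for the metric, an ODE $\partial_t e_g = -2[P_{g_h}((\kappa_h-\bar\kappa_h)g_h) - P_{g_h^*}((\kappa_h^*-\bar\kappa_h)g_h^*)] - d_g$;
\item for the curvature, a parabolic equation whose mass and stiffness forms depend on $g_h$ versus $g_h^*$.
\end{itemize}
We will use a bootstrap assumption $\|e_g\|_{L^\infty}\le h^{\epsilon}$ and $\|e_\kappa\|_{W^{1,\infty}}\le 1$ on a maximal interval $[0,t^*]$. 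This keeps $g_h$ uniformly positive definite and all $g_h$-dependent norms equivalent to Euclidean ones.

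For stability, we test the $\kappa$ error equation with $\partial_t e_\kappa$ (or with $e_\kappa$) and handle the time derivative of the mass matrix through $\partial_t g_h$, which is bounded by the bootstrap. Differences of the bilinear forms are estimated in the form
\[
|(\nabla_{g_h}\kappa_h^*,\nabla v)_{g_h}-(\nabla_{g_h^*}\kappa_h^*,\nabla v)_{g_h^*}|\lesssim \|e_g\|_{L^p}\|\nabla\kappa_h^*\|_{L^\infty}\|\nabla v\|_{L^{p'}}.
\]
Here a term $\|\nabla v\|_{L^{p'}}$ with $p'<2$ appears.

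For the metric ODE, we bound $\|e_g\|_{L^p}$ by Gronwall using the $L^p$-stability of $P_g$ in $g$ (Appendix \ref{sec:P-Stab}) and its Lipschitz dependence on $g$. This requires $\|e_\kappa\|_{L^p}$, and in 2D we control it by the inverse/Sobolev inequality $\|e_\kappa\|_{L^p}\lesssim \sqrt{\ln(1/h)}\,\|e_\kappa\|_{H^1}$ (or a discrete Sobolev inequality). This is the source of the logarithmic factors.

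Combining the two estimates gives a Gronwall inequality for $\|e_\kappa\|_{H^1}^2+\|e_g\|_{L^p}^2$. It yields $\|e_g\|_{L^p}+\|e_\kappa\|_{L^2}\lesssim (\ln\frac1h)^{\bar q}h^{q+1}+\ln\frac1h\,h^{r+1}$. Inverse inequalities then close the bootstrap for small $h$, so $t^*=T$. The theorem follows from the triangle inequality with the interpolation errors of $P_h$ and $R_h$; for $q=1$ the logarithm comes from the $L^\infty$ Ritz estimate.

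The main obstacle is the coupling step: the nonlinear dependence of the stiffness term on $e_g$, which is only controlled in $L^p$ and not in $H^1$ (Regge fields are discontinuous), must be absorbed by the $H^1$ norm of $e_\kappa$ without loss of order. We will first try to test with $e_\kappa$ and use $\|e_g\|_{L^p}\|\nabla\kappa_h^*\|_{L^\infty}\|\nabla e_\kappa\|_{L^{2}}$ plus Young's inequality. If the estimate for $\partial_t e_\kappa$ forces an $H^1$-type bound instead, we fall back on the $L^p$-$L^{p'}$ splitting with logarithmic discrete Sobolev constants.
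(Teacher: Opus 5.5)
Your architecture matches the paper's: the reference pair $g_h^*=P_hg^{(-l)}$, $\kappa_h^*=R_h\kappa$; an $L^p$ Gronwall argument for the metric ODE based on the stability of $P_g$ and its Lipschitz dependence on $g$; and an energy estimate for $e_\kappa$ obtained by testing with $e_\kappa$ in matrix--vector form. However, your bootstrap hypothesis $\|e_\kappa\|_{W^{1,\infty}}\le 1$ is a genuine gap, because it cannot be closed in precisely the cases the theorem is about. For $e_\kappa$ your argument yields a bound in $L^\infty(0,t;L^2)\cap L^2(0,t;H^1)$ of size $\ln(1/h)h^{r+1}+(\ln(1/h))^{\bar q}h^{q+1}$. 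Inverse estimates then give only $\|e_\kappa\|_{W^{1,\infty}}\lesssim h^{-2}\|e_\kappa\|_{L^2}$. This is of order $h^{-1}\ln(1/h)$ for $r=0$, and of order $\ln(1/h)$ for $q=1$ or $r=1$. Even $\|e_\kappa\|_{L^\infty}\lesssim h^{-1}\|e_\kappa\|_{L^2}\sim\ln(1/h)$ is not small when $r=0$. So ``inverse inequalities close the bootstrap'' holds only for $\min(r,q)\ge 2$. In general you may not assume that $\kappa_h$ is bounded in $L^\infty$, and hence not $\partial_t g_h=-2P_{g_h}((\kappa_h-\bar\kappa_h)g_h)$ either; yet that is exactly what your treatment of the mass-matrix derivative uses. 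Testing with $\partial_t e_\kappa$ does not rescue this. For $r=0$ it gives at best $\|\nabla e_\kappa\|_{L^2}\sim h\ln(1/h)$, which is still $O(\ln(1/h))$ in $W^{1,\infty}$. It also brings in $\dot{\mathbf A}[\mathbf g]$, whose control again needs $\partial_t g_h\in L^\infty$.

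The missing idea is a much weaker bootstrap, $\|e_g\|_{L^\infty(0,t^*;L^p)}+\|e_\kappa\|_{L^\infty(0,t^*;L^2)}\le h^{\frac12+\frac1p}$. It still gives $L^\infty$ bounds for $g_h$, $g_h^{-1}$ and $\sqrt{\det g_h}$, because $h^{-2/p}h^{1/2+1/p}\to0$. For $\kappa_h$ and $\partial_t g_h$ it gives only $L^{\tilde q}$ bounds with $2<\tilde q<\frac{4p}{p-2}$; the bound on $\partial_t g_h$ comes from an $L^2$ bound on $\partial_t e_g$ read off from \eqref{eq:error-g}.

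With this, you estimate $\frac12\mathbf e_\kappa^\top\dot{\mathbf M}[\mathbf g]\mathbf e_\kappa$ and the $\partial_t g_h$-part of $\mathbf e_\kappa^\top(\dot{\mathbf M}[\mathbf g^*]-\dot{\mathbf M}[\mathbf g])\boldsymbol\kappa^*$ by H\"older with exponents $(\frac{2p}{p-2},p,2)$. Then use the embedding $H^1\hookrightarrow L^p$ and Young's inequality to absorb $\varepsilon\|\nabla e_\kappa\|_{L^2}^2$. The $\partial_t e_g$-part uses the same $L^2$ bound. This is why $e_g$ is measured in $L^p$ with $p>2$. Since the final error is $o(h^{1/2+1/p})$ for all $r\ge0$, $q\ge1$, this bootstrap does close.

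The Sobolev constant in that step must be $h$-independent. With your $\|e_\kappa\|_{L^p}\lesssim\sqrt{\ln(1/h)}\,\|e_\kappa\|_{H^1}$, the coupled Gronwall argument acquires a factor $e^{CT\ln(1/h)}=h^{-CT}$, because the $L^2(H^1)$ bound for $e_\kappa$ contains $\int_0^t\|e_g\|_{L^p}^2$. For fixed $p<\infty$ no logarithm is needed there. The logarithms in the theorem instead enter through $\|d_g\|_{L^p}$. The term $P_{g_h^*}((\kappa_h^*-\kappa^{(-l)})g^{(-l)})$ has a non-polynomial argument, for which only $L^\infty$ stability of $P_{g_h^*}$ is available. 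So the $L^\infty$ Ritz estimate \eqref{eq:app-Ritz-Linf} is required, not just the $L^2$ one.
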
 
\begin{theorem}\label{thm:main-L2} 
    Under the conditions of Theorem \ref{thm:main}, when \( r \geq 1 \) and \( q \geq 1 \), for both schemes \eqref{eq:num-Ric} and \eqref{eq:num-Ric-L2}, there exists a constant \( h_0 > 0 \) such that the finite element solutions satisfy the following error bound for \( 0 < h \leq h_0 \)\rm{:}
    \[
        \| g_h(t) -  g^{(-l)}(t) \|_{L^2(M_h)} + \| \kappa_h(t) - \kappa^{(-l)}(t) \|_{L^2(M_h)} \lesssim h^{q+1} + h^{r+1}, \quad t \in [0,T]. 
    \]
\end{theorem}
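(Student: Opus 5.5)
The proof of Theorem \ref{thm:main-L2} will reuse the stability framework built for Theorem \ref{thm:main}, with one change: the error in $g$ is measured in $L^2$ instead of $L^p$, $p>2$. The logarithmic factors in Theorem \ref{thm:main} come from two sources: inverse inequalities and discrete Sobolev embeddings (such as $\|v_h\|_{L^\infty}\lesssim \ln(1/h)^{1/2}\|v_h\|_{H^1}$), and the $L^p$-stability of $P_g$ and of the Ritz projection. When $r\ge1$, the projections are $L^2$-stable and the consistency errors are $O(h^{r+1})$ in $L^2$, so no such factors arise.

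\textbf{Step 1: reference functions and defects.} I take $g_* = P_{g^{(-l)}} g^{(-l)}$ (respectively $\Pi_{g^{(-l)}} g^{(-l)}$ for scheme \eqref{eq:num-Ric-L2}) and $\kappa_* = R_h\kappa^{(-l)}$, a metric-dependent Ritz projection for the $g^{(-l)}$-weighted Laplacian. Substituting $(g_*,\kappa_*)$ into the discrete equations leaves defects $d_g$ and $d_\kappa$. These are controlled using the Regge interpolation estimates of \cite{li2018regge}, the Ritz projection estimates of Appendix \ref{sec:Ritz} including its time derivative, and the geometric error of order $h^{q+1}$ from lifting $M_h\to\Man$. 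The target bounds are $\|d_g\|_{L^2}\lesssim h^{r+1}+h^{q+1}$ and $\|d_\kappa\|_{H^{-1}_h}\lesssim h^{q+1}+h^{r+1}$. For $r\ge1$, both $P_g$ and $\Pi_g$ commute with multiplication of $g$ by constants and are $L^2$-bounded with constants uniform in $g$ near $g^{(-l)}$, so the two schemes can be treated in parallel.

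\textbf{Step 2: error equations and energy estimates.} Set $e_g=g_h-g_*$ and $e_\kappa=\kappa_h-\kappa_*$. The metric equation gives
\[
\partial_t e_g = -2\bigl[P_{g_h}((\kappa_h-\bar\kappa_h)g_h)-P_{g_*}((\kappa_*-\bar\kappa)g_*)\bigr]-d_g .
\]
Using Lipschitz dependence of $P_g$ on $g$ in $L^2$ (Appendix \ref{sec:P-Stab}), this yields
\[
\tfrac{d}{dt}\|e_g\|_{L^2}^2\lesssim \|e_g\|_{L^2}^2+\|e_\kappa\|_{L^2}^2+\|d_g\|_{L^2}^2 .
\]
This requires $\|\kappa_*\|_{L^\infty}$ bounded and an a priori bound $\|\kappa_h\|_{L^\infty}\lesssim1$, which makes the factor $(\kappa_h-\bar\kappa_h)$ harmless. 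For $\kappa$, I test with $e_\kappa$ in the matrix-vector formulation of \cite{kovacs2017convergence,kovacs2019convergent}, viewing $\mathbf M(g)$ and $\mathbf A(g)$ as mass and stiffness matrices depending on $g$. The key facts are the difference estimates $|\mathbf w^\top(\mathbf M(g_h)-\mathbf M(g_*))\mathbf v|\lesssim \|e_g\|_{L^2}\|w_h\|_{L^4}\|v_h\|_{L^4}$ and the analogue for $\mathbf A$ with gradients, which needs $\|\nabla\kappa_*\|_{L^\infty}$ bounded. The $\mathbf A$-difference term is bounded by $\|e_g\|_{L^2}\|\nabla\kappa_*\|_{L^\infty}\|\nabla e_\kappa\|_{L^2}$ and absorbed by Young's inequality. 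The time derivative of $\mathbf M(g_h)$ involves $\partial_t g_h$, which is bounded in $L^\infty$ by the a priori bound. The result is
\[
\tfrac{d}{dt}\|e_\kappa\|^2_{\mathbf M}+c\|\nabla e_\kappa\|^2\lesssim \|e_\kappa\|^2+\|e_g\|_{L^2}^2+\|d_\kappa\|_{H^{-1}_h}^2 .
\]

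\textbf{Step 3: closing the argument.} I add the two inequalities and apply Gronwall on the maximal interval $[0,T_h]$ on which the a priori bounds $\|e_g\|_{L^\infty}\le h^{\epsilon}$ and $\|e_\kappa\|_{W^{1,\infty}}\le h^{\epsilon}$ hold. Inverse inequalities give $\|e_g\|_{L^\infty}\lesssim h^{-1}(h^{q+1}+h^{r+1})\ll h^{\epsilon}$ when $r\ge1$ and $q\ge1$. This is exactly where the hypothesis $r\ge1$ is used, since with $r=0$ the bound gives only $h^{0}$. The bootstrap therefore shows $T_h=T$ for $h\le h_0$. The theorem then follows by the triangle inequality with the interpolation errors $g_*-g^{(-l)}$ and $\kappa_*-\kappa^{(-l)}$.

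\textbf{Main obstacle.} I expect the hardest step to be the $L^2$-Lipschitz estimate $\|(P_{g_h}-P_{g_*})\sigma\|_{L^2}\lesssim\|g_h-g_*\|_{L^2}$ for smooth $\sigma$ with $r\ge1$. The edge moments in \eqref{eq:Regge-interpolation-a} involve traces of $g_h$, and traces are not controlled in $L^2$. I would first handle this by a scaling argument on the reference element: on each element, estimate the edge traces of $e_g$ by $h^{-1/2}\|e_g\|_{L^2(K)}$, then observe that the resulting perturbation of $P_g\sigma$ multiplies $\sigma-P_g\sigma=O(h)$, which cancels the $h^{-1/2}$ loss after summation. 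For the $\kappa$-equation, the analogous issue is the $H^{-1}_h$ bound on $d_\kappa$ without logarithms, which relies on the superconvergent Ritz estimates of Appendix \ref{sec:Ritz}.
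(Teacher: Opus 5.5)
Your architecture matches the paper's: a reference pair, error equations, $L^2$-Lipschitz dependence of the projection on the metric, a matrix--vector energy estimate for $e_\kappa$, Gr\"onwall, and a bootstrap that closes only when $r\ge1$. However, the bootstrap as you formulate it does not close, and there are two concrete gaps.

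\textbf{First gap: the $W^{1,\infty}$ hypothesis on $e_\kappa$.} You include $\|e_\kappa\|_{W^{1,\infty}}\le h^{\epsilon}$ among the a priori bounds. Your energy estimate only returns $\|e_\kappa\|_{L^\infty(L^2)}+\|\nabla e_\kappa\|_{L^2(L^2)}\lesssim h^{q+1}+h^{r+1}$. The inverse inequality then gives only $\|e_\kappa\|_{W^{1,\infty}}\lesssim h^{-2}(h^{q+1}+h^{r+1})=O(h^{\min(q,r)-1})$. This is $O(1)$ when $\min(q,r)=1$, exactly the lowest cases the theorem covers, so the conclusion $T_h=T$ fails as written. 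You never use $\nabla e_\kappa$ in $L^\infty$, so the fix is to bootstrap only $\|e_\kappa\|_{L^\infty}$, which is recovered as $h^{-1}\cdot h^{2}=h$.

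\textbf{Second gap: $\partial_t g_h$ in $L^\infty$ for scheme \eqref{eq:num-Ric-L2}.} You assert that $\partial_t g_h$ is bounded in $L^\infty$ ``by the a priori bound''. Here $\partial_t g_h=-2\Pi_{g_h}((\kappa_h-\bar\kappa_h)g_h)$, so this needs $L^\infty$-stability of the metric-weighted $L^2$ projection $\Pi_{g_h}$. That is not available; the paper stresses that $L^\infty$ stability is only known for $P_{g_h}$.

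The inverse-estimate route, $\|\partial_t e_g\|_{L^\infty}\lesssim h^{-1}\bigl(\|e_g\|_{L^2}+\|e_\kappa\|_{L^2}+\|d_{L^2,g}\|_{L^2}\bigr)$, also fails under your hypothesis. You need $\|e_g\|_{L^\infty}\le h^{\epsilon}$ with $\epsilon<1$ to close, and that yields only $h^{\epsilon-1}$.

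The paper avoids both problems by bootstrapping in $L^2$ at level $h^{1.5}$, see \eqref{eq:bound-e_g-e_k-L2}. The error equation together with \eqref{eq:esti-Pgh-Pgh*-L2} then gives $\|\partial_t e_g\|_{L^2}\lesssim h^{1.5}$, hence $\|\partial_t g_h\|_{L^\infty}\le C$ as in \eqref{eq:bound-p_tg_h-L2}. The requirement $h^{r+1}\ll h^{1.5}$ is where $r\ge1$ enters.

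Your claim that the two schemes ``can be treated in parallel'' hides a further point. The paper proves \eqref{eq:esti-Pgh-Pgh*-L2} by substituting the defect equation for $\Pi_{g_h^*}((\kappa_h^*-\bar\kappa_h)g_h^*)$. This way only $\|\partial_t g_h^*\|_{L^\infty}$ and $\|d_{L^2,g}\|_{L^2}$ appear, and no $L^\infty$ bound on a $\Pi$-projection is needed.

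\textbf{Your ``main obstacle'' is not one.} $e_g$ is piecewise polynomial, so the discrete trace inequality $\|e_g\|_{L^s(e)}\lesssim h^{-1/s}\|e_g\|_{L^s(K)}$ applies. Combined with the $O(h)$ scaling of the edge weights $q_\alpha$, it yields \eqref{eq:stab-Pgh-Pgh*} with no loss for every $r\ge0$. Your cancellation against $\sigma-P_g\sigma$ works but is unnecessary.
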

The proof of Theorem \ref{thm:main} is presented in Section \ref{sec:stab} and Section \ref{sec:err} in detail. 
Since Theorem \ref{thm:main-L2} can be proved using a very similar approach, we only provide several remarks to clarify the key differences in the proof (see Remark \ref{rmk:L2-bound}, \ref{rmk:L2-stab-e_g}, \ref{rmk:L2-stab-e_k}).
In addition, in Theorems \ref{thm:main} and \ref{thm:main-L2}, we have proven the convergence of scheme \eqref{eq:num-Ric} for arbitrary polynomial order, whereas for scheme \eqref{eq:num-Ric-L2}, the condition \( r \geq 1 \) and \( q \geq 1 \) is required.
This is because the proof of Theorem \ref{thm:main} relies on the \( L^{\infty} \) stability of the projection operator, which has so far only been shown for \( P_{g_h(t)} \) in scheme \eqref{eq:num-Ric} (see Lemma \ref{lm:stab-Pgh-Pgh*} and Appendix \ref{sec:P-Stab}).

Besides their convergence, another key feature of the proposed schemes is their ability to preserve certain geometric structures of the Ricci flow at a discrete level, including the Gauss-Bonnet theorem and area conservation. 
\begin{theorem}[Discrete Gauss--Bonnet and area conservation]\label{thm:GeoStruc}
    Let \(g_h(t)\) and \(\kappa_h(t)\) be the solutions of \eqref{eq:num-Ric} or \eqref{eq:num-Ric-L2}. Then the following identity holds:
    \begin{subequations}
        \begin{align}
            \label{eq:disGauBon}
            \int_{M_h} \kappa_h(t) \, \omega_{g_h(t)} &= \int_{M_h} \kappa_h(0) \, \omega_{g_h(0)}, \quad \forall t \in [0,T].\\
    \intertext{Moreover, for solutions of the scheme \eqref{eq:num-Ric-L2}, we have area conservation:}
    \label{eq:areaCons} 
    \int_{M_h} \omega_{g_h(t)} &= \int_{M_h} \omega_{g_h(0)}, ~\quad\quad\quad \forall t \in [0,T].
    \end{align}
    \end{subequations}    
\end{theorem}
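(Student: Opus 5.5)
For the discrete Gauss--Bonnet identity \eqref{eq:disGauBon} I would test the curvature equation with a constant. The constant function $\phi_h \equiv 1$ belongs to $V_h$ because $q \ge 1$, and it satisfies $\partial_t \phi_h = 0$. Its gradient vanishes elementwise, so $\nabla_{g_h}\phi_h = 0$ and the stiffness term $(\nabla_{g_h}\kappa_h,\nabla_{g_h}\phi_h)_{g_h,M_h}$ is zero. This holds for both \eqref{eq:num-Ric-kappa} and \eqref{eq:num-Ric-L2-kappa}, since the two schemes share the same curvature equation. Equation \eqref{eq:num-Ric-kappa} then reduces to
\[
\frac{\mathrm d}{\mathrm d t}\int_{M_h}\kappa_h(t)\,\omega_{g_h(t)} = 0 .
\]
Integrating from $0$ to $t$ gives \eqref{eq:disGauBon}. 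This part needs no information about the metric equation, so it holds for either choice of projection.

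For area conservation \eqref{eq:areaCons} under scheme \eqref{eq:num-Ric-L2}, I would first differentiate the volume form $\omega_{g_h}=\sqrt{\det g_h}\,\mathrm dx$ elementwise. Jacobi's formula gives
\[
\partial_t\sqrt{\det g_h}=\tfrac12\sqrt{\det g_h}\,\Tr(g_h^{-1}\partial_t g_h).
\]
Since $\Tr(g_h^{-1}\sigma)=\Tr(g_h^{-1}\sigma g_h^{-1} g_h)=\langle \sigma,g_h\rangle_{g_h}$, this yields
\[
\frac{\mathrm d}{\mathrm d t}\int_{M_h}\omega_{g_h}=\tfrac12(\partial_t g_h, g_h)_{g_h,M_h}.
\]
The key observation is that $g_h(t)\in\Sigma_h$, so $g_h$ itself is an admissible test function $\rho$ in the definition \eqref{eq:L2-projection} of $\Pi_{g_h}$. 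Substituting \eqref{eq:num-Ric-L2-g} and using this gives
\[
(\partial_t g_h,g_h)_{g_h,M_h}=-2\big((\kappa_h-\bar\kappa_h)g_h,g_h\big)_{g_h,M_h}.
\]
Because $\langle g_h,g_h\rangle_{g_h}=\Tr(I)=2$, the right-hand side equals $-4\int_{M_h}(\kappa_h-\bar\kappa_h)\,\omega_{g_h}$.

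It therefore remains to show $\int_{M_h}\kappa_h(t)\,\omega_{g_h(t)}=\bar\kappa_h\int_{M_h}\omega_{g_h(t)}$. I would argue via the quantity
\[
A(t)=\int_{M_h}\omega_{g_h(t)} .
\]
By the Gauss--Bonnet part and the definition of $\bar\kappa_h$, we have $\int_{M_h}\kappa_h(t)\,\omega_{g_h(t)}=\bar\kappa_h A(0)$. The computation above then becomes the linear ODE
\[
A'(t)=-2\bar\kappa_h\big(A(0)-A(t)\big), \qquad A(0)=A(0).
\]
The constant function $A(t)\equiv A(0)$ solves it, so by uniqueness $A(t)=A(0)$. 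The problem is posed with $\bar\kappa_h=0$ or $\bar\kappa_h$ chosen as above, and in either case the same argument applies.

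The main technical point is justifying elementwise differentiation under the integral. This is routine: $g_h$ is piecewise polynomial in space and $C^1$ in time, and it stays positive definite while the solution exists. A second point to note is that the argument genuinely requires $g_h\in\Sigma_h$ as a test function. For scheme \eqref{eq:num-Ric} the projection $P_{g_h}$ is defined only through edge and interior moments, not by $L^2$-orthogonality against $g_h$. This is why area conservation is claimed only for \eqref{eq:num-Ric-L2}.
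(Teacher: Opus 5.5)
Your argument for both identities is correct and is essentially the paper's proof. You test \eqref{eq:num-Ric-kappa} with $\phi_h\equiv 1$. For the area, you use $g_h\in\Sigma_h$ as a test function in the definition of $\Pi_{g_h}$, together with $\langle g_h,g_h\rangle_{g_h}=2$ and the discrete Gauss--Bonnet identity. This reduces the claim to the linear ODE $A'(t)=2\bar\kappa_h\,(A(t)-A(0))$ with zero initial deviation, exactly as in the paper.

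One correction: your closing claim that the same argument applies when $\bar\kappa_h=0$ is false. With $\bar\kappa_h=0$, the step $\int_{M_h}\kappa_h(t)\,\omega_{g_h(t)}=\bar\kappa_h A(0)$ fails, since $0$ is not the average initial curvature. Your own computation instead gives $A'(t)=-2\int_{M_h}\kappa_h(0)\,\omega_{g_h(0)}$. This is a constant that is nonzero whenever the initial total curvature is nonzero; for a sphere-like surface it is close to $-8\pi$. So under the unnormalized scheme the area changes linearly in time. Area conservation is specific to the normalized flow, which is the setting in which the paper states the theorem. Only the discrete Gauss--Bonnet identity carries over to $\bar\kappa_h=0$.
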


\begin{proof}
    By choosing the test function \( \phi_h \equiv 1 \) in \eqref{eq:num-Ric-kappa} or \eqref{eq:num-Ric-L2-kappa}, we obtain  
    \[ 
        \frac{\mathrm{d}}{\mathrm{d} t} \int_{M_h} \kappa_h(t) \omega_{g_h(t)}
    =  \frac{\mathrm{d}}{\mathrm{d} t} ( \kappa_h(t), 1 )_{g_h(t), M_h} 
    = -( \nabla_{g_h} (\kappa_h(t)), \nabla_{g_h} 1 )_{g_h(t), M_h} = 0.
    \]  
    This concludes the proof of \eqref{eq:disGauBon}.
    Next, to prove \eqref{eq:areaCons}, we define
    \[
    f(t) = \int_{M_h} \omega_{g_h(t)} - \int_{M_h} \omega_{g_h(0)}.
    \]
    Taking the time derivative, we have 
    \[ \frac{\mathrm{d}}{\mathrm{d}t} f(t) = \frac{\mathrm{d}}{\mathrm{d} t} \int_{M_h}\omega_{g_h(t)} 
    = \frac{1}{2}\int_{M_h} \Tr_{g_h(t)}(\partial_t g_h(t)) \omega_{g_h(t)} 
    = \frac{1}{2}(g_h(t), \partial_t g_h(t))_{g_h(t), M_h}. \]
    Using \eqref{eq:num-Ric-L2-g} and the definition of \( \Pi_{g_h(t)} \), we obtain  
    \[
    \begin{aligned}
        \frac{1}{2}(g_h(t), &\partial_t g_h(t))_{g_h(t), M_h} = 
        (g_h(t), (\bar \kappa_h - \kappa_h(t)) g_h(t))_{g_h(t), M_h} = 
        2\int_{M_h}  (\bar \kappa_h - \kappa_h(t)) \omega_{g_h(t)} \\ 
        & = 2\big(\bar \kappa_h \int_{M_h} \omega_{g_h(t)} 
        - \int_{M_h} \kappa_h(t) \omega_{g_h(t)}\big)
        = 2 \bar \kappa_h \big(\int_{M_h} \omega_{g_h(t)} - \int_{M_h} \omega_{g_h(0)}\big),  
    \end{aligned}
    \]
    where in the last step, we apply \eqref{eq:disGauBon}.
    Then, it holds that 
    \[ \frac{\mathrm{d}}{\mathrm{d}t} f(t) = 2 \bar \kappa_h f(t). \]
    Since $f(0) = 0$, it follows that $f(t) = 0$ for every $t \in [0,T]$.    
\end{proof}

The finite element discretizations \eqref{eq:num-Ric} and \eqref{eq:num-Ric-L2} can be expressed in matrix-vector form \cite{kovacs2017convergence}. Specifically, for the finite element solutions \( g_h(t) \in \Sigma_h \) and \( \kappa_h(t) \in V_h \), we collect the values of their degrees of freedom (DOFs) as vectors \( \g(t) \in \mathbb{R}^{\dim \Sigma_h} \) and \( \ka(t) \in \mathbb{R}^{\dim V_h} \), which serve as the vector representations of the finite element solutions \( g_h(t) \) and \( \kappa_h(t) \), respectively.
We define the metric-dependent mass matrix $\M[ \g(t)] \in \mathbb{R}^{{\rm dim} V_h \times {\rm dim} V_h }$ and stiffness matrix $\A[ \g(t)] \in \mathbb{R}^{{\rm dim} V_h \times {\rm dim} V_h }$ as: for $i,j = 1,\dots, {\rm dim}V_h,$
\[ (\M[ \g(t)])_{i,j} = (\phi_i, \phi_j)_{g_h(t), M_h}, \quad 
(\A[ \g(t)])_{i,j} = (\nabla_{g_h(t)} \phi_i, \nabla_{g_h(t)} \phi_j )_{g_h(t), M_h}, \quad  
   \]
with the finite element nodal basis functions $\phi_i \in V_h$.
Therefore, algebraically scheme \eqref{eq:num-Ric} corresponds to a differential algebraic equation (DAE)
\begin{subequations}\label{eq:DAE}  
\begin{align}
\label{eq:DAE-g} 
\dot{\g}(t) + f[\g(t), \ka(t)] &= 0, \\ 
\label{eq:DAE-kappa} 
\frac{\mathrm{d}}{\mathrm{d} t}\big( \M[ \g(t)] {\ka}(t) \big) +  
\A[ \g(t) ]  \ka(t) &= 0, \\ 
\intertext{where \( f[\g(t), \ka(t)] \in \mathbb{R}^{{\rm dim} \Sigma_h} \) collects the DOFs associated with \( 2P_{g_h(t)}( ( \kappa_h(t) - \bar \kappa_h) g_h(t)) \) in \eqref{eq:num-Ric-g}. Similarly, \eqref{eq:num-Ric-L2} also corresponds to a differential algebraic equation, and it can be obtained by replacing \eqref{eq:DAE-g} with 
} 
\label{eq:DAE-g-L2} 
\dot{\g}(t) + \tilde  f[\g(t), \ka(t)] &= 0, 
\end{align}
\end{subequations}
where  \( \tilde f[\g(t), \ka(t)] \in \mathbb{R}^{{\rm dim} \Sigma_h} \) collects the DOFs associated with \( 2\Pi_{g_h(t)}( ( \kappa_h(t) - \bar \kappa_h) g_h(t)) \) in \eqref{eq:num-Ric-L2-g}.

\subsection{Approximations of the Riemannian metric and the Gaussian curvature}
Let \( g(t) \) be the smooth solution of equation \eqref{eq:Ref-Ricci}. We denote its pullback on \( M_h \) and the canonical Regge interpolation as
\[ g^{(-l)}(t) = a^*g(t) \in \Sigma_+, \quad \text{and} \quad 
g_h^*(t) = P_hg^{(-l)}(t) \in \Sigma_h. \]
{The following property follows directly from the definition of the canonical Regge interpolation and its approximation properties; see Section~2.3 in~\cite{li2018regge}. For \(t\in[0,T]\),} 
\begin{subequations}\label{eq:prop-g_h^*}  
\begin{align}
\label{eq:prop-g_h^*-a} 
    \partial_t g_h^*(t) = \partial_t P_h g^{(-l)}(t)&
    = P_h \partial_t g^{(-l)}(t)
    = P_h (\partial_t g)^{(-l)}(t), \\ 
\label{eq:prop-g_h^*-b} 
 \| g_h^*(t) - g^{(-l)}(t) \|_{W^{k,p}(K)} & \lesssim h^{r+1-k} \| g^{(-l)}(t) \|_{W^{r+1,p}(K)},\\ 
\label{eq:prop-g_h^*-c}
    \| \partial_t g_h^*(t) - (\partial_t g)^{(-l)}(t) \|_{W^{k,p}(K)} & \lesssim h^{r+1-k} \| (\partial_t g)^{(-l)}(t) \|_{W^{r+1,p}(K)}, 
\end{align}
\end{subequations}
{where \(k\in\{0,\dots,r+1\}\), \(p\in[1,\infty]\), and \(K\in\mathcal{T}_h\).
The following result, established in Lemmas~4.5--4.6 of~\cite{gawlik2020high}, is useful for controlling errors in the inverse and determinant of a metric tensor.} 
For a triangle \(K\in\mathcal{T}_h\), let \(g\) and \(\tilde g\) be smooth symmetric \((0,2)\)-tensor fields on \(K\).
The following estimates hold for \(p\in[1,\infty]\):
\begin{subequations}\label{eq:inv-det}  
\begin{align}
\label{eq:inv-det-a} 
        \| (\tilde{g})^{-1} -  g^{-1} \|_{L^p(K)} &\leq 
        \frac{\|g^{-1}\|_{L^\infty(K)}^2}{1 - 
        \| \tilde{g} -  g  \|_{L^\infty(K)}\|g^{-1}\|_{L^\infty(K)}} \, \| \tilde{g} - g \|_{L^p(K)}, \\ 
\label{eq:inv-det-b} 
\hspace{-2mm}    
        \| \sqrt{\det (\tilde{g})} -  \sqrt{\det (g)} \|_{L^p(K)} &\leq 
        \bigl(\|\tilde{g}\|_{L^\infty(K)} + \|g\|_{L^\infty(K)}\bigr)\, \| g^{-1}\|_{L^\infty(K)} \, \| \tilde{g} - g \|_{L^p(K)}.
\end{align}
\end{subequations}
Under the conditions of Theorem \ref{thm:main}, a direct consequence of \eqref{eq:inv-det} and \eqref{eq:prop-g_h^*} is that, if the mesh size \( h \) is sufficiently small, the canonical Regge interpolant \( g_h^*(t) \in \Sigma_h \) is uniformly positive definite and satisfies
\begin{equation}\label{eq:det-inv-app} 
{   \| \sqrt{\det g_h^*(t)} - \sqrt{\det g^{(-l)}(t)} \|_{L^{\infty}(M_h)} + 
    \|  (g_h^*)^{-1}(t) - (g^{(-l)})^{-1}(t) \|_{L^{\infty}(M_h)} \lesssim h^{r+1},} 
\end{equation}
for $t \in [0,T]$. In the following, we always assume that \( h \) is sufficiently small such that \( g_h^*(t) \in \Sigma_{h,+} \), which implies that \( g_h^*(t) \) can serve as a Riemannian metric on \( M_h \).

\begin{lemma}[Norm equivalence of $g_h^*$]\label{lm:norm-equ}
Under the conditions of Theorem \ref{thm:main}, there exists a constant \( h_0 > 0 \) such that for $0<h\leq h_0$ and \( v, w \in H^1(\mathcal{M}) \), the following estimates hold for $t \in [0,T]$,
\begin{subequations}
    \begin{align}
        \label{eq:M-g-g*} 
&\quad | (\nabla_g v,\nabla_g w)_{g(t), \mathcal{M}} - (\nabla_{g_h^*} v^{(-l)}, \nabla_{g_h^*} w^{(-l)})_{g_h^*(t), M_h} | \\ \notag
&\lesssim h^{r+1}\| \nabla  v^{(-l)}\|_{L^2(M_h)} \| \nabla  w^{(-l)}\|_{L^2(M_h)},  
 \\ 
\label{eq:A-g-g*} 
&| ( v, w)_{g(t), \mathcal{M}} - ( v^{(-l)}, w^{(-l)})_{g_h^*(t), M_h} | \lesssim h^{r+1}\| v^{(-l)}\|_{L^2(M_h)} \| w^{(-l)}\|_{L^2(M_h)}. \\ 
\intertext{Moreover, the following norm equivalence holds for $t \in [0,T]$,}
 \label{eq:g-g*-equi-L2} 
&\|v^{(-l)}\|_{L^2(M_h)} \eqsim
\|v^{(-l)}\|_{L^2(g_h^*(t), M_h)} \eqsim
\|v\|_{L^2(g(t), \mathcal{M})}\eqsim
\|v\|_{L^2(\mathcal{M})}, \\ 
\label{eq:g-g*-equi-H1}
&\| \nabla v^{(-l)} \|_{L^2(M_h)}  \eqsim
\| \nabla_{g_h^*} v^{(-l)} \|_{L^2(g_h^*(t), M_h)} \eqsim
\| \nabla_{g} v \|_{L^2(g(t), \mathcal{M})} \eqsim
\| \nabla v \|_{L^2(\mathcal{M})}.
    \end{align}
\end{subequations}
\end{lemma}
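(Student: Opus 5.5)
The key observation is that the integrals over $\mathcal{M}$ with respect to $g(t)$ can be transported exactly to $M_h$ via the map $a$. Since $g^{(-l)}=a^*g$ and $a:M_h\to\mathcal{M}$ is a bijection that is smooth on each triangle, the pullback is an isometry $(M_h,g^{(-l)})\to(\mathcal{M},g)$, piecewise on each $K$. Integrals are invariant under pullback, so
\[
(v,w)_{g(t),\mathcal{M}}=(v^{(-l)},w^{(-l)})_{g^{(-l)}(t),M_h},\qquad (\nabla_g v,\nabla_g w)_{g(t),\mathcal{M}}=(\nabla_{g^{(-l)}} v^{(-l)},\nabla_{g^{(-l)}} w^{(-l)})_{g^{(-l)}(t),M_h}.
\]
Both \eqref{eq:M-g-g*} and \eqref{eq:A-g-g*} therefore reduce to comparing, elementwise on $M_h$, the same bilinear forms under the two metrics $g^{(-l)}$ and $g_h^*$, both written in the flat orthonormal frame $(\partial_1,\partial_2)$ of each $K$. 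For the stiffness form I will write
\[
\int_K \Big[(g_h^*)^{-1}\sqrt{\det g_h^*}-(g^{(-l)})^{-1}\sqrt{\det g^{(-l)}}\Big]\nabla v^{(-l)}\cdot\nabla w^{(-l)}\,\mathrm{d}x,
\]
and split the bracket as $\big((g_h^*)^{-1}-(g^{(-l)})^{-1}\big)\sqrt{\det g_h^*}+(g^{(-l)})^{-1}\big(\sqrt{\det g_h^*}-\sqrt{\det g^{(-l)}}\big)$. By \eqref{eq:det-inv-app}, each difference is $O(h^{r+1})$ in $L^\infty(M_h)$. The other factors are uniformly bounded: $g^{(-l)}$ is smooth and uniformly positive definite because $g$ is and $a$ has bounded derivatives, and $g_h^*$ is bounded by \eqref{eq:prop-g_h^*-b} with $k=0$, $p=\infty$. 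Cauchy--Schwarz then gives \eqref{eq:M-g-g*}. The mass form \eqref{eq:A-g-g*} is identical, with only the determinant difference present.

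For the norm equivalences, the first step is to note that $g^{(-l)}(t)$ has eigenvalues bounded above and below uniformly in $h$ and $t\in[0,T]$. This follows from the uniform positive definiteness of $g(t)$ on $[0,T]$ and from the fact that $Da$ and its inverse are uniformly bounded, since $M_h$ is an $O(h^2)$-perturbation of $\mathcal{M}$. By \eqref{eq:det-inv-app}, for $h\le h_0$ the same bounds (halved and doubled, say) hold for $g_h^*(t)$. Pointwise, $\sqrt{\det g}$ and $g^{-1}$ are then comparable to $1$ and to the identity, which gives
\[
\|v^{(-l)}\|_{L^2(M_h)}\eqsim\|v^{(-l)}\|_{L^2(g_h^*,M_h)},\qquad \|\nabla v^{(-l)}\|_{L^2(M_h)}\eqsim\|\nabla_{g_h^*}v^{(-l)}\|_{L^2(g_h^*,M_h)}.
\]
The isometry identity above gives the same equivalence for $\|v\|_{L^2(g,\mathcal{M})}$ and $\|\nabla_g v\|_{L^2(g,\mathcal{M})}$. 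Finally, $\|v\|_{L^2(g,\mathcal{M})}\eqsim\|v\|_{L^2(\mathcal{M})}$ and the corresponding gradient statement hold because $g$ and $i_{\mathcal{M}}^*\delta_{\mathbb{R}^3}$ are uniformly equivalent smooth metrics on the compact surface $\mathcal{M}$. Alternatively, the equivalences involving $g_h^*$ follow from \eqref{eq:M-g-g*}--\eqref{eq:A-g-g*} with $v=w$ once $h^{r+1}$ is small.

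The only delicate point is the uniform two-sided bounds on $g^{(-l)}$: this requires the standard geometric estimates on the lift, namely $\|Da\|_{L^\infty}+\|(Da)^{-1}\|_{L^\infty}\lesssim1$ piecewise. These are the classical results for surface finite elements, in the style of Dziuk and Demlow, and I will cite them rather than reprove them. Everything else is routine bookkeeping.
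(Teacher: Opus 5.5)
Your proposal is correct and follows the paper's proof essentially step for step. You transport the $g$-integrals on $\mathcal{M}$ elementwise to $M_h$ via the pullback $a^*$, compare the integrands built from $\sqrt{\det g^{(-l)}}\,(g^{(-l)})^{-1}$ and $\sqrt{\det g_h^*}\,(g_h^*)^{-1}$ using \eqref{eq:det-inv-app}, and read off the norm equivalences from the uniform two-sided bounds on $g^{(-l)}$ and $g_h^*$ with $v=w$. The paper does exactly this; the only difference is that you justify the uniform bounds on $g^{(-l)}$ through bounds on $Da$, whereas the paper simply takes them as part of the hypotheses of Theorem \ref{thm:main}.
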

\begin{proof}
For a flat triangle \( K \in \mathcal{T}_h \), let \( \tilde{K} = a(K) \subset \mathcal{M} \) be a curved triangle that lies on the smooth manifold \( \mathcal{M} \). 
Then, the set 
$\tilde{\mathcal{T}_h} = \{ \tilde{K} = a(K), \text{ where } K \in \mathcal{T}_h \},$ 
forms a curved triangulation of the smooth manifold \( \mathcal{M} \). 
Using the property of the pullback operator, the following expressions hold:
\[
\begin{aligned}
(v,w)_{g(t), \tilde K}
&= (v^{(-l)},w^{(-l)})_{g^{(-l)}, K} 
= \int_K  v^{(-l)} w^{(-l)} \sqrt{\det g^{(-l)}}\mathrm{d}x, \\
 \hspace{-31.9pt}
 (\nabla_g v,\nabla_g w)_{g(t), \tilde K}
&= \int_K  (g^{(-l)})^{-1} \nabla v^{(-l)} \cdot \nabla w^{(-l)} \sqrt{\det g^{(-l)}}\mathrm{d}x.
\end{aligned}
\]
Similarly, the following expressions hold:
\[
\begin{aligned}
    (v^{(-l)},w^{(-l)})_{g_h^*(t), K} &=  \int_K  v^{(-l)} w^{(-l)} \sqrt{\det g_h^*}\mathrm{d}x, \\ 
(\nabla_{g_h^*} v^{(-l)}, \nabla_{g_h^*} w^{(-l)})_{g_h^*(t), K} & = 
\int_K  (g_h^*)^{-1} \nabla v^{(-l)} \cdot \nabla w^{(-l)} \sqrt{\det g_h^*}\mathrm{d}x.
\end{aligned}
\]
By comparing the above expressions, summing over all \(K\in\mathcal{T}_h\), and invoking \eqref{eq:det-inv-app}, we obtain \eqref{eq:M-g-g*} and \eqref{eq:A-g-g*}. Under the conditions of Theorem \ref{thm:main}, both $g^{(-l)}(t)$ and its inverse $(g^{(-l)}(t))^{-1}$ are uniformly bounded on the time interval $[0,T]$. According to \eqref{eq:prop-g_h^*} and \eqref{eq:det-inv-app}, for sufficiently small $h$, both $g_h^*(t)$ and its inverse $(g_h^*(t))^{-1}$ are also uniformly bounded on the time interval $[0,T]$. Therefore, \eqref{eq:g-g*-equi-L2} and \eqref{eq:g-g*-equi-H1} follow from taking $v = w$ in the above expressions and applying a H\"older inequality.
\end{proof}

For a smooth scalar function \( u(t) = u(\cdot, t): \mathcal{M} \to \mathbb{R} \), we define its Ritz projection onto the Lagrange finite element space \( V_h \) as \( R_h u(t) \in V_h \), by requiring 
\begin{subequations} \label{eq:Ritz} 
    \begin{align}
      \label{eq:Ritz-a}
     (\nabla_{g_h^*} R_h u(t), \nabla_{g_h^*}  \phi_h  )_{g_h^*, M_h} &= (\nabla_{g} u(t), \nabla_{g}\phi_h^{(l)}  )_{g, \mathcal{M}}, \quad \forall \phi_h \in V_h, \\ 
      \label{eq:Ritz-b}
     ((R_h u(t))^{(l)}, 1)_{\mathcal{M}} &= (u(t), 1)_{\mathcal{M}}.
    \end{align}  
\end{subequations}
The following lemma provides an error estimate for the Ritz projection and its time derivative. 
\begin{lemma}[Properties of Ritz projection]\label{lm:app-Ritz} 
For a smooth function \( u(t) = u(\cdot, t): \mathcal{M} \to \mathbb{R} \), let \( R_h u(t) \in V_h \) be its Ritz projection onto the Lagrange finite element space \( V_h \) of degree \( q \), as defined in \eqref{eq:Ritz}, 
where \( g_h^* = P_h g^{(-l)}(t) \in \Sigma_{h,+} \) is the canonical Regge interpolation onto the Regge element space of degree \( r \). 
Then, under the conditions of Theorem \ref{thm:main}, there exists a constant \( h_0 > 0 \) such that for $0<h\leq h_0$, the following error estimates hold for $t \in [0,T]$:
\begin{subequations}\label{eq:app-Ritz}
    \begin{align}
        \label{eq:app-Ritz-H1}
       & \| (u(t))^{(-l)} - R_h u(t) \|_{H^1(M_h)} + 
        \|\partial_t (u(t))^{(-l)} - \partial_t R_h u(t)\|_{H^1(M_h)} 
        \lesssim h^{r+1} + h^q, 
         \\
      \label{eq:app-Ritz-L2}
     & \| (u(t))^{(-l)} - R_h u(t) \|_{L^2(M_h)} + 
      \|\partial_t (u(t))^{(-l)} - \partial_t R_h u(t)\|_{L^2(M_h)} 
        \lesssim h^{r+1} + h^{q+1}, \\  
\label{eq:app-Ritz-Linf} 
   & \| (u(t))^{(-l)} - R_h u(t) \|_{L^{\infty}(M_h)} 
      \lesssim \ln(\frac{1}{h}) h^{r+1} + (\ln(\frac{1}{h}))^{\bar q} h^{{ q+1}},
\end{align}
\end{subequations}
where  $\bar{q} = {1}$ when $q = 1$, and $\bar{q} = 0$ otherwise. 
\end{lemma}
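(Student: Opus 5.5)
We treat \eqref{eq:Ritz} as a perturbed Ritz projection. The bilinear form on $M_h$ uses the Regge interpolant $g_h^*$ and a flat surface, while the right-hand side uses the exact metric $g$ on $\Man$. The strategy is to compare $R_h u$ with the standard Ritz projection $\tilde R_h u$, defined by $(\nabla_{g_h^*}\tilde R_h u,\nabla_{g_h^*}\phi_h)_{g_h^*,M_h}=(\nabla_{g_h^*}u^{(-l)},\nabla_{g_h^*}\phi_h)_{g_h^*,M_h}$ with the same mean condition, and to account for the metric mismatch as a consistency error of order $h^{r+1}$.

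\textbf{$H^1$ estimate.} The form $a_h(v,w)=(\nabla_{g_h^*}v,\nabla_{g_h^*}w)_{g_h^*,M_h}$ is uniformly coercive on mean-zero functions. This follows from \eqref{eq:g-g*-equi-H1}, the uniform positivity of $g_h^*$, and a Poincar\'e inequality on $M_h$; the mean constraint \eqref{eq:Ritz-b} is transferred by the lift with an $O(h^2)$ geometric error. For $\phi_h\in V_h$ we have
\[
a_h(u^{(-l)}-R_hu,\phi_h)=a_h(u^{(-l)},\phi_h)-(\nabla_g u,\nabla_g\phi_h^{(l)})_{g,\Man},
\]
and by \eqref{eq:M-g-g*} this is bounded by $Ch^{r+1}\|\nabla\phi_h\|_{L^2(M_h)}$. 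Céa-type arguments with the Lagrange interpolant $I_h u^{(-l)}$ then give $\|u^{(-l)}-R_hu\|_{H^1}\lesssim h^q+h^{r+1}$.

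\textbf{$L^2$ estimate.} We use Aubin--Nitsche duality. Let $e=u^{(-l)}-R_hu$ minus its mean, and solve $-\Delta_{g}\psi=e^{l}-\overline{e^l}$ on $\Man$; elliptic regularity gives $\|\psi\|_{H^2}\lesssim\|e\|_{L^2}$. Testing, we move between the $g$-form on $\Man$ and the $g_h^*$-form on $M_h$ using \eqref{eq:M-g-g*}. This yields
\[
\|e\|_{L^2}^2\lesssim a_h(e,\psi^{(-l)}-I_h\psi^{(-l)})+h^{r+1}\|e\|_{H^1}\|\psi\|_{H^1}+h^{r+1}\|\psi\|_{H^1},
\]
so $\|e\|_{L^2}\lesssim h^{q+1}+h^{r+1}$. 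The mean is controlled directly from \eqref{eq:Ritz-b}.

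\textbf{Time derivative.} We differentiate \eqref{eq:Ritz-a} in $t$. The function $\partial_tR_hu$ satisfies a Ritz-type problem with data $\partial_t u$, plus the extra terms
\[
(\nabla\partial_t (g_h^*)\text{-terms})(\nabla R_hu,\nabla\phi_h)-(\partial_t g\text{-terms})(\nabla u,\nabla\phi_h^{(l)}).
\]
These differ by $O(h^{r+1})+O(\|\nabla e\|)$ tested against $\phi_h$, by \eqref{eq:prop-g_h^*-c} and the $H^1$ bound just proved. Repeating both arguments gives the $\partial_t$ parts of \eqref{eq:app-Ritz-H1}--\eqref{eq:app-Ritz-L2}. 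In the duality step the term involving $\nabla e$ must be handled by moving the derivative onto the smooth dual solution, $\nabla\psi$, which gives an $L^2$-type bound.

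\textbf{$L^\infty$ estimate, the main obstacle.} Here we use the weighted-norm or discrete Green's function technique for the Ritz projection with coefficients $A=(g_h^*)^{-1}\sqrt{\det g_h^*}$. These coefficients are only piecewise smooth, but they are uniformly Lipschitz-close to the smooth $A_g$ within $O(h^{r+1})$ in $L^\infty$. We split $R_hu-u^{(-l)}=(R_hu-\tilde R_hu)+(\tilde R_hu-u^{(-l)})$, where $\tilde R_h$ is the Ritz projection for the smooth coefficient $A_g$ on $M_h$ (i.e.\ $g^{(-l)}$). Classical results (Schatz--Wahlbin, and Demlow for surfaces) give $\|\tilde R_hu-u^{(-l)}\|_{L^\infty}\lesssim(\ln\frac1h)^{\bar q}h^{q+1}$. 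The difference $\delta_h=R_hu-\tilde R_hu\in V_h$ satisfies
\[
(A\nabla\delta_h,\nabla\phi_h)=((A_g-A)\nabla\tilde R_hu,\nabla\phi_h)+O(h^{r+1})\|\phi_h\|_{W^{1,1}}.
\]
The $O(h^{r+1})\|\phi_h\|_{W^{1,1}}$ term accounts for the surface geometry error, which is $O(h^2)$ and therefore harmless. The $W^{1,1}$-stability of the discrete Green's function, $\|\nabla G_h^z\|_{L^1}\lesssim\ln\frac1h$, then gives $\|\delta_h\|_{L^\infty}\lesssim\ln(\frac1h)h^{r+1}$.

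The delicate point is establishing this discrete Green's function bound uniformly for the nonsmooth, $h$-dependent coefficient $A$. We would handle it with a perturbation argument: treat $A-A_g$, which is $O(h^{r+1})$ in $L^\infty$, as a small perturbation of the smooth problem.
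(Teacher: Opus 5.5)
Your proposal is correct in outline. For the time derivatives it is essentially the paper's Appendix~A: differentiate \eqref{eq:Ritz-a}, use a C\'ea-type argument with $I_h$, then duality, with the term containing $\nabla(u^{(-l)}-R_hu)$ integrated by parts onto the dual solution (the paper's term $A_4$). For $R_hu$ itself the routes differ. The paper rewrites \eqref{eq:Ritz-a} as a perturbed Galerkin problem on $\mathcal{M}$,
\[
(\nabla_g((R_hu)^{(l)}-u),\nabla_g\phi_h^{(l)})_{g,\mathcal{M}}=F(\phi_h^{(l)}),
\]
checks the hypotheses of Demlow's abstract framework, and reads off the $H^1$, $L^2$ and $L^\infty$ bounds from his Theorems~3.1--3.2. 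You instead prove the $H^1$ and $L^2$ bounds by hand. For $L^\infty$ you split off the Ritz projection $\tilde R_h$ for the metric $g^{(-l)}$ and estimate $\delta_h=R_hu-\tilde R_hu$ with a discrete Green's function. The paper's route is short but outsources all the max-norm machinery. Yours is more self-contained, and it shows that $\ln(\frac{1}{h})h^{r+1}$ comes from pairing the $O(h^{r+1})$ $L^\infty$ coefficient perturbation with $\|\nabla G_h\|_{L^1}\lesssim\ln\frac{1}{h}$, separately from the classical $(\ln\frac{1}{h})^{\bar q}h^{q+1}$ term.

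There are two points to fix. First, there is no surface-geometry remainder. Since $g^{(-l)}=a^*g$, one has $(\nabla_g u,\nabla_g\phi_h^{(l)})_{g,\mathcal{M}}=\int_{M_h}A_g\nabla u^{(-l)}\cdot\nabla\phi_h\,\mathrm{d}x$ exactly, with $A_g=\sqrt{\det g^{(-l)}}\,(g^{(-l)})^{-1}$. Hence $(A\nabla\delta_h,\nabla\phi_h)=((A_g-A)\nabla\tilde R_hu,\nabla\phi_h)$ holds with no extra term. Likewise, \eqref{eq:Ritz-b} already gives $u-(R_hu)^{(l)}$ exactly zero mean on $\mathcal{M}$. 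Your remark that an $O(h^2)$ remainder would be harmless is wrong: for $r\ge 2$ it would cap the rate at $h^2$.

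Second, the step you call delicate can be bypassed. Use the discrete Green's function $G_h$ of the smooth coefficient $A_g$, i.e.\ of the Galerkin problem on $\mathcal{M}$ with the lifted space, for which $\|\nabla G_h\|_{L^1}\lesssim\ln\frac{1}{h}$ is classical. Then, up to a harmless mean-value term,
\[
\delta_h(z)=(A_g\nabla G_h,\nabla\delta_h)=((A_g-A)\nabla\tilde R_hu,\nabla G_h)+((A_g-A)\nabla\delta_h,\nabla G_h).
\]
The last term is bounded by $h^{r+1}\|\nabla\delta_h\|_{L^2}\|\nabla G_h\|_{L^2}\lesssim h^{2r+2}(\ln\frac{1}{h})^{1/2}$. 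This uses the energy bound $\|\nabla\delta_h\|_{L^2}\lesssim h^{r+1}$, and that same energy argument is what makes your proposed perturbation of the Green's function work. Also state explicitly that the first term needs $\|\nabla\tilde R_hu\|_{L^\infty}\lesssim 1$, i.e.\ $W^{1,\infty}$-stability of the smooth Ritz projection.
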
 
{\begin{proof}
The \(L^2\), \(H^1\), and \(L^\infty\) error estimates for the Ritz projection \(R_h u(t)\) follow the framework developed in \cite[Section 3]{demlow2009higher}.
Specifically, we first rewrite \eqref{eq:Ritz-a} in the equivalent form
\begin{equation}\label{eq:Ritz-1-a-ref}
(\nabla_{g} ((R_h u)^{(l)} - u), \nabla_{g} \phi_h^{(l)})_{g, \mathcal{M}}
= F(\phi_h^{(l)}), \quad \forall \phi_h \in V_h,
\end{equation}
where the linear functional \(F\) is defined by
\[
F(\phi_h^{(l)}) =
(\nabla_{g} (R_h u)^{(l)}, \nabla_{g} \phi_h^{(l)})_{g, \mathcal{M}}
- (\nabla_{g_h^*} R_h u(t), \nabla_{g_h^*} \phi_h)_{g_h^*, M_h}.
\]
Then, one can verify that assumptions A1-A4 in \cite[Section 3.1]{demlow2009higher} are satisfied in our setting. 
Therefore, by \cite[Theorem 3.1, (3.3) and (3.5)]{demlow2009higher} and \cite[Theorem 3.2, (3.14)]{demlow2009higher}, together with Lemma \ref{lm:norm-equ}, the following \(L^2\), \(H^1\), and \(L^\infty\) estimates for \(R_h u(t)\) hold:
\begin{subequations}\label{eq:L2H1Linf-Ritz}  
\begin{align}
\label{eq:H1-Ritz} 
\| u^{(-l)} - R_h u \|_{H^1(M_h)} &\lesssim h^{r+1} + h^q,  \\ 
\label{eq:L2-Ritz} 
\| u^{(-l)} - R_h u \|_{L^2(M_h)} &\lesssim h^{r+1} + h^{q+1}, \\
\label{eq:L2-inf} 
\| u^{(-l)} - R_h u \|_{L^{\infty}(M_h)}
&\lesssim \ln\!\bigl(\frac{1}{h}\bigr)\, h^{r+1}
+ \bigl(\ln\!\bigl(\frac{1}{h}\bigr)\bigr)^{\bar q} h^{q+1}.
\end{align}
\end{subequations}
The error estimates for the time derivative \(\partial_t R_h u(t)\) are deferred to Appendix \ref{sec:Ritz}.
\end{proof}
} 

Let $\kappa(t): \mathcal{M} \rightarrow \mathbb{R}$ be the smooth solution of \eqref{eq:Ref-Ricci}.  In the following, we define 
\[ \kappa_h^*(t) = R_h \kappa(t) \in V_h. \]
Based on Lemma \ref{lm:app-Ritz} (Properties of Ritz projection), \eqref{eq:prop-g_h^*}, and \eqref{eq:det-inv-app}, the following boundedness results hold for $r \geq 0$ and $q \geq 1$:
\begin{subequations}\label{eq:bound-g*-k*} 
    \begin{align}
    \label{eq:bound-k*} 
    \|\kappa_h^* \|_{L^{\infty}(0,T;W^{1,\infty}(M_h))} + 
    \|\partial_t\kappa_h^*\|_{L^{\infty}(0,T; L^{\infty}(M_h))} & \leq C, \\
    \label{eq:bound-g*}
    \|g_h^* \|_{L^{\infty}(0,T; L^{\infty}(M_h))} + 
    \| (g_h^*)^{-1} \|_{L^{\infty}(0,T; L^{\infty}(M_h))} &+
    \\ \notag 
    \| \sqrt{\det g_h^*} \|_{L^{\infty}(0,T; L^{\infty}(M_h))} + 
    \| \partial_t g_h^* \|_{L^{\infty}(0,T; L^{\infty}(M_h))} &\leq  
    C.
    \end{align}
\end{subequations}
Both \eqref{eq:bound-k*} and \eqref{eq:bound-g*} will be frequently employed in the subsequent stability analysis to control the nonlinearity.

\section{Stability estimate}\label{sec:stab}

\subsection{{Consistency defects}  and error equations}
Recall that \( g_h^*(t) \in \Sigma_h \) and \( \kappa_h^*(t) \in V_h \) are the canonical Regge interpolation of \( g(t) \), and the Ritz projection of \( \kappa(t) \), respectively, which serve as approximations to the smooth solutions of equation \eqref{eq:Ref-Ricci}.
We use $e_g(t) = g_h(t) - g_h^*(t) \in \Sigma_h$ and $e_{\kappa}(t) = \kappa_h(t) - \kappa_h^*(t) \in V_h$ to denote the finite element error functions.  
The approximations $ g_h^*(t)$ and $\kappa_h^*(t)$ satisfy the numerical schemes \eqref{eq:num-Ric} and \eqref{eq:num-Ric-L2} up to { the consistency defect functions}  $ d_g(t), d_{L^2,g}(t) \in \Sigma_h$ and $ d_\kappa(t) \in V_h$, i.e.,  
\begin{subequations}\label{eq:defect-fun}  
\begin{align}
\label{eq:defect-g} 
\partial_t g_h^*(t) + 2P_{g_h^*(t)}( ( \kappa_h^*(t) - \bar \kappa_h) g_h^*(t)) &= d_g(t), \\ 
\label{eq:defect-g-L2} 
\partial_t g_h^*(t) + 2\Pi_{g_h^*(t)}( ( \kappa_h^*(t) - \bar \kappa_h) g_h^*(t)) &= d_{L^2,g}(t), \\ 
\label{eq:defect-kappa} 
\hspace{-8mm}
\frac{\mathrm{d}}{\mathrm{d} t} ( \kappa_h^*(t), \phi_h )_{g_h^*(t), M_h} + 
( \nabla_{g_h^*} \kappa_h^*(t), \nabla_{g_h^*} \phi_h )_{g_h^*(t), M_h} &= 
( d_{\kappa}(t), \phi_h )_{g_h^*(t), M_h}, 
\end{align}
\end{subequations}
{for all $\phi_h \in V_h$. } 
Let $\gs(t) \in \mathbb{R}^{{\rm dim} \Sigma_h}$ and $\kas(t), \ekappa(t),\dkappa(t) \in \mathbb{R}^{{\rm dim} V_h}$ be the vector representations of $g_h^*(t) \in \Sigma_h$ and $\kappa_h^*(t), e_{\kappa}(t), d_\kappa(t) \in V_h$, respectively. 
The matrix-vector formulation of the {consistency defect}  equation \eqref{eq:defect-kappa} is given by: 
\[
    \frac{\mathrm{d}}{\mathrm{d} t} \big( \M[\gs(t)] \ka^*(t) \big) + \A[\gs(t)] \ka^*(t) = 
    \M[\gs(t)] \dkappa(t).
\]
 Substituting this into \eqref{eq:DAE-kappa}, we obtain the error equation of $e_\kappa$:
\begin{equation}\label{eq:error-kappa} 
\begin{aligned}
    \frac{\mathrm{d}}{\mathrm{d} t}\big( \M[ \g(t)] {\bf e_\kappa}(t) \big) + 
    \A[  \g(t) ] {\bf e_\kappa}(t) &= 
    \frac{\mathrm{d}}{\mathrm{d} t}\big( ( \M[\gs(t)] - \M[\g(t)] )\kas(t) \big)  \\ 
    &+ \big( \A[\gs(t)] - \A[\g(t)] \big){\ka^*}(t)  - \M[\gs(t)] \dkappa.     
\end{aligned}
\end{equation}
Similarly,  subtracting \eqref{eq:defect-g} from \eqref{eq:num-Ric-g} and \eqref{eq:defect-g-L2} from \eqref{eq:num-Ric-L2-g},
the error equations of $e_g$ for schemes \eqref{eq:num-Ric} and \eqref{eq:num-Ric-L2} read 
\begin{subequations}
\begin{align}
    \label{eq:error-g} 
        \hspace{-5mm}
    \partial_t e_g + &2P_{g_h}(e_{\kappa} g_h) 
    + 2P_{g_h}((\kappa_h^* - \bar \kappa_h)e_g)
    + 2(P_{g_h} - P_{g_h^*})( ( \kappa_h^* - \bar \kappa_h) g_h^*) + d_g = 0, \\
    \label{eq:error-g-L2} 
        \hspace{-5mm}
    \partial_t e_g + &2\Pi_{g_h}(e_{\kappa} g_h) 
    + 2\Pi_{g_h}((\kappa_h^* - \bar \kappa_h)e_g)
    + 2(\Pi_{g_h} - {\Pi}_{g_h^*})( ( \kappa_h^* - \bar \kappa_h) g_h^*) + d_{L^2, g} = 0.
\end{align}
\end{subequations}
The stability analysis will be based on the error equations in \eqref{eq:error-g}, \eqref{eq:error-g-L2} and \eqref{eq:error-kappa}.

\subsection{Preparation for stability estimate}

Let \( p > 2 \) be a fixed number. We first consider the \( L^p \) stability estimate for the scheme \eqref{eq:num-Ric} with arbitrary polynomial orders \( r \geq 0 \) and \( q \geq 1 \).
Let \( t^* \in (0, T] \) be the supremum of time such that the following estimate holds (with a coefficient of 1 on the right-hand side):
\begin{equation}\label{eq:bound-e_g-e_k} 
\| e_g \|_{L^{\infty}(0,t^*;L^p(M_h))} + 
\| e_\kappa \|_{L^{\infty}(0,t^*;L^2(M_h))} \leq 
h^{\frac{1}{2} + \frac{1}{p}}.
\end{equation} 
Since \( e_g(0) = e_\kappa(0) = 0 \) and the semidiscrete finite element solutions are continuous in time, it follows that \( t^* > 0 \). Ultimately, our proof will be completed by showing that \( t^* = T \).
The bound in \eqref{eq:bound-e_g-e_k} ensures the boundedness of the numerical solutions of \eqref{eq:num-Ric} on the time interval \( [0, t^*] \). For instance, by utilizing the inverse estimate and the boundedness in \eqref{eq:bound-g*}, we obtain  
\[ \| g_h \|_{L^{\infty}(0,t^*;L^{\infty}(M_h))} \lesssim 
\| g_h^* \|_{L^{\infty}(0,t^*;L^{\infty}(M_h))} + 
\| e_g \|_{L^{\infty}(0,t^*;L^p(M_h))} h^{-\frac{2}{p}} \lesssim
1 + h^{\frac{1}{2}-\frac{1}{p}}.
\]
Since $ \frac{1}{2}-\frac{1}{p}> 0 $, combining \eqref{eq:inv-det}, for sufficiently small $h$, we obtain the boundedness of $g_h(t)$, 
\begin{equation}\label{eq:bound-gh} 
    \| g_h \|_{L^{\infty}(0,t^*;L^{\infty}(M_h))} + 
    \| g_h^{-1} \|_{L^{\infty}(0,t^*;L^{\infty}(M_h))} + 
    \| \sqrt{\det{g_h}} \|_{L^{\infty}(0,t^*;L^{\infty}(M_h))} \leq C.
\end{equation} 
Moreover, for another fixed number $\tilde q$ satisfying $2<\tilde q<\frac{4p}{p-2}$, using an inverse estimate and boundedness in \eqref{eq:bound-k*}, it holds that
\[ \| \kappa_h \|_{L^{\infty}(0,t^*;L^{\tilde q}(M_h))} \lesssim 
\| \kappa_h^* \|_{L^{\infty}(0,t^*;L^{\tilde q}(M_h))} + 
\| e_{\kappa} \|_{L^{\infty}(0,t^*;L^{2}(M_h))}h^{-1 + \frac{2}{\tilde q}} \lesssim 
1 + h^{ \frac{1}{p} - \frac{1}{2} +  \frac{2}{\tilde q} }.  \]
Since $\frac{1}{p} - \frac{1}{2} +  \frac{2}{\tilde q} = \frac{4p +2{\tilde q} - p{\tilde q} }{2p{\tilde q}} > 0 $, 
for sufficiently small $h$, we obtain the boundedness of $\kappa_h(t)$,  
\begin{equation}\label{eq:bound-kh} 
    \| \kappa_h \|_{L^{\infty}(0,t^*;L^{\tilde q}(M_h))} \leq C, \quad \text{ where }
    2 < \tilde q < \frac{4p}{p-2}.
\end{equation} 
By a similar argument as in Lemma \ref{lm:norm-equ} (Norm equivalence of $g_h^*$), a direct application of the boundedness results in \eqref{eq:bound-g*} and \eqref{eq:bound-gh} lead to the following lemma.
\begin{lemma}[Norm equivalence of $g_h$ ]\label{lm:norm-equi-gh} 
Under the boundedness results in \eqref{eq:bound-g*} and \eqref{eq:bound-gh},  it holds that for $t \in [0,t^*]$ and $s \in [2, \infty]$,  
\begin{subequations}\label{eq:gh-g*-equi}
    \begin{align}
        \label{eq:gh-g*-equi-Ls} 
        \|v^{(l)}\|_{L^s(\Man)} \eqsim \|v\|_{L^s(M_h)} &\eqsim \|v\|_{L^s(g_h^*, M_h)} \eqsim \|v\|_{L^s(g_h, M_h)}, \quad 
        &&\forall  v \in L^s(M_h), \\ 
        \label{eq:gh-g*-equi-H1}
        \hspace{-2mm}
        \|\nabla v^{(l)}\|_{L^2(\Man)} \eqsim \| \nabla v \|_{L^2(M_h)} &\eqsim \| \nabla_{g_h^*} v \|_{L^2(g_h^*, M_h)} \eqsim \| \nabla_{g_h} v \|_{L^2(g_h, M_h)}, \quad 
        &&\forall  v \in H^1(M_h).\\ 
    \intertext{Moreover, for tensor fields $\sigma, \rho \in \Sigma$, it holds that for $t \in [0,t^*]$, }
    \label{eq:gh-g*-inner-tensor}    
    |(\sigma, \rho)_{g_h^*,M_h} -  (\sigma, \rho)_{g_h,M_h} | &\lesssim 
    \|g_h^* - g_h\|_{L^2(M_h)} \|\sigma\|_{L^{\infty}(M_h)}\|\rho\|_{L^2(M_h)}, 
    \\ \label{eq:gh-g*-equi-tensor}
     \| \sigma \|_{L^2(M_h)} &\eqsim \| \sigma \|_{L^2(g_h^*(t), M_h)} \eqsim \| \sigma\|_{L^2(g_h(t), M_h)}.
    \end{align}
\end{subequations}
\end{lemma}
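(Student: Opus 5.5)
The argument is entirely elementwise: every quantity is an integral over $M_h=\bigcup_{K\in\mathcal{T}_h}K$ whose integrand involves only the matrix fields $g_h^*$, $g_h$, their inverses and $\sqrt{\det}$. The idea is to compare these with the Euclidean quantities using the uniform pointwise bounds \eqref{eq:bound-g*} and \eqref{eq:bound-gh}. These bounds give uniform upper and lower bounds on the eigenvalues of the symmetric matrices $g_h^*(x)$ and $g_h(x)$ for a.e.\ $x\in M_h$ and $t\in[0,t^*]$. Indeed, $\|g\|_{L^\infty}\le C$ bounds the largest eigenvalue, and $\|g^{-1}\|_{L^\infty}\le C$ bounds the reciprocal of the smallest. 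Consequently $\sqrt{\det g}\eqsim 1$ pointwise for $g\in\{g_h^*,g_h\}$, and the quadratic forms $\xi\mapsto g^{-1}\xi\cdot\xi$ are uniformly equivalent to $|\xi|^2$.

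\textbf{Scalar norms \eqref{eq:gh-g*-equi-Ls}--\eqref{eq:gh-g*-equi-H1}.} For $s<\infty$ we have
\[
\|v\|_{L^s(g,M_h)}^s=\int_{M_h}|v|^s\sqrt{\det g}\,\mathrm{d}x\eqsim\|v\|_{L^s(M_h)}^s .
\]
For $s=\infty$ the volume form is irrelevant, and the three norms coincide up to null sets. The comparison with $\|v^{(l)}\|_{L^s(\Man)}$ is exactly the argument of Lemma \ref{lm:norm-equ}, now with $s$ in place of $2$: on each $\tilde K=a(K)$, change variables through $a$. The Jacobian of $a$ (the square root of the determinant of $a^*(i_\Man^*\delta_{\mathbb{R}^3})$ relative to the Euclidean metric on $K$) is bounded above and below uniformly for small $h$, by standard geometric estimates for the lift. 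The gradient statement is analogous: the integrand $g^{-1}\nabla v\cdot\nabla v\sqrt{\det g}$ is pointwise comparable to $|\nabla v|^2$. For the lift, $\nabla v^{(l)}$ pulls back via $a$ to $\nabla v$ measured in the metric $a^*(i_\Man^*\delta_{\mathbb{R}^3})$, which is uniformly equivalent to the Euclidean one.

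\textbf{Tensor estimates \eqref{eq:gh-g*-inner-tensor}--\eqref{eq:gh-g*-equi-tensor}.} Write
\[
(\sigma,\rho)_{g,M_h}=\int_{M_h}\Tr(g^{-1}\sigma g^{-1}\rho)\sqrt{\det g}\,\mathrm{d}x .
\]
The equivalence \eqref{eq:gh-g*-equi-tensor} follows from the pointwise bound $\Tr(g^{-1}\sigma g^{-1}\sigma)\eqsim\Tr(\sigma\sigma)$. To see this, write $\Tr(g^{-1}\sigma g^{-1}\sigma)=|g^{-1/2}\sigma g^{-1/2}|_F^2$, which is bounded above and below by the extreme eigenvalues of $g^{-1}$ squared times $|\sigma|_F^2$.

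For \eqref{eq:gh-g*-inner-tensor}, subtract the two integrands and telescope, with $g_1=g_h^*$ and $g_2=g_h$:
\[
\Tr(g_1^{-1}\sigma g_1^{-1}\rho)\sqrt{\det g_1}-\Tr(g_2^{-1}\sigma g_2^{-1}\rho)\sqrt{\det g_2}.
\]
This splits into three terms, carrying respectively $(g_1^{-1}-g_2^{-1})$ in the first slot, $(g_1^{-1}-g_2^{-1})$ in the second slot, and $(\sqrt{\det g_1}-\sqrt{\det g_2})$. All remaining factors are bounded in $L^\infty$ by \eqref{eq:bound-g*} and \eqref{eq:bound-gh}. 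The differences are controlled pointwise by $|g_1-g_2|$: use $g_1^{-1}-g_2^{-1}=g_1^{-1}(g_2-g_1)g_2^{-1}$ together with the pointwise version of \eqref{eq:inv-det-b}. This gives the integrand bound $\lesssim|g_h^*-g_h|\,|\sigma|\,|\rho|$. Hölder's inequality with exponents $(2,\infty,2)$ then yields the stated bound.

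\textbf{Main obstacle.} The only delicate point is that \eqref{eq:inv-det-a} as stated requires a smallness condition on $\|\tilde g-g\|_{L^\infty}$. I avoid this by using the resolvent identity above, which needs only boundedness of both inverses, and that is available on $[0,t^*]$ via \eqref{eq:bound-gh}. The lift comparison with $\Man$ is routine and inherited from Lemma \ref{lm:norm-equ}.
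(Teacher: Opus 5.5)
Your proposal is correct and follows essentially the same route as the paper. The scalar and tensor norm equivalences come, as in Lemma \ref{lm:norm-equ}, from pointwise two-sided bounds on $g_h^*$, $g_h$, their inverses and $\sqrt{\det}$. The estimate \eqref{eq:gh-g*-inner-tensor} comes from comparing the two integrands term by term and applying H\"older with exponents $(2,\infty,2)$. Your use of $g_1^{-1}-g_2^{-1}=g_1^{-1}(g_2-g_1)g_2^{-1}$ in place of \eqref{eq:inv-det-a} is only a cosmetic variant: on $[0,t^*]$ the smallness condition in \eqref{eq:inv-det-a} holds anyway, since $\|e_g\|_{L^\infty(M_h)}\lesssim h^{1/2-1/p}$. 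It does, however, have the small merit of relying only on the stated boundedness hypotheses.
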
 
\begin{proof}
{    
Under the boundedness results in \eqref{eq:bound-g*} and \eqref{eq:bound-gh}, 
the norm equivalence results in \eqref{eq:gh-g*-equi-Ls}, \eqref{eq:gh-g*-equi-H1}, and \eqref{eq:gh-g*-equi-tensor} can be proven 
by a similar argument as in Lemma \ref{lm:norm-equ} (Norm equivalence of $g_h^*$) and are therefore omitted.
For the proof of \eqref{eq:gh-g*-inner-tensor}, we have
    \[ 
    \begin{aligned}
        (\sigma, \rho)_{g_h^*, M_h} &= \int_{M_h} {\rm Tr}((g_h^*)^{-1}\sigma (g_h^*)^{-1}\rho) \sqrt{\det g_h^*}\mathrm{d}x,
    \\
    (\sigma, \rho)_{g_h, M_h} &= \int_{M_h} {\rm Tr}((g_h)^{-1}\sigma (g_h)^{-1}\rho) \sqrt{\det g_h}\mathrm{d}x. 
    \end{aligned}
    \]
Comparing the above expressions and invoking the boundedness results in \eqref{eq:bound-g*} and \eqref{eq:bound-gh}, together with \eqref{eq:inv-det}, we obtain, for \(t\in[0,t^*]\),
\[ 
\begin{aligned}
    & \quad|(\sigma, \rho)_{g_h^*(t), M_h} - (\sigma, \rho)_{g_h(t), M_h}| \\ 
        &\lesssim 
    \|\sigma\|_{L^\infty(M_h)} \|\rho\|_{L^2(M_h)} 
    \big( \|\sqrt{\det g_h^*} - \sqrt{\det g_h}\|_{L^2(M_h)} + 
    \|(g_h^*)^{-1} - (g_h)^{-1}\|_{L^2(M_h)}
    \big) \\ 
    & \lesssim 
    \|\sigma\|_{L^\infty(M_h)} \|\rho\|_{L^2(M_h)} \| g_h^* - g_h \|_{L^2(M_h)}. 
\end{aligned}
\]
}
\end{proof} 

Furthermore,  we can establish the following stability and approximation results for the projection operators defined in \eqref{eq:Regge-interpolation} and \eqref{eq:L2-projection}. The proof of these two lemmas are postponed to Appendix \ref{sec:P-Stab}. 
\begin{lemma}[Stability of $P_{g_h}$ and $P_{g_h^*}$]\label{lm:stab-Pgh-Pgh*} 
{Under the boundedness results in \eqref{eq:bound-g*} and \eqref{eq:bound-gh}, for any  triangle \(K \in \mathcal{T}_h\), let $\sigma$ be a smooth symmetric $(0,2)$-tensor field on $K$.  Then it holds that 
\begin{subequations}
\begin{align}
    \label{eq:stab-inf}
    & \| P_{g_h(t)} \sigma \|_{L^{\infty}(K)} \lesssim \| \sigma \|_{L^{\infty}(K)} , \quad t \in [0,t^*],
    \quad 
    \| P_{g_h^*(t)} \sigma \|_{L^{\infty}(K)} \lesssim \| \sigma \|_{L^{\infty}(K)} , \quad t \in [0,T]. \\ 
\intertext{Moreover, if $\sigma$ is a polynomial symmetric $(0,2)$-tensor field on $K$, it holds that, for $2 \leq s \leq +\infty$,}
    \label{eq:stab-Ls}
    & \| P_{g_h(t)} \sigma \|_{L^{s}(K)} \lesssim \| \sigma \|_{L^{s}(K)} , \quad t \in [0,t^*],
    \quad 
    \| P_{g_h^*(t)} \sigma \|_{L^{s}(K)} \lesssim \| \sigma \|_{L^{s}(K)} , \quad t \in [0,T], \\  
    \label{eq:stab-Pgh-Pgh*}    
    & \| (P_{g_h^*(t)} - P_{g_h(t)}) \sigma \|_{L^s(K)} \lesssim 
\| g_h(t) - g_h^*(t) \|_{L^s(K)} \|\sigma\|_{L^{\infty}(K)}, \quad t \in [0,t^*].
\end{align}
\end{subequations}
} 
\end{lemma}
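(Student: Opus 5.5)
The plan is to work elementwise on a reference triangle: write $P_g$ as a finite-dimensional linear solve whose coefficients depend continuously on $g$, and then use the bounds \eqref{eq:bound-g*} and \eqref{eq:bound-gh} to control those coefficients uniformly.

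\emph{Setup.} Fix $K$ and let $F_K:\hat K\to K$ be the affine map from the reference triangle, with $\hat g = F_K^*g$. The defining conditions \eqref{eq:Regge-interpolation} are invariant under pullback, and since they act edgewise and elementwise, $P_g\sigma|_K$ depends only on $g|_K$ and $\sigma|_K$ (the tangential-tangential traces on edges are single valued). On $\hat K$, the map $\hat\sigma\mapsto \widehat{P_g\sigma}$ is the unique element of $P_rS_2^0(\hat K)$ whose degrees of freedom are the functionals
\[
\ell_{e,v}(\rho)=\int_e \rho(\tau_g,\tau_g)\,v\,\mathrm{d}s_g, \qquad m_{\rho'}(\rho)=(\rho,\rho')_{g,\hat K}.
\]
These are the weighted Regge DOFs; the weights are $g(\tau,\tau)^{-1/2}$ on edges and the factors $g^{-1},\sqrt{\det g}$ in the interior.

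\emph{Unisolvence and uniformity.} For $g=\delta$ these functionals are exactly the canonical Regge DOFs of \cite{li2018regge}, so they are unisolvent there. For general $g$ I will prove unisolvence directly. Suppose $\rho\in P_rS_2^0$ annihilates all of them. Each $\rho(\tau,\tau)|_e$ then lies in $P_r(e)$ and is orthogonal to $P_r(e)$ in a positive weighted inner product, so it vanishes. By the known Regge bubble characterization, $\rho=\sum_i b_i\,\lambda_j\lambda_k\,\mathrm{d}\lambda_j\odot\mathrm{d}\lambda_k$ with $b_i\in P_{r-1}$. Testing the interior conditions against $\rho'$ chosen as the weighted dual, namely $g\,\cdot\,(\text{bubble coefficient form})\,\cdot\,g$ projected appropriately, gives $\rho=0$.

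This weighted step is the main obstacle. The test space $P_{r-1}S_2^0$ contains $g$-dependent objects only after a projection, and the positivity argument needs care. My first attempt will be to argue that the Gram-type matrix $B(g)$ of the linear system depends continuously on $g$, and then invoke a perturbation argument. That only works when $g$ is close to a constant metric on $K$. This holds here: $g_h^*$ is elementwise smooth with bounded derivatives, and $g_h$ is within $o(1)$ in $L^\infty$ of $g_h^*$ by the inverse estimate (as in deriving \eqref{eq:bound-gh}). On top of that, the freezing of a constant positive definite metric can be handled exactly by a linear change of variables, which reduces to the Euclidean case with an affine-equivalent triangle of bounded shape. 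Combining the frozen case with a perturbation estimate then gives $\|B(g)^{-1}\|\le C$ uniformly for $t\in[0,t^*]$ and $h\le h_0$.

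\emph{The estimates.} Given the uniform inverse bound, \eqref{eq:stab-inf} follows from
\[
|\ell_{e,v}(\sigma)|+|m_{\rho'}(\sigma)|\lesssim \|\hat\sigma\|_{L^\infty(\hat K)},
\]
together with the equivalence of norms on the finite-dimensional space $P_rS_2^0(\hat K)$ and scaling back to $K$; the $L^\infty$ norm of the matrix components transforms with the same $h$-factors on both sides. For polynomial $\sigma$ of fixed degree, all norms on $\hat K$ are equivalent, so $\|\hat\sigma\|_{L^\infty}\eqsim\|\hat\sigma\|_{L^s}$. Scaling then yields \eqref{eq:stab-Ls} with the same $|K|^{1/s}$ factor on both sides.

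For \eqref{eq:stab-Pgh-Pgh*}, write
\[
(P_{g_h^*}-P_{g_h})\sigma = B(g_h)^{-1}\bigl(B(g_h)-B(g_h^*)\bigr)B(g_h^*)^{-1}\ell(\sigma)+B(g_h)^{-1}\bigl(\ell_{g_h^*}-\ell_{g_h}\bigr)(\sigma).
\]
Both differences involve $g_h^{-1}-(g_h^*)^{-1}$, $\sqrt{\det g_h}-\sqrt{\det g_h^*}$, and $g(\tau,\tau)^{\pm1/2}$ differences, and by \eqref{eq:inv-det} each of these is pointwise bounded by $|g_h-g_h^*|$. The entries are therefore bounded by $\|\sigma\|_{L^\infty}\int|g_h-g_h^*|$ over edges and $\hat K$. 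Since $g_h-g_h^*$ is polynomial, the edge integrals are controlled by $\|g_h-g_h^*\|_{L^s(\hat K)}$ through norm equivalence, and scaling to $K$ completes the proof.
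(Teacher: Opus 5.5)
Your proposal is correct, and its architecture matches the paper's. The paper also proves a DOF norm equivalence on the reference element (Lemma~\ref{lm:norm-equi-Regge}). It deduces \eqref{eq:stab-inf} and \eqref{eq:stab-Ls} from $\mathcal{N}_{\alpha,g}(P_g\sigma)=\mathcal{N}_{\alpha,g}(\sigma)$ together with scaling. It proves \eqref{eq:stab-Pgh-Pgh*} via the splitting $\mathcal{N}_{\alpha,g_h^*}(\sigma)-\mathcal{N}_{\alpha,g_h}(\sigma)+\mathcal{N}_{\alpha,g_h}(P_{g_h}\sigma)-\mathcal{N}_{\alpha,g_h^*}(P_{g_h}\sigma)$, which is your identity for $B(g_h)^{-1}-B(g_h^*)^{-1}$ written in DOF form.

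The real difference is how the uniform lower bound in that norm equivalence is obtained. The paper asserts it from finite-dimensional norm equivalence and the bounds \eqref{eq:hatg}. Boundedness of $\hat g_h$, however, only gives the upper bound. The lower bound needs two further facts: unisolvence of the $g$-weighted interior moments, which is not automatic when $g$ varies on $K$, and uniformity over all admissible metrics.

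Your freezing argument supplies exactly these. For constant $g_0$, the weighted functionals span the same space as the Euclidean ones, so in fact $P_{g_0}=P_h$ on $K$. A perturbation of size $\mathrm{osc}_K\, g_h^* + \|e_g\|_{L^\infty} \lesssim h + h^{1/2-1/p}$ then keeps $\|B(g)^{-1}\|$ uniformly bounded.

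The price is that you use the bootstrap bound \eqref{eq:bound-e_g-e_k} (or \eqref{eq:bound-e_g-e_k-L2}) and the smoothness of $g$, not merely \eqref{eq:bound-gh} as the lemma's hypotheses state. Since the lemma is only applied on $[0,t^*]$, this is harmless, and it makes your argument complete at the point where the paper's is terse.

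One side correction: the bubble characterization in your first sketch is wrong. The field $b_i\lambda_j\lambda_k\,\mathrm{d}\lambda_j\odot\mathrm{d}\lambda_k$ has degree $r+1$, and its tangential-tangential trace does not vanish on the edge $[j,k]$. The Regge bubbles are $\sum_i b_i\lambda_i\,\mathrm{d}\lambda_j\odot\mathrm{d}\lambda_k$ with $b_i\in P_{r-1}$ and $\{i,j,k\}=\{0,1,2\}$. The perturbation argument already gives unisolvence with uniform constants, so you can drop that sketch entirely.
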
 

\begin{lemma}[Approximation of $P_{g_h^*}$ and $\Pi_{g_h^*}$]\label{lm:app-Projection} 
    {Under the boundedness results in \eqref{eq:bound-g*}, let $\sigma \in \Sigma$ be a 
     piecewise smooth symmetric $(0,2)$-tensor field on $M_h$.  Then  it holds that  
    \begin{equation}\label{eq:app-Projection} 
    \| P_{g_h^*(t)} \sigma - \sigma \|_{L^\infty(M_h)} \lesssim h^{r+1}, \quad \text{and} \quad
    \| \Pi_{g_h^*(t)} \sigma - \sigma \|_{L^2(M_h)} \lesssim h^{r+1},
    \quad t \in [0,T].
    \end{equation} } 
\end{lemma}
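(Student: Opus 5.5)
The plan is to handle the two projections separately. In both cases we compare with the Euclidean-metric operators, the canonical Regge interpolation $P_h$ and a Euclidean $L^2$ projection, whose approximation properties are classical. The key structural fact is that $g_h^*(t)$ is a Regge field with uniform bounds \eqref{eq:bound-g*}: it is piecewise polynomial of degree $r$ on each triangle, its tangential-tangential trace is single-valued on edges, and $g_h^*$, $(g_h^*)^{-1}$ and $\sqrt{\det g_h^*}$ are bounded uniformly in $h$ and $t$. Throughout, $\sigma$ is smooth on each $K$ with $W^{r+1,\infty}(K)$ norms bounded uniformly; in the applications $\sigma=(\kappa_h^*-\bar\kappa_h)g_h^*$ and similar fields.

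For $P_{g_h^*}$ I would first show that it reproduces $\Sigma_h$. On each edge $e$, the weight in $\langle\cdot,\cdot\rangle_{g,e}$ is $\sqrt{g(\tau,\tau)}$, and $\sigma(\tau_g,\tau_g)=\sigma(\tau,\tau)/g(\tau,\tau)$. Condition \eqref{eq:Regge-interpolation-a} therefore reads
\[
\int_e \bigl(P_g\sigma-\sigma\bigr)(\tau,\tau)\, g(\tau,\tau)^{-1/2}\, v\,\mathrm{d}s=0,\qquad \forall v\in P_r(e).
\]
Similarly, \eqref{eq:Regge-interpolation-b} is a weighted $L^2$ moment condition against $g^{-1}\rho g^{-1}\sqrt{\det g}$, $\rho\in P_{r-1}S^0_2(K)$. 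Unisolvence with these weighted moments follows on the reference element by scaling. The weights are bounded above and below and have bounded derivatives in $h$-scaled coordinates, so they are perturbations of the Euclidean weights at the reference level. Hence $P_{g_h^*}$ is a well-defined projection onto $\Sigma_h$, it satisfies $P_{g_h^*}\chi=\chi$ for every $\chi\in\Sigma_h$, and it is elementwise $L^\infty$-stable, which is \eqref{eq:stab-inf}.

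I would then take $\chi=P_h\sigma$ and write $P_{g_h^*}\sigma-\sigma=P_{g_h^*}(\sigma-P_h\sigma)-(\sigma-P_h\sigma)$. Combining $L^\infty$ stability with \eqref{eq:prop-g_h^*-b} (for $k=0$, $p=\infty$, applied to $\sigma$) gives $\|P_{g_h^*}\sigma-\sigma\|_{L^\infty(K)}\lesssim \|\sigma-P_h\sigma\|_{L^\infty(K)}\lesssim h^{r+1}$. Since everything is local, taking the maximum over $K$ yields the first estimate in \eqref{eq:app-Projection}. One point needs care: for the $L^\infty$ stability, the constant in the reference-element unisolvence must be uniform. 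I would ensure this by noting that the pulled-back weight depends continuously on the metric, and that the metrics range over a compact set of positive definite forms.

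For $\Pi_{g_h^*}$ the argument is the best-approximation property in the weighted norm. $\Pi_{g_h^*}$ is the orthogonal projection in the inner product $(\cdot,\cdot)_{g_h^*,M_h}$, so $\|\Pi_{g_h^*}\sigma-\sigma\|_{L^2(g_h^*,M_h)}\le \|P_h\sigma-\sigma\|_{L^2(g_h^*,M_h)}$. By the tensor norm equivalence \eqref{eq:gh-g*-equi-tensor}, which relies only on \eqref{eq:bound-g*}, this transfers to the Euclidean $L^2(M_h)$ norm. Then \eqref{eq:prop-g_h^*-b} with $k=0$, $p=2$ gives $h^{r+1}$.

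I expect the main obstacle to be the uniform unisolvence and stability of the weighted-moment Regge projection on each element. Everything else is a short quasi-optimality argument. I would handle this obstacle by a perturbation argument: on the reference element, compare the weighted degrees-of-freedom matrix with the unweighted one, or more robustly use that positive weights preserve unisolvence of these moment functionals, and bound the inverse uniformly by compactness.
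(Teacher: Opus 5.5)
Your proposal is correct and follows essentially the paper's proof. For $P_{g_h^*}$ you both write $P_{g_h^*}\sigma-\sigma=P_{g_h^*}(\sigma-P_h\sigma)-(\sigma-P_h\sigma)$, using $P_{g_h^*}P_h\sigma=P_h\sigma$ and the $L^\infty$ stability \eqref{eq:stab-inf}; for $\Pi_{g_h^*}$ your best-approximation step is equivalent to the paper's $L^2(g_h^*,M_h)$-stability plus reproduction of $P_h\sigma$, combined with the norm equivalence \eqref{eq:gh-g*-equi-tensor}. One small correction to your side remark on uniform unisolvence: compare the weights with those of the frozen constant metric $G=g_h^*(x_K)$ on each element, which differs from $g_h^*$ by $O(h)$ on $K$, not with the Euclidean weights; positivity of the weight alone settles only the edge moments, while interior unisolvence for constant $G$ follows because $\rho\mapsto G^{-1}\rho G^{-1}$ is a bijection of $P_{r-1}S^0_2(K)$ (the paper takes this stability for granted via Lemma \ref{lm:norm-equi-Regge}).
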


\begin{remark}\label{rmk:L2-bound}
{\upshape 
For the $L^2$ stability estimate of both schemes \eqref{eq:num-Ric} and \eqref{eq:num-Ric-L2} with $r \geq 1$ and $q \geq 1$, we replace the bound in \eqref{eq:bound-e_g-e_k} by:
\begin{equation}\label{eq:bound-e_g-e_k-L2} 
    \| e_g \|_{L^{\infty}(0,t^*;L^2(M_h))} + 
    \| e_\kappa \|_{L^{\infty}(0,t^*;L^2(M_h))} \leq 
    h^{1.5}.
\end{equation} 
Then, by the same argument, the finite element solutions given by schemes \eqref{eq:num-Ric} and \eqref{eq:num-Ric-L2} with $r \geq 1$ and $q \geq 1$ satisfy
\begin{subequations}\label{eq:bound-gh-kh-L2}  
\begin{align}
\label{eq:bound-gh-L2} 
\hspace{-4mm}
\| g_h \|_{L^{\infty}(0,t^*;L^{\infty}(M_h))} + 
\| (g_h)^{-1} \|_{L^{\infty}(0,t^*;L^{\infty}(M_h))} + 
\| \sqrt{\det{g_h}} \|_{L^{\infty}(0,t^*;L^{\infty}(M_h))} \leq C,\\
\label{eq:bound-kh-L2} 
\| \kappa_h \|_{L^{\infty}(0,t^*;L^{2}(M_h))} \leq C. 
\end{align}
\end{subequations}
It is easy to see that the results of Lemma \ref{lm:norm-equi-gh}, Lemma \ref{lm:stab-Pgh-Pgh*}, and Lemma \ref{lm:app-Projection} remain valid.
}
\end{remark}

As a direct application of Lemma \ref{lm:stab-Pgh-Pgh*} and Lemma \ref{lm:app-Projection}, the following bounds for the {consistency defect}  functions hold.
\begin{lemma}[{Consistency defect estimates} ]\label{lm:defect} 
    Under the conditions of Theorem \ref{thm:main}, let $d_g, d_{L^2,g} \in \Sigma_h$ and $d_{\kappa} \in V_h$ denote the {consistency defect}  functions defined in  \eqref{eq:defect-g} , \eqref{eq:defect-g-L2} and \eqref{eq:defect-kappa}, respectively. The following estimates hold for $t \in [0,T]$,
\begin{subequations}
\begin{align}
    \label{eq:dg-esti-L2}  
    \|d_{L^2,g}(t)\|_{L^2(M_h)}  + \|d_g(t)\|_{L^2(M_h)}  & \lesssim h^{r+1} + h^{q+1}, \\ 
    \label{eq:dg-esti-Lp} 
    \|d_g(t)\|_{L^p(M_h)} &\lesssim (\ln{\frac{1}{h}}) h^{r+1} + (\ln{\frac{1}{h}})^{{\bar q}}h^{q+1}, \\ 
    \label{eq:esti-dk} 
    \|d_\kappa(t)\|_{L^2(M_h)}& \lesssim h^{r+1} + h^{q+1}.
\end{align}
\end{subequations}    
\end{lemma}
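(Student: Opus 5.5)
The plan is to subtract the exact equations \eqref{eq:Ref-Ricci} from the defect equations \eqref{eq:defect-fun}, so that each defect becomes a sum of interpolation/projection errors controlled by Lemma \ref{lm:app-Ritz}, Lemma \ref{lm:app-Projection} and \eqref{eq:prop-g_h^*}.

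\textbf{Metric defect.} Pull \eqref{eq:Ref-Ricci-a} back to $M_h$ and apply $P_h$. By \eqref{eq:prop-g_h^*-a}, this gives $\partial_t g_h^* = -2P_h((\kappa^{(-l)}-\bar\kappa)g^{(-l)})$. Hence
\[
d_g = 2P_{g_h^*}\big((\kappa_h^*-\bar\kappa_h)g_h^*\big) - 2P_h\big((\kappa^{(-l)}-\bar\kappa)g^{(-l)}\big).
\]
Insert the intermediate term $2P_{g_h^*}((\kappa^{(-l)}-\bar\kappa)g^{(-l)})$ and split $d_g$ into three pieces.
\begin{itemize}
\item[(i)] The first piece is $2P_{g_h^*}$ applied to $(\kappa_h^*-\kappa^{(-l)})g_h^* + (\kappa^{(-l)}-\bar\kappa)(g_h^*-g^{(-l)}) + (\bar\kappa-\bar\kappa_h)g_h^*$.
\item[(ii)] The second piece is $2(P_{g_h^*}-I)\sigma$ with $\sigma=(\kappa^{(-l)}-\bar\kappa)g^{(-l)}$.
\item[(iii)] The third piece is $2(I-P_h)\sigma$.
\end{itemize}
Pieces (ii) and (iii) are $O(h^{r+1})$ in $L^\infty$, by Lemma \ref{lm:app-Projection} and \eqref{eq:prop-g_h^*-b}.

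For piece (i), use the $L^\infty$ stability \eqref{eq:stab-inf} in the $L^p$ case. The Ritz $L^\infty$ error \eqref{eq:app-Ritz-Linf} then gives exactly the logarithmic factors in \eqref{eq:dg-esti-Lp}, and $\|g_h^*-g^{(-l)}\|_{L^\infty}\lesssim h^{r+1}$ handles the middle term. For the $L^2$ bound \eqref{eq:dg-esti-L2}, I would use an $L^2$ stability of $P_{g_h^*}$ on $(\kappa_h^*-\kappa^{(-l)})g_h^*$ together with \eqref{eq:app-Ritz-L2}.

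The main obstacle is that \eqref{eq:stab-Ls} is only stated for polynomial tensors. I would first try writing $\kappa_h^*-\kappa^{(-l)} = (\kappa_h^* - I_h\kappa^{(-l)}) + (I_h\kappa^{(-l)}-\kappa^{(-l)})$, where $I_h$ is the Lagrange interpolant. The first part is polynomial times the polynomial $g_h^*$, so \eqref{eq:stab-Ls} applies, and its $L^2$ norm is $\lesssim h^{q+1}+h^{r+1}$. The second part is $O(h^{q+1})$ in $L^\infty$, so \eqref{eq:stab-inf} applies.

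The constant term needs $|\bar\kappa-\bar\kappa_h|\lesssim h^{r+1}+h^{q+1}$. This follows from comparing $\int_{M_h}\kappa_h^*(0)\sqrt{\det g_h^*(0)}$ with $\int_\Man\kappa(0)\omega_{g(0)}$ (and likewise for the areas), using \eqref{eq:det-inv-app}, \eqref{eq:A-g-g*} and \eqref{eq:app-Ritz-L2}.

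\textbf{$L^2$-projection defect.} The defect $d_{L^2,g}$ is handled identically, with $\Pi_{g_h^*}$ in place of $P_{g_h^*}$. The projection $\Pi_{g_h^*}$ is $L^2$-stable by definition together with the norm equivalence \eqref{eq:gh-g*-equi-tensor} (for $g_h^*$, valid on $[0,T]$). The second bound in \eqref{eq:app-Projection} controls its approximation error.

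\textbf{Curvature defect.} Test \eqref{eq:Ref-Ricci-b} with $\phi_h^{(l)}$. By the definition \eqref{eq:Ritz-a} of the Ritz projection, the stiffness terms cancel exactly. Hence, for all $\phi_h\in V_h$,
\[
(d_\kappa,\phi_h)_{g_h^*,M_h} = \frac{\mathrm{d}}{\mathrm{d}t}\Big[(\kappa_h^*,\phi_h)_{g_h^*,M_h} - (\kappa,\phi_h^{(l)})_{g,\Man}\Big].
\]
Expand the derivative as
\[
(\partial_t\kappa_h^* - (\partial_t\kappa)^{(-l)},\phi_h)_{g_h^*} + (\kappa_h^*-\kappa^{(-l)},\phi_h\,\partial_t\sqrt{\det g_h^*})_{M_h} + \text{geometric terms}.
\]
The geometric terms compare $\sqrt{\det g_h^*}$ with $\sqrt{\det g^{(-l)}}$, and their time derivatives with each other. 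They are $O(h^{r+1})$ by \eqref{eq:det-inv-app}, and by its analogue for $\partial_t$, which follows from \eqref{eq:prop-g_h^*-c}.

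With \eqref{eq:app-Ritz-L2}, including its $\partial_t$ part, and \eqref{eq:bound-g*}, this gives
\[
|(d_\kappa,\phi_h)_{g_h^*}|\lesssim (h^{r+1}+h^{q+1})\|\phi_h\|_{L^2}.
\]
Choosing $\phi_h=d_\kappa$ and using \eqref{eq:g-g*-equi-L2} yields \eqref{eq:esti-dk}.
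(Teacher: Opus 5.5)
Your proposal is correct and follows essentially the same route as the paper. Your piece (iii), $2(I-P_h)\sigma$, is exactly the paper's term $\partial_t g_h^*-\partial_t g^{(-l)}$. For \eqref{eq:dg-esti-Lp} you use $L^\infty$ stability plus the Ritz $L^\infty$ bound; for \eqref{eq:dg-esti-L2} you insert $I_h\kappa^{(-l)}$ and use the polynomial $L^s$ stability; for $d_\kappa$ you use the Ritz cancellation of the stiffness terms. The only differences are cosmetic: you pair $\kappa_h^*-\kappa^{(-l)}$ with $g_h^*$ rather than $g^{(-l)}$, which spares you the extra $L^\infty$ cross term, and you invoke the orthogonality of $\Pi_{g_h^*}$ directly for $d_{L^2,g}$.
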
 
\begin{proof}
    By pulling back the equation \eqref{eq:Ref-Ricci-a} to \( M_h \), we obtain
\[
\partial_t g^{(-l)}(t) + 2\big( \kappa^{(-l)}(t) - \bar \kappa \big) g^{(-l)}(t) = 0.
\]
Substituting this into \eqref{eq:defect-g}, we have:
\[ 
\begin{aligned}
    d_g = &(\partial_t g_h^* - \partial_t g^{(-l)}) +  
    2P_{g_h^*}( ( \kappa_h^* - \bar \kappa_h) (g_h^* -  g^{(-l)})) 
     \\ 
    & + 2P_{g_h^*}( ( \kappa_h^* -  \kappa^{(-l)})g^{(-l)})
    + 2P_{g_h^*}( (\bar \kappa - \bar \kappa_h )  g^{(-l)}) + 
    2(P_{g_h^*} - I)((\kappa^{(-l)} - \bar \kappa ) g^{(-l)}). 
\end{aligned}
\]
Take the \( L^p \) norm (for some \( p > 2 \)) and apply \eqref{eq:stab-inf} in Lemma \ref{lm:stab-Pgh-Pgh*} (Stability of \( P_{g_h} \) and \( P_{g_h^*} \)), Lemma \ref{lm:app-Projection} (Approximation of $P_{g_h^*}$ and $\Pi_{g_h^*}$), \eqref{eq:app-Ritz-Linf} in Lemma \ref{lm:app-Ritz} (Properties of the Ritz projection), \eqref{eq:prop-g_h^*}, the boundedness result in \eqref{eq:bound-g*-k*}, and the easily proven estimate $\| \bar \kappa_h -  \bar \kappa\|_{L^{\infty}(M_h)} \lesssim h^{q+1} + h^{r+1}$.  Then, the following holds for \( t \in [0,T] \):
\[ 
    \begin{aligned}
        \|d_g(t)\|_{L^p(M_h)} & \lesssim 
        \|\partial_t g_h^* - \partial_t g^{(-l)}\|_{L^p(M_h)} + 
        \| (\kappa_h^* - \bar \kappa_h) (g_h^* -  g^{(-l)})\|_{L^{\infty}(M_h)} \\ 
        & + 
        \|(\kappa_h^* -  \kappa^{(-l)}) g^{(-l)}\|_{L^{\infty}(M_h)} 
        + \| (\bar \kappa_h -  \bar \kappa )g^{(-l)}\|_{L^{\infty}(M_h)} + 
        h^{r+1}
        \\ & \lesssim (\ln{\frac{1}{h}}) h^{r+1} + (\ln{\frac{1}{h}})^{{\bar q}}h^{q+1}.
    \end{aligned}
\] 
To estimate the \( L^2 \) norm of \( d_g \), special attention needs to be paid to the third term of \( d_g \),
\[
\hspace{-3mm}
\begin{aligned}
&\quad \| P_{g_h^*}( ( \kappa_h^* -  \kappa^{(-l)})g^{(-l)})  \|_{L^2(M_h)} \\ 
& \lesssim 
\| ( \kappa_h^* -  I_h\kappa^{(-l)})g_h^*\|_{L^2(M_h)} + 
\| (I_h\kappa^{(-l)} - \kappa^{(-l)})g_h^* + ( \kappa_h^* -  \kappa^{(-l)})(g^{(-l)} - g_h^*)\|_{L^{\infty}(M_h)}\\ 
& \lesssim h^{q+1} + h^{r+1},
\end{aligned}
\]
where we have inserted the Lagrange interpolation \( I_h \kappa^{(-l)} \in V_h \) and utilized \eqref{eq:stab-Ls} to obtain the first term, and \eqref{eq:stab-inf} to obtain the second term in the above estimate. Thus, we have for $t \in [0,T]$, 
\[ 
    \begin{aligned}
        \|d_g(t)\|_{L^2(M_h)} & \lesssim 
        \|(\partial_t g_h^* - \partial_t g^{(-l)})\|_{L^2(M_h)} + 
        \|( \kappa_h^* - \bar \kappa_h)\|_{L^{\infty}(M_h)} \|(g_h^* -  g^{(-l)})\|_{L^{\infty}(M_h)} \\ 
        & + \| P_{g_h^*}( ( \kappa_h^* -  \kappa^{(-l)})g^{(-l)})  \|_{L^2(M_h)} +
        \|\bar \kappa_h -  \bar \kappa\|_{L^{\infty}(M_h)}\| g^{(-l)}\|_{L^{\infty}(M_h)} +
        h^{r+1}
        \\ & \lesssim h^{r+1} + h^{q+1}.
    \end{aligned}
\]
By a similar approach, it is straightforward to verify that the {consistency defect}  function \( d_{L^2, g} \) defined in \eqref{eq:defect-g-L2} also satisfies the above estimate.

For $\phi_h \in V_h$, 
we take the test function \( v = \phi_h^{(l)} \in H^1(\Man)\) in equation \eqref{eq:Ref-Ricci-b}, yielding
\[
\frac{\mathrm{d}}{\mathrm{d} t} (\kappa(t), \phi_h^{(l)})_{g(t), \mathcal{M}} 
+ (\nabla_{g(t)} \kappa(t), \nabla_{g(t)} \phi_h^{(l)})_{g(t), \mathcal{M}} = 0.
\]
Substituting this into \eqref{eq:defect-kappa}, we obtain:
\[
\begin{aligned}
(d_\kappa, \phi_h)_{g_h^*(t), M_h} &= \frac{\mathrm{d}}{\mathrm{d} t} (\kappa_h^*(t), \phi_h)_{g_h^*(t), M_h} 
- \frac{\mathrm{d}}{\mathrm{d} t} (\kappa(t), \phi_h^{(l)})_{g(t), \mathcal{M}} \\
&\quad + {(\nabla_{g_h^*}\kappa_h^*(t) , \nabla_{g_h^*(t)} \phi_h)_{g_h^*(t), M_h} 
- (\nabla_{g(t)} \kappa(t) , \nabla_{g(t)} \phi_h^{(l)})_{g(t), \mathcal{M}}}  \\
&= \frac{\mathrm{d}}{\mathrm{d} t} (\kappa_h^*(t), \phi_h)_{g_h^*(t), M_h} 
- \frac{\mathrm{d}}{\mathrm{d} t} (\kappa(t), \phi_h^{(l)})_{g(t), \mathcal{M}}.
\end{aligned}
\]
The last two terms cancel because \( \kappa_h^*(t) = R_h \kappa(t) \), and by the definition of the Ritz projection in \eqref{eq:Ritz-a}. Furthermore, we have: 
\[ \begin{aligned}
    \frac{\mathrm{d}}{\mathrm{d} t}(\kappa_h^*(t), \phi_h)_{g_h^*(t), M_h} &= 
     \int_{M_h} \big(
        (\partial_t\kappa_h^*) \phi_h \sqrt{\det g_h^*} +  
        \kappa_h^* \phi_h (\partial_t \sqrt{\det g_h^*})   
     \big)\mathrm{d}x,
    \\ 
    \frac{\mathrm{d}}{\mathrm{d} t}(\kappa(t), \phi_h^{(l)})_{g(t), \mathcal{M}} &=  \int_{M_h}  \big((\partial_t\kappa^{(-l)}) \phi_h \sqrt{\det g^{(-l)}} + 
    \kappa^{(-l)} \phi_h (\partial_t\sqrt{\det g^{(-l)}})\big)
    \mathrm{d}x.
    \\ 
\end{aligned} \]
Substituting the following identity for the time derivative of the determinant, 
\begin{equation}\label{eq:pt-det} 
    \partial_t \sqrt{\det g_h^*(t)} = \frac{1}{2} \sqrt{\det g_h^*(t)} (g_h^*(t))^{-1}: \partial_t g_h^*(t),
\end{equation}
and applying \eqref{eq:prop-g_h^*}, \eqref{eq:det-inv-app} and Lemma \ref{lm:app-Ritz} (Properties of  Ritz projection) as well as the boundedness result in \eqref{eq:bound-g*-k*}, we have 
\[ (d_\kappa, \phi_h)_{g_h^*(t), M_h} 
\lesssim (h^{r+1} + h^{q + 1})\| \phi_h\|_{L^2(g_h^*(t), M_h)}.  \]
Therefore, \eqref{eq:esti-dk} is proved by taking $\phi_h = d_\kappa$ and applying the norm equivalence in Lemma \ref{lm:norm-equ} (Norm equivalence of $g_h^*$).
\end{proof}

\subsection{Stability estimate}\label{sub-sec:stability}
This subsection establishes the stability estimates for the error equations in \eqref{eq:error-g}, \eqref{eq:error-g-L2}, and \eqref{eq:error-kappa}. 
We begin by considering the $L^p$ stability estimate of $e_g$ for scheme \eqref{eq:num-Ric} with $r \geq 0$ and $q \geq 1$. Recall the error equation \eqref{eq:error-g}, 
\[
\partial_t e_g = - 2P_{g_h}(e_{\kappa} g_h)  
-  2P_{g_h}((\kappa_h^* - \bar \kappa_h)e_g)  
- 2(P_{g_h} - P_{g_h^*})((\kappa_h^* - \bar \kappa_h) g_h^*(t)) - d_g.  
\]
Invoking \eqref{eq:stab-Pgh-Pgh*} in Lemma \ref{lm:stab-Pgh-Pgh*} (Stability of $P_{g_h}$ and $P_{g_h^*}$), we have for $s \in [2, +\infty]$ and $t \in [0,t^*]$
\begin{equation}\label{eq:esti-Pgh-Pgh*} 
\| (P_{g_h} - P_{g_h^*})( ( \kappa_h^* - \bar \kappa_h) g_h^*(t)) \|_{L^s(M_h)} 
\lesssim \| e_g \|_{L^s(M_h)} \|( \kappa_h^* - \bar \kappa_h) g_h^*(t) \|_{L^{\infty}(M_h)}.
\end{equation}  
Taking the $L^2$ norm in the above error equation, by \eqref{eq:esti-Pgh-Pgh*}, Lemma \ref{lm:stab-Pgh-Pgh*} (Stability of $P_{g_h}$ and $P_{g_h^*}$), as well as boundedness results in \eqref{eq:bound-g*-k*} and  \eqref{eq:bound-gh}, we have for $t \in [0, t^{*}]$,
\begin{equation}\label{eq:esti-p_te_g} 
    \begin{aligned}
        \| \partial_t e_g (t)\|_{L^2(M_h)}&  \lesssim 
    \|d_g\|_{L^2(M_h)} 
    + \|P_{g_h}(e_{\kappa} g_h)\|_{{L^2(M_h)}} \\ 
    & + \|P_{g_h}((\kappa_h^* - \bar \kappa_h)e_g)\|_{L^2(M_h)} 
    + \|(P_{g_h} - P_{g_h^*})( ( \kappa_h^* - \bar \kappa_h) g_h^*(t))\|_{L^2(M_h)} \\ 
    & \lesssim 
    \|d_g\|_{L^2(M_h)} + \|e_{\kappa}\|_{{L^2(M_h)}} + \|e_g\|_{L^2(M_h)}.  
    \end{aligned}
\end{equation} 
This, combined with the estimate in \eqref{eq:dg-esti-L2} and the bound in \eqref{eq:bound-e_g-e_k}, implies
\[ 
    \| \partial_t e_g\|_{L^{\infty}(0,t^*;L^2(M_h))} \leq C h^{\frac{1}{2} + \frac{1}{p}}.
\]
Furthermore, by a similar argument used in \eqref{eq:bound-kh}, for sufficiently small $h$, we have
\begin{equation}\label{eq:bound-p_tg_h} 
    \| \partial_t g_h \|_{L^{\infty}(0,t^*;L^{\tilde q}(M_h))} \leq C, \quad \text{ where }
    2 < \tilde q < \frac{4p}{p-2}.
\end{equation} 

\begin{proposition}[$L^p$ stability estimate of $e_g$ for scheme \eqref{eq:num-Ric}]\label{prop:stab-eg} 
    Under the conditions of Theorem \ref{thm:main}, for the scheme in \eqref{eq:num-Ric} with $r \geq 0, q \geq 1$, there
    exists a constant $h_0 > 0$ such that the following stability result holds for 
    $0<h \leq h_0$ and $t \in [0,t^*]$\rm{:} 
    \begin{equation}\label{eq:esti-eg} 
        \begin{aligned}
            \hspace{-2pt}
            \| e_g(t) \|_{L^p(M_h)}^2
    & \lesssim   \int_0^t \|d_g(s) \|^2_{L^p(M_h)} 
    +  \|e_{\kappa}(s) \|^2_{L^2(M_h)}
    + \|\nabla e_{\kappa}(s) \|^2_{L^2(M_h)}.
        \end{aligned}
    \end{equation} 
\end{proposition}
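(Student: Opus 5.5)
The error equation \eqref{eq:error-g} for $e_g$ contains no spatial derivatives of $e_g$, so I would treat it as an ODE in time with values in $L^p(M_h)$. Since $e_g(0)=0$, the plan is to bound $\|\partial_t e_g\|_{L^p(M_h)}$ by $\|d_g\|_{L^p}+\|e_\kappa\|_{H^1}+\|e_g\|_{L^p}$. A Gronwall argument then finishes the proof.

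\textbf{Step 1 (differentiating the norm).} Since $e_g$ is piecewise polynomial and smooth in time, I would use
\[
\tfrac{\mathrm d}{\mathrm dt}\|e_g\|_{L^p(M_h)}^p \le p\,\|e_g\|_{L^p(M_h)}^{p-1}\,\|\partial_t e_g\|_{L^p(M_h)}.
\]
Dividing by $\|e_g\|_{L^p}^{p-2}$ gives $\frac{\mathrm d}{\mathrm dt}\|e_g\|_{L^p}^2\le 2\|e_g\|_{L^p}\|\partial_t e_g\|_{L^p}$. To avoid dividing by zero, I would regularize with $(\|e_g\|_{L^p}^p+\epsilon)^{2/p}$ and let $\epsilon\to0$. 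Alternatively, I would integrate $\|e_g(t)\|_{L^p}\le\int_0^t\|\partial_t e_g\|_{L^p}$ directly.

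\textbf{Step 2 (bounding the right-hand side of \eqref{eq:error-g} in $L^p$).}
\begin{itemize}
\item The term $P_{g_h}(e_\kappa g_h)$: on each $K$, the tensor $e_\kappa g_h$ is polynomial. Hence \eqref{eq:stab-Ls} with $s=p$ applies elementwise, and after summing over $K$ (raising to the $p$-th power) I get $\|P_{g_h}(e_\kappa g_h)\|_{L^p}\lesssim \|e_\kappa\|_{L^p}\|g_h\|_{L^\infty}\lesssim\|e_\kappa\|_{L^p(M_h)}$ by \eqref{eq:bound-gh}.
\item The term $P_{g_h}((\kappa_h^*-\bar\kappa_h)e_g)$: the argument is again polynomial on each $K$. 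Using \eqref{eq:stab-Ls} together with \eqref{eq:bound-k*} and $|\bar\kappa_h|\le C$, it is bounded by $\lesssim\|e_g\|_{L^p}$.
\item The term $(P_{g_h}-P_{g_h^*})((\kappa_h^*-\bar\kappa_h)g_h^*)$: this is bounded by \eqref{eq:esti-Pgh-Pgh*} with $s=p$ and \eqref{eq:bound-g*-k*}, giving $\lesssim\|e_g\|_{L^p}$.
\item The term $d_g$ is kept as it is.
\end{itemize}

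\textbf{Step 3 (main obstacle: $\|e_\kappa\|_{L^p}$).} The right-hand side of \eqref{eq:esti-eg} only contains $L^2$ and $H^1$ norms of $e_\kappa$, so I must control $\|e_\kappa\|_{L^p(M_h)}$ by $\|e_\kappa\|_{L^2}+\|\nabla e_\kappa\|_{L^2}$. This is a Sobolev embedding $H^1\hookrightarrow L^p$ on the piecewise flat surface $M_h$, with a constant independent of $h$. I would obtain it by lifting:
\begin{itemize}
\item By \eqref{eq:gh-g*-equi-Ls} and \eqref{eq:gh-g*-equi-H1}, $\|e_\kappa\|_{L^p(M_h)}\eqsim\|e_\kappa^{(l)}\|_{L^p(\Man)}$.
\item On the fixed smooth surface $\Man$, $H^1(\Man)\hookrightarrow L^p(\Man)$ for every finite $p$ in two dimensions.
\item Transferring back, $\|e_\kappa^{(l)}\|_{H^1(\Man)}\eqsim\|e_\kappa\|_{L^2(M_h)}+\|\nabla e_\kappa\|_{L^2(M_h)}$.
\end{itemize}
This gives $\|e_\kappa\|_{L^p(M_h)}\lesssim\|e_\kappa\|_{L^2(M_h)}+\|\nabla e_\kappa\|_{L^2(M_h)}$. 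The delicate point is that the lift $a$ and its inverse have uniformly bounded derivatives on $M_h$, which is standard for $h\le h_0$.

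\textbf{Step 4 (Gronwall).} Combining Steps 1–3 gives, for $t\in[0,t^*]$,
\[
\tfrac{\mathrm d}{\mathrm dt}\|e_g\|_{L^p}^2\le C\|e_g\|_{L^p}^2+C\big(\|d_g\|_{L^p}^2+\|e_\kappa\|_{L^2}^2+\|\nabla e_\kappa\|_{L^2}^2\big),
\]
where I used Young's inequality on the cross terms. Integrating from $0$ with $e_g(0)=0$ and applying Gronwall's lemma on $[0,t]\subset[0,T]$ yields \eqref{eq:esti-eg}, with a constant $e^{CT}$ independent of $h$ and $t^*$.
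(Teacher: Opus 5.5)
Your proposal is correct and follows essentially the same route as the paper: you bound each term of the error equation \eqref{eq:error-g} in $L^p$ via Lemma~\ref{lm:stab-Pgh-Pgh*} and \eqref{eq:esti-Pgh-Pgh*}, control $\|e_\kappa\|_{L^p}$ by its $H^1$ norm through Sobolev embedding, and close with Gr\"onwall. The differences are cosmetic: the paper works directly with the time-integrated error equation (Gr\"onwall on $\|e_g\|_{L^p}$, then Cauchy--Schwarz in time) rather than differentiating $\|e_g\|_{L^p}^2$, and it simply invokes the $h$-uniform embedding $H^1(M_h)\hookrightarrow L^p(M_h)$, whereas you justify it by lifting to $\Man$.
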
 
\begin{proof}
Integrating \eqref{eq:error-g} from $0$ to $t$, we have 
\[  
    e_g(t) = 
    - 2 \int_0^t P_{g_h}(e_{\kappa} g_h) 
-  2 \int_0^tP_{g_h}((\kappa_h^* - \bar \kappa_h)e_g)
- 2 \int_0^t (P_{g_h} - P_{g_h^*})( ( \kappa_h^* - \bar \kappa_h) g_h^*(t)) - \int_0^t d_g.
\]
Taking the \( L^p \) norm and applying \eqref{eq:esti-Pgh-Pgh*}, Lemma \ref{lm:stab-Pgh-Pgh*} (stability of \( P_{g_h} \) and \( P_{g_h^*} \)), along with the boundedness results in \eqref{eq:bound-g*-k*} and \eqref{eq:bound-gh}, we obtain for \( t \in [0, t^{*}] \), 
\[ 
\begin{aligned}
    \| &e_g(t) \|_{L^p(M_h)}  \lesssim 
    \int_0^t \|d_g \|_{L^p(M_h)} + 
    \int_0^t \|g_h\|_{L^{\infty}(M_h)} \|e_{\kappa} \|_{L^p(M_h)} \\ 
    &+ 
    \int_0^t \|(\kappa_h^* - \bar \kappa_h)\|_{L^{\infty}(M_h)} \|e_g\|_{L^p(M_h)} + 
    \int_0^t \|( ( \kappa_h^* - \bar \kappa_h) g_h^*(t))\|_{L^{\infty}(M_h)} \|e_g\|_{L^p(M_h)}\\ 
    & \lesssim \int_0^t \|d_g \|_{L^p(M_h)} + 
    \int_0^t \|e_{\kappa} \|_{L^p(M_h)} + 
    \int_0^t \|e_g\|_{L^p(M_h)}.
\end{aligned}
\]
By Gr\"onwall's inequality, for  $t \in [0,t^*]$, we have 
\[     \begin{aligned}
    \| &e_g(t) \|_{L^p(M_h)}  \lesssim 
    \int_0^t \|d_g (s)\|_{L^p(M_h)} + 
\int_0^t \|e_{\kappa}(s) \|_{L^p(M_h)} \\ 
& \lesssim   (\int_0^t \|d_g(s) \|^2_{L^p(M_h)})^{1/2} 
+ (\int_0^t \|e_{\kappa}(s) \|^2_{L^2(M_h)})^{1/2} 
+ (\int_0^t \|\nabla e_{\kappa}(s) \|^2_{L^2(M_h)})^{1/2}, 
\end{aligned} \]
where in the last step, we used the Sobolev embedding $ H^1(M_h) \hookrightarrow L^p(M_h) $. 
\end{proof}
\begin{remark}\label{rmk:L2-stab-e_g}
    {\upshape 
    For the $L^2$ stability estimate of $e_g$ for both schemes \eqref{eq:num-Ric} and \eqref{eq:num-Ric-L2} with $r \geq 1, q \geq 1$, special attention needs to be paid to the error equation \eqref{eq:error-g-L2}, which involves the \( L^2 \) projection \( \Pi_{g_h(t)} \). In this case, the estimate in \eqref{eq:esti-Pgh-Pgh*} is replaced by the following estimate, 
    \begin{equation}\label{eq:esti-Pgh-Pgh*-L2} 
    \begin{aligned}
    &\quad \|(\Pi_{g_h} - \Pi_{g_h^*})((\kappa_h^* - \bar \kappa_h)g_h^*) \|_{L^2(M_h)} \\& \lesssim ( \|(\kappa_h^* - \bar \kappa_h)g_h^* \|_{L^{\infty}(M_h)} + 
    \|\partial_tg_h^*\|_{L^{\infty}(M_h)}) \|e_g\|_{L^2(M_h)} + \|d_{L^2,g}\|_{L^2(M_h)},
    \end{aligned}    
    \end{equation} 
    which can be proved as follows. For any $\rho \in \Sigma_h$, it holds that 
    \[ 
    \begin{aligned} 
    & \quad ((\Pi_{g_h} - \Pi_{g_h^*})((\kappa_h^*-\bar \kappa_h)g_h^*),\rho)_{g_h,M_h}\\ 
    &=((\kappa_h^*-\bar \kappa_h)g_h^*,\rho)_{g_h,M_h}
    - ((\kappa_h^*-\bar \kappa_h)g_h^*,\rho)_{g_h^*,M_h}\\ 
    & + (\Pi_{g_h^*}((\kappa_h^*-\bar \kappa_h)g_h^*),\rho)_{g_h^*,M_h}
    - (\Pi_{g_h^*}((\kappa_h^*-\bar \kappa_h)g_h^*),\rho)_{g_h,M_h}\\ 
    &\lesssim\|(\kappa_h^*-\bar \kappa_h)g_h^*\|_{L^{\infty}(M_h)}
      \|e_g\|_{L^2(M_h)} \| \rho \|_{L^2(M_h)} 
    \\
    & + {\big|}(\partial_tg_h^*-d_{L^2,g},\rho)_{g_h,M_h}
    - (\partial_tg_h^*-d_{L^2,g},\rho)_{g_h^*,M_h}{\big|}
    \\
    &\lesssim\Big((\|(\kappa_h^*-\bar \kappa_h)g_h^*\|_{L^{\infty}(M_h)}
    + \|\partial_tg_h^*\|_{L^{\infty}(M_h)})\|e_g\|_{L^2(M_h)}
    + \|d_{L^2,g}\|_{L^2(M_h)} \Big)\|\rho\|_{L^2(M_h)},
    \end{aligned} \]
    where we substitute equation \eqref{eq:error-g-L2} and apply \eqref{eq:gh-g*-inner-tensor} and \eqref{eq:gh-g*-equi-tensor} in Lemma \ref{lm:norm-equi-gh} (Norm equivalence of $g_h$). Then, \eqref{eq:esti-Pgh-Pgh*-L2} is proved by setting $\rho = (\Pi_{g_h} - \Pi_{g_h^*})((\kappa_h^* - \bar{\kappa}_h) g_h^*)$. 
    Now, applying a similar approach as in \eqref{eq:esti-p_te_g} while utilizing \eqref{eq:bound-gh-kh-L2} and \eqref{eq:bound-e_g-e_k-L2} from Remark \ref{rmk:L2-bound}, we obtain 
\[
\|\partial_t e_g\|_{L^{\infty}(0, t^*; L^{2}(M_h))} \lesssim h^{1.5},
\]
which consequently implies  
\begin{equation}\label{eq:bound-p_tg_h-L2} 
    \|\partial_t g_h\|_{L^{\infty}(0, t^*; L^{\infty}(M_h))} \lesssim 
    \|\partial_t g_h^*\|_{L^{\infty}(0, t^*; L^{\infty}(M_h))} + 
    h^{-1} \|\partial_t e_g\|_{L^{\infty}(0, t^*; L^{2}(M_h))} \leq 
    C,
\end{equation}
for the finite element solutions given by schemes \eqref{eq:num-Ric} and \eqref{eq:num-Ric-L2} with \(r \geq 1, q \geq 1\).
    Moreover, employing a similar approach as in Proposition \ref{prop:stab-eg} with \eqref{eq:esti-Pgh-Pgh*-L2}, we derive the following \(L^2\) stability estimates for \(e_g\) on \([0,t^*]\),
    \begin{subequations}\label{eq:L2-esti-eg}
        \begin{align}
        \label{eq:L2-esti-eg-Regge} 
            \| e_g(t) \|_{L^2(M_h)}^2 &\lesssim 
            \int_0^t \|d_g(s) \|^2_{L^2(M_h)}
            +  \|e_{\kappa}(s) \|^2_{L^2(M_h)},\\
        \label{eq:L2-esti-eg-L2} 
        \| e_g(t) \|_{L^2(M_h)}^2 &\lesssim 
        \int_0^t \|d_{L^2, g}(s) \|^2_{L^2(M_h)}
        +  \|e_{\kappa}(s) \|^2_{L^2(M_h)},   
        \end{align}
    \end{subequations}
    corresponding to the finite element solutions given by \eqref{eq:num-Ric} and \eqref{eq:num-Ric-L2} respectively.
}
\end{remark}

We now turn to the stability estimate of \( e_{\kappa} \) in the error equation \eqref{eq:error-kappa}. By utilizing the matrix-vector representation, the following holds:
{\[ \begin{aligned}
    \frac{1}{2}\frac{\mathrm{d}}{\mathrm{d}t} ( \| e_\kappa \|^2_{L^2(g_h, M_h)}) &+ 
    \| \nabla_{g_h} e_\kappa \|_{L^2(g_h, M_h)}^2 = 
    \frac{1}{2}\frac{\mathrm{d}}{\mathrm{d}t} \big( \ekappa^{\top} \M[\g(t)] \ekappa \big) + 
    \ekappa^{\top} \A[\g(t)] \ekappa \\ 
    &= \ekappa^{\top} \big(\frac{\mathrm{d}}{\mathrm{d}t}\big(\M[\g(t)] \ekappa \big) +  \A[\g(t)] \ekappa  \big)- \frac{1}{2} \ekappa^{\top} \dot{(\M[\g(t)])} \ekappa.
\end{aligned} \]} 
Then, substituting the error equation \eqref{eq:error-kappa}, we have,
\begin{equation}\label{eq:esti-ekappa} 
\begin{aligned}
    \frac{1}{2}\frac{\mathrm{d}}{\mathrm{d}t} ( \| e_\kappa \|^2_{L^2(g_h, M_h)}) &+ 
    \| \nabla_{g_h} e_\kappa \|_{L^2(g_h, M_h)}^2 = \ekappa^{\top}{  ( \dot{(\M[\gs(t)])} - \dot{(\M[\g(t)])} )} \ka^*(t) \\ 
    & + \ekappa^{\top} { ( {\M[\gs(t)]} - {\M[\g(t)]} )} \dot{\ka^*(t)}
    + \ekappa^{\top} { ( {\A[\gs(t)]} - {\A[\g(t)]} )} \ka^*(t) \\
    & - \frac{1}{2} \ekappa^{\top} { \dot{(\M[\g(t)])}}  \ekappa - \ekappa^{\top} \M[\gs(t)] \dkappa  \\
    & = I_1 + I_2 + I_3 + I_4  - \ekappa^{\top} \M[\gs(t)] \dkappa.
\end{aligned}
\end{equation} 
To handle \( I_1, \dots, I_4 \), we introduce the intermediate metric between \( g_h(t) \) and \( g_h^*(t) \), defined as
\[
g_{h,\theta}(t) = \theta g_h(t) + (1 - \theta) g_h^*(t) \in \Sigma_h, \quad \theta \in [0, 1].
\]
The following identities thus hold:
\begin{subequations}\label{eq:g-theta}  
\begin{align}
    \label{eq:inv-g} 
    \hspace{-10mm}
    (g_h(t))^{-1} - (g_h^*(t))^{-1} 
    & = -\int_0^1  g_{h, \theta}^{-1}(t) \frac{\mathrm{d}}{\mathrm{d} \theta} ( g_{h, \theta}(t) )g_{h, \theta}^{-1}(t) = -\int_0^1  g_{h, \theta}^{-1}(t) { e_g(t)}  g_{h, \theta}^{-1}(t) , \\ 
    \label{eq:det-g} 
    \hspace{-8mm}
    \sqrt{\det{g_h(t)}} &- \sqrt{\det{g^*_h(t)}} = \frac{1}{2} \int_0^1 \sqrt{\det g_{h,\theta}(t)} 
    g_{h, \theta}^{-1}(t):{ e_g(t)}. 
\end{align}
\end{subequations}
From \eqref{eq:inv-det} and the boundedness results in \eqref{eq:bound-g*} and \eqref{eq:bound-gh}, we obtain the following bound for the intermediate metric \( g_{h,\theta} \) with \( \theta \in [0, 1] \):
\begin{equation}\label{eq:bound-gh^thete} 
    \| g_{h, \theta} \|_{L^{\infty}(0,t^*;L^{\infty}(M_h))} + 
    \| g_{h, \theta}^{-1} \|_{L^{\infty}(0,t^*;L^{\infty}(M_h))} + 
    \| \sqrt{\det{g_{h, \theta}}} \|_{L^{\infty}(0,t^*;L^{\infty}(M_h))} \leq C.
\end{equation} 
Based on \eqref{eq:inv-g}, \eqref{eq:det-g}, the terms \( I_1, \dots, I_4 \) in \eqref{eq:esti-ekappa} can be expressed using the following lemma.
\begin{lemma}\label{lm:g_thete} 
For any $v_h, w_h \in V_h$ with vector representations $\bv, \w \in \mathbb{R}^{{\rm dim}V_h}$, the following equalities hold:
\begin{subequations} 
    \begin{align}   
        \label{eq:M-M*} 
        \bv^\top ( {\M[\gs(t)]} - {\M[\g(t)]} ) \w 
        &= -\frac{1}{2} \int_{M_h} v_h w_h\big( 
            \int_0^1 \sqrt{\det g_{h,\theta}} 
            g_{h, \theta}^{-1}\big):{ e_g}\mathrm{d}x, 
    \end{align}
    \begin{align}  
    \label{eq:dot-M} 
    \bv^\top \dot{(\M[\g(t)])} \w = \frac{1}{2}\int_{M_h} v_h w_h \big( \sqrt{\det g_h} (g_h)^{-1}: (\partial_t g_h)  \big)\mathrm{d}x,
    \end{align}
    \begin{align}\label{eq:dot-M-M*} 
        \bv^\top (\dot{(\M[\gs(t)])} - \dot{(\M[\g(t)])}) \w
    & = -\frac{1}{4}\int_{M_h} v_h w_h 
    \big( \int_0^1 \sqrt{\det g_{h,\theta}  } g_{h, \theta}^{-1}  :{ e_g  } \big) 
    (g^*_h  )^{-1}: (\partial_t g^*_h)\mathrm{d}x  \notag \\
    & - \frac{1}{2}\int_{M_h} v_h w_h \sqrt{\det g_h  }
    (g^*_h  )^{-1}: ({ \partial_t e_g}) \mathrm{d}x \notag \\ 
    & + \frac{1}{2}\int_{M_h} v_h w_h \sqrt{\det g_h  }
    \big( \int_0^1  g_{h, \theta}^{-1}   { e_g  }  g_{h, \theta}^{-1}   \big) : (\partial_t g_h) \mathrm{d}x,
    \end{align}
    \begin{align}\label{eq:A-A*} 
        \bv^\top (\A[\gs (t)] - \A[\g (t)]) \w &= 
        -\frac{1}{2} \int_{M_h} (g^*_h)^{-1}\nabla v_h \cdot \nabla w_h 
        \big(  \int_0^1 \sqrt{\det g_{h,\theta}} g_{h, \theta}^{-1}\big):{ e_g}\mathrm{d}x \notag \\
        & + \int_{M_h} \big( \int_0^1  g_{h, \theta}^{-1} { e_g}  g_{h, \theta}^{-1} \big)\nabla v_h \cdot \nabla w_h \sqrt{\det g_h} \mathrm{d}x.
    \end{align}
\end{subequations}
\end{lemma}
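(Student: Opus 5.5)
The plan is to reduce every identity to a pointwise integrand computation, using bilinearity of the matrix representations. By definition of $\M[\g]$ and $\A[\g]$ and linearity in the nodal basis, for any $v_h,w_h\in V_h$ with coefficient vectors $\bv,\w$,
\[
\bv^\top\M[\g]\w=\int_{M_h}v_hw_h\sqrt{\det g_h}\,\mathrm{d}x,\qquad \bv^\top\A[\g]\w=\int_{M_h}g_h^{-1}\nabla v_h\cdot\nabla w_h\sqrt{\det g_h}\,\mathrm{d}x,
\]
and the same formulas hold with $g_h^*$ in place of $g_h$. Each identity in the lemma then follows by subtracting these integrands and rewriting the differences with \eqref{eq:inv-g}, \eqref{eq:det-g} and \eqref{eq:pt-det}. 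Since $v_h,w_h$ are time-independent test functions (the basis $\phi_i$ does not depend on $t$), the time derivatives act only on the metric factors.

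For \eqref{eq:M-M*}, I insert \eqref{eq:det-g} directly into the difference $\sqrt{\det g_h^*}-\sqrt{\det g_h}$, which gives the stated identity with the sign. For \eqref{eq:dot-M}, I differentiate $\sqrt{\det g_h}$ in time and apply the analogue of \eqref{eq:pt-det} for $g_h$, which holds by the same Jacobi formula. For \eqref{eq:A-A*}, I split
\[
(g_h^*)^{-1}\sqrt{\det g_h^*}-g_h^{-1}\sqrt{\det g_h}=(g_h^*)^{-1}\big(\sqrt{\det g_h^*}-\sqrt{\det g_h}\big)+\big((g_h^*)^{-1}-g_h^{-1}\big)\sqrt{\det g_h}.
\]
The first bracket is handled with \eqref{eq:det-g} and the second with \eqref{eq:inv-g}, using the sign $(g_h^*)^{-1}-g_h^{-1}=+\int_0^1g_{h,\theta}^{-1}e_gg_{h,\theta}^{-1}$.

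The main obstacle is \eqref{eq:dot-M-M*}, which is bookkeeping of a triple product. I write the integrand difference as
\[
\tfrac12\Big[\sqrt{\det g_h^*}\,(g_h^*)^{-1}:\partial_tg_h^*-\sqrt{\det g_h}\,g_h^{-1}:\partial_tg_h\Big]
\]
and telescope it in three steps.

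1. Replace $\sqrt{\det g_h^*}$ by $\sqrt{\det g_h}$. The resulting error is handled by \eqref{eq:det-g} and produces the first term, with coefficient $-\tfrac14$.
2. Replace $\partial_tg_h^*$ by $\partial_tg_h$. Since $\partial_tg_h^*-\partial_tg_h=-\partial_te_g$, this produces the second term, which keeps $(g_h^*)^{-1}$.
3. Replace $(g_h^*)^{-1}$ by $g_h^{-1}$. This is handled by \eqref{eq:inv-g} and produces the third term, which is contracted with $\partial_tg_h$.

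Finally, I check the signs and the factors $\tfrac12,\tfrac14$ against the statement, and note that $\int_0^1$ commutes with the contractions by linearity.
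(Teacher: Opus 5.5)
Your proposal is correct and follows the paper's proof essentially verbatim. The paper uses the same reduction to integrands, and the same split of $(g_h^*)^{-1}\sqrt{\det g_h^*}-g_h^{-1}\sqrt{\det g_h}$ for \eqref{eq:A-A*}. For \eqref{eq:dot-M-M*} it uses the same three-term telescoping, in the same order, handled by \eqref{eq:det-g} and \eqref{eq:inv-g}, and your signs and the factors $\tfrac12,\tfrac14$ match.
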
 
\begin{proof}
    \eqref{eq:M-M*} follows from the definition of the metric-dependent mass matrices $\M[\gs(t)]$, $\M[\g(t)]$, and the identity \eqref{eq:det-g}.
    \eqref{eq:dot-M} follows from the identity \eqref{eq:pt-det}.
    For \eqref{eq:dot-M-M*}, it holds that
\[ \begin{aligned}
    \bv^\top (\dot{(\M[\gs(t)])} - \dot{(\M[\g(t)])}) \w 
& = \frac{1}{2}\int_{M_h} v_h w_h \big( \sqrt{\det g^*_h } - \sqrt{\det g_h }\big)
(g^*_h )^{-1}: (\partial_t g^*_h)\mathrm{d}x \\
& + \frac{1}{2}\int_{M_h} v_h w_h \sqrt{\det g_h }
(g^*_h )^{-1}: \big((\partial_t g^*_h) - (\partial_t g_h)\big)\mathrm{d}x \\
& + \frac{1}{2}\int_{M_h} v_h w_h \sqrt{\det g_h }
\big((g^*_h )^{-1} - (g_h )^{-1}\big) : (\partial_t g_h)\mathrm{d}x .
\end{aligned} \]
Therefore, \eqref{eq:dot-M-M*} is proved by using \eqref{eq:inv-g} and \eqref{eq:det-g}.
    For \eqref{eq:A-A*}, it holds that
    \[ \begin{aligned}
    \bv^\top \A[\gs (t)] \w - \bv^\top \A[\g (t)] \w 
   &=  \int_{M_h} (g^*_h )^{-1}\nabla v_h \cdot \nabla w_h \big(\sqrt{\det g^*_h } - \sqrt{\det g_h }\big)\mathrm{d}x\\ 
   &+ \int_{M_h} \big((g^*_h )^{-1} - (g_h )^{-1}\big)\nabla v_h \cdot \nabla w_h \sqrt{\det g_h }\mathrm{d}x.
\end{aligned}\]
Therefore, \eqref{eq:A-A*} is proved by using \eqref{eq:inv-g} and \eqref{eq:det-g}.
\end{proof}

\begin{proposition}[Stability estimate of $e_\kappa$ for scheme \eqref{eq:num-Ric}]\label{prop:stab-ekappa} 
    Under the conditions of Theorem \ref{thm:main}, for the scheme in \eqref{eq:num-Ric} with $r \geq 0, q \geq 1$, there
    exists a constant $h_0 > 0$ such that the following stability result holds for 
    $0<h \leq h_0$ and $t \in [0,t^*]$\rm{:}
    \begin{subequations}
        \begin{align}
            \label{eq:esti-ekappa-LinfL2} 
            \| e_\kappa(t) \|^2_{L^2 (M_h)} &\lesssim 
            \int_0^t \|e_g(s)\|_{L^p(M_h)}^2 + 
            \|d_g(s)\|_{L^2(M_h)}^2 + \|d_\kappa(s)\|_{L^2(M_h)}^2,\\
            \label{eq:esti-ekappa-L2H1} 
         \int_0^t \| \nabla e_\kappa(s) \|^2_{L^2 (M_h)} 
         &\lesssim \int_0^t \|e_g(s)\|_{L^p(M_h)}^2 + \|e_\kappa(s)\|_{L^2(M_h)}^2 \\  
        & + \int_0^t\|d_g(s)\|_{L^2(M_h)}^2 + \|d_\kappa(s)\|_{L^2(M_h)}^2.  \notag
        \end{align}
        \end{subequations}    
\end{proposition}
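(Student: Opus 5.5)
The plan is to start from the energy identity \eqref{eq:esti-ekappa} and bound $I_1,\dots,I_4$ and the defect term with Lemma \ref{lm:g_thete}. Throughout I will use the bounds \eqref{eq:bound-g*-k*}, \eqref{eq:bound-gh}, \eqref{eq:bound-gh^thete}, \eqref{eq:bound-p_tg_h}, and the norm equivalences of Lemma \ref{lm:norm-equi-gh}. The goal is an inequality of the form
\[
\frac{\mathrm{d}}{\mathrm{d}t}\| e_\kappa \|^2_{L^2(g_h,M_h)} + \| \nabla_{g_h} e_\kappa\|^2_{L^2(g_h,M_h)} \lesssim \|e_g\|_{L^p}^2 + \|e_\kappa\|_{L^2}^2 + \|d_g\|_{L^2}^2 + \|d_\kappa\|_{L^2}^2 .
\]
Integrating this in time and using $e_\kappa(0)=0$ gives \eqref{eq:esti-ekappa-L2H1} directly. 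Gr\"onwall then gives \eqref{eq:esti-ekappa-LinfL2}.

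\textbf{Easy terms.}
\begin{itemize}
\item $I_2$: use \eqref{eq:M-M*} with $\bv=\ekappa$ and $\w=\dot{\ka^*}$. Since $\partial_t\kappa_h^*$ is bounded in $L^\infty$, Hölder gives $|I_2|\lesssim \|e_\kappa\|_{L^2}\|e_g\|_{L^2}\lesssim \|e_\kappa\|_{L^2}\|e_g\|_{L^p}$.
\item $I_3$: use \eqref{eq:A-A*} with $\nabla\kappa_h^*\in L^\infty$. This gives $|I_3|\lesssim \|\nabla e_\kappa\|_{L^2}\|e_g\|_{L^2}$. Young's inequality absorbs $\epsilon\|\nabla e_\kappa\|^2$ into the left side.
\item Defect term: it is bounded by $\|e_\kappa\|_{L^2}\|d_\kappa\|_{L^2}$.
\end{itemize}

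\textbf{Term $I_1$.} Use \eqref{eq:dot-M-M*} with $\w=\ka^*$; since $\kappa_h^*\in L^\infty$, this is fine.
\begin{itemize}
\item The first piece is bounded by $\|e_\kappa\|_{L^2}\|e_g\|_{L^2}$.
\item The second piece contains $\partial_t e_g$. I will not integrate by parts in time. Instead, substitute the error equation \eqref{eq:error-g}, or simply use \eqref{eq:esti-p_te_g}: $\|\partial_t e_g\|_{L^2}\lesssim \|d_g\|_{L^2}+\|e_\kappa\|_{L^2}+\|e_g\|_{L^2}$. This gives a bound $\|e_\kappa\|_{L^2}(\|d_g\|+\|e_\kappa\|+\|e_g\|)$, which explains why $d_g$ appears in the statement.
\item The third piece involves $e_g$ against $\partial_t g_h$. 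Here $\partial_t g_h$ is only bounded in $L^{\tilde q}$ by \eqref{eq:bound-p_tg_h}, so a three-factor Hölder inequality is needed: $\|e_\kappa\|_{L^{s}}\,\|e_g\|_{L^p}\,\|\partial_t g_h\|_{L^{\tilde q}}$ with $1/s+1/p+1/\tilde q=1$. The factor $\|e_\kappa\|_{L^s}$ is controlled by $\|e_\kappa\|_{H^1}$ via Sobolev embedding in 2D, and $\epsilon$-absorption handles the gradient part.
\end{itemize}

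\textbf{Term $I_4$ (main obstacle).} By \eqref{eq:dot-M}, $I_4$ is the quadratic term $\int e_\kappa^2\sqrt{\det g_h}\,g_h^{-1}:\partial_t g_h$, again with $\partial_t g_h$ only in $L^{\tilde q}$. I will write $\partial_t g_h=\partial_t g_h^*+\partial_t e_g$.
\begin{itemize}
\item The $\partial_t g_h^*$ part is bounded in $L^\infty$ and gives $\|e_\kappa\|_{L^2}^2$.
\item For the $\partial_t e_g$ part, Hölder gives $\|e_\kappa\|_{L^4}^2\|\partial_t e_g\|_{L^2}$. Ladyzhenskaya's inequality gives $\|e_\kappa\|_{L^4}^2\lesssim\|e_\kappa\|_{L^2}\|e_\kappa\|_{H^1}$. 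The a priori bound $\|\partial_t e_g\|_{L^2}\lesssim h^{1/2+1/p}\le 1$ from \eqref{eq:bound-e_g-e_k} then yields $\epsilon\|\nabla e_\kappa\|^2 + C\|e_\kappa\|^2$.
\end{itemize}

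The same splitting could alternatively be used in the third piece of $I_1$ in place of the three-factor Hölder inequality. The delicate part is choosing the exponents in these two terms so that every bound closes using only $\|e_g\|_{L^p}$ and the absorbable $\|\nabla e_\kappa\|_{L^2}$.
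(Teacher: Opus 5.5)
Your proposal is correct and follows essentially the same route as the paper. Both start from the energy identity \eqref{eq:esti-ekappa}, bound $I_1,\dots,I_4$ via Lemma \ref{lm:g_thete}, remove $\partial_t e_g$ in $I_1$ with \eqref{eq:esti-p_te_g}, absorb $\varepsilon\|\nabla e_\kappa\|_{L^2}^2$ by Young, and finish by integrating in time and applying Gr\"onwall.

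The only differences are cosmetic choices of exponents:
\begin{itemize}
\item In the third piece of $I_1$, the paper simply takes $s=2$ and $\tilde q = 2p/(p-2)$. This choice needs no Sobolev embedding at all.
\item For $I_4$, the paper bounds the term directly by $\|\partial_t g_h\|_{L^{2p/(p-2)}}\|e_\kappa\|_{L^p}\|e_\kappa\|_{L^2}$ and then uses $H^1\hookrightarrow L^p$. Your splitting $\partial_t g_h=\partial_t g_h^*+\partial_t e_g$ combined with Ladyzhenskaya closes equally well, and would even let you bypass \eqref{eq:bound-p_tg_h}.
\end{itemize}
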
 
\begin{proof}
We apply Lemma \ref{lm:g_thete} to estimate $I_1, \dots, I_4$ in \eqref{eq:esti-ekappa}. 
For $I_1$, using \eqref{eq:dot-M-M*}, we have 
{\[ 
\begin{aligned}
    \hspace{-100pt}
    I_1 & = \ekappa^{\top}{( \dot{(\M[\gs(t)])} - \dot{(\M[\g(t)])} )} \ka^*(t) \\
    & \lesssim 
    \|{ e_\kappa} \|_{L^2(M_h)}\big(  
        (\int_0^1\|\sqrt{\det g_{h,\theta}} g_{h, \theta}^{-1}\|_{L^\infty(M_h)}) 
        \|\kappa^*_h(g^*_h  )^{-1} : \partial_t g^*_h \|_{L^\infty(M_h)}
    \big) \|{ e_g  }\|_{L^2(M_h)} \\ 
    &+ \|{ e_\kappa} \|_{L^2(M_h)}\big( 
        \|\kappa^*_h \sqrt{\det g_h  }(g^*_h  )^{-1}\|_{L^\infty(M_h)}
    \big) \|{ \partial_t e_g}\|_{L^2(M_h)} \\
    &+ \|{ e_\kappa} \|_{L^2(M_h)}\big( 
        \|\kappa^*_h \sqrt{\det g_h  }\|_{L^\infty(M_h)}  (\int_0^1 \|g_{h, \theta}^{-1}\|_{L^\infty(M_h)}^2)  
    \big) \|\partial_t g_h \|_{{ L^{p^{\prime}}(M_h)} } \|{ e_g  }\|_{{ L^p(M_h)} }. 
\end{aligned}
\]} 
Here $p^{\prime} = \frac{2p}{p-2} $, thus we have  
$2 < p^{\prime} < \frac{4p}{p-2}$ .
According to the boundedness results in \eqref{eq:bound-g*-k*}, \eqref{eq:bound-gh}, \eqref{eq:bound-p_tg_h} and \eqref{eq:bound-gh^thete}, it holds that for $t \in [0,t^{*}]$,
\[ I_1 \lesssim 
\|e_\kappa(t)\|_{L^2(M_h)}^2 + \|e_g(t)\|_{L^p(M_h)}^2 + \|\partial_t e_g(t)\|_{L^2(M_h)}^2.\]
We use \eqref{eq:M-M*}  to estimate $I_2=\ekappa^{\top} {( {\M[\gs(t)]} - {\M[\g(t)]} )} \dot{\ka^*(t)} $:
{\[ \begin{aligned}
    I_2 
    & \lesssim  \| { e_\kappa}\|_{L^2(M_h)}
    \big( \|\partial_t\kappa_h^*\|_{L^\infty(M_h)}(\int_0^1 \|\sqrt{\det g_{h,\theta} } \|_{L^\infty(M_h)}
    \|g_{h, \theta}^{-1} \|_{L^\infty(M_h)})
    \big)  \|{ e_g }\|_{L^2(M_h)}.
\end{aligned} \]} 
According to the boundedness results in \eqref{eq:bound-g*-k*} and \eqref{eq:bound-gh^thete}, it holds that for $t \in [0,t^{*}]$, 
\[ I_2 \lesssim 
\|e_\kappa(t)\|_{L^2(M_h)}^2 + \|e_g(t)\|_{L^2(M_h)}^2.\]
For $I_3 = \ekappa^{\top} {( {\A[\gs(t)]} - {\A[\g(t)]} )} \ka^*(t)$, using \eqref{eq:A-A*}, we have
{\[ \begin{aligned}
    \hspace{-120pt}
   I_3 & \lesssim 
    \|{ \nabla e_\kappa}\|_{L^2(M_h)}\big(
    \|(g^*_h )^{-1} \nabla  \kappa^*_h\|_{L^\infty(M_h)}
    (\int_0^1\|(\sqrt{\det g_{h,\theta} }) g_{h, \theta}^{-1} \|_{L^\infty(M_h)})
    \big) \|{ e_g }\|_{L^2(M_h)}\\
   & +   \|{ \nabla e_\kappa}\|_{L^2(M_h)}\big(
(\int_0^1\|g_{h, \theta}^{-1}  \|_{L^\infty(M_h)}^2)
\|\nabla  \kappa^*_h \|_{L^\infty(M_h)}
\|\sqrt{\det g_h }\|_{L^\infty(M_h)}
   \big) \|{ e_g }\|_{L^2(M_h)}.
\end{aligned} \]} 
According to the boundedness results in \eqref{eq:bound-g*-k*}, \eqref{eq:bound-gh} and \eqref{eq:bound-gh^thete}, it holds that for $t \in [0,t^{*}]$, 
\[ I_3 \lesssim 
\varepsilon \|\nabla e_\kappa(t)\|_{L^2(M_h)}^2 + \varepsilon^{-1} \|e_g(t)\|_{L^2(M_h)}^2,\]
where $\varepsilon > 0$ can be arbitrary small.
We apply \eqref{eq:dot-M}  to estimate $I_4$,
\[ \begin{aligned}
    I_4& = - \frac{1}{2} \ekappa^{\top} { \dot{(\M[\g(t)])}}  \ekappa 
    = -\frac{1}{4}\int_{M_h} e_\kappa e_\kappa  \big( \sqrt{\det g_h(t)} (g_h(t))^{-1}: (\partial_t g_h)  \big)\mathrm{d}x \\ 
    &\lesssim  \big(  \|\sqrt{\det g_h}\|_{L^\infty(M_h)} \|(g_h)^{-1}\|_{L^\infty(M_h)} \big)
    \|\partial_t g_h \|_{{ L^{p^{\prime}}(M_h)}}
    \|{ e_\kappa} \|_{{ L^p(M_h)} }
    \|{ e_\kappa} \|_{L^2(M_h)}.
\end{aligned} \]
Similarly to $I_1$, 
according to \eqref{eq:bound-p_tg_h} and  \eqref{eq:bound-gh}, it holds that for $t \in [0,t^{*}]$,
\[ I_4 \lesssim  \|{ e_\kappa} \|_{{ L^p(M_h)} }
\|{ e_\kappa} \|_{L^2(M_h)} \lesssim  \|{ e_\kappa} \|_{{ H^1(M_h)} }
\|{ e_\kappa} \|_{L^2(M_h)}\lesssim \varepsilon \|\nabla{e_\kappa} \|_{{L^2(M_h)} }^2 + \varepsilon^{-1} \|{e_\kappa} \|_{L^2(M_h)}^2, \]
where we have used the Sobolev embedding $ H^1(M_h) \hookrightarrow L^p(M_h) $.

Combining the above estimates of $I_1, \cdots, I_4$ with \eqref{eq:esti-ekappa}, we have, for $t \in [0,t^*]$,
\[ 
\begin{aligned}
    & \quad \frac{1}{2}\frac{\mathrm{d}}{\mathrm{d}t} ( \| e_\kappa \|^2_{L^2(g_h, M_h)}) + 
\| \nabla_{g_h} e_\kappa \|_{L^2(g_h, M_h)}^2 \\
& \lesssim 
\|e_g(t)\|_{L^p(M_h)}^2 + \|e_\kappa(t)\|_{L^2(M_h)}^2 + \|d_\kappa(t)\|_{L^2(M_h)}^2 
+ \| \partial_t e_g(t) \|_{L^2}^2 + \varepsilon  \|\nabla e_\kappa(t)\|_{L^2(M_h)}^2.\\ 
\end{aligned}
\]
On the right-hand side, the term \( \| \partial_t e_g(t) \|_{L^2(M_h)}^2 \) can be further estimated using \eqref{eq:esti-p_te_g}, while the term \( \varepsilon  \|\nabla e_\kappa(t)\|_{L^2(M_h)}^2 \) can be absorbed into the left-hand side. Consequently, we obtain
\[ 
\begin{aligned}
    & \quad \frac{1}{2}\frac{\mathrm{d}}{\mathrm{d}t} ( \| e_\kappa \|^2_{L^2(g_h, M_h)}) + 
\| \nabla_{g_h} e_\kappa \|_{L^2(g_h, M_h)}^2 \\
& \lesssim
\|e_g(t)\|_{L^p(M_h)}^2 + \|e_\kappa(t)\|_{L^2(M_h)}^2 + 
\|d_g(t)\|_{L^2(M_h)}^2 + \|d_\kappa(t)\|_{L^2(M_h)}^2.
\end{aligned}
\]
{By applying Gr\"onwall's inequality and the norm equivalence in Lemma \ref{lm:norm-equi-gh}, and then integrating over \([0,t]\) for \(t\in[0,t^*]\), we obtain the estimates in \eqref{eq:esti-ekappa-LinfL2} and \eqref{eq:esti-ekappa-L2H1}. } 
\end{proof}
\begin{remark}\label{rmk:L2-stab-e_k}
    {\upshape 
    To establish the \( L^2 \) stability estimate for both schemes \eqref{eq:num-Ric} and \eqref{eq:num-Ric-L2} with \( r \geq 1, q \geq 1 \), we employ a similar approach as in Proposition \ref{prop:stab-ekappa}, utilizing \eqref{eq:bound-p_tg_h-L2} from Remark \ref{rmk:L2-stab-e_g}, as well as \eqref{eq:bound-gh-kh-L2} and \eqref{eq:bound-e_g-e_k-L2} from Remark \ref{rmk:L2-bound}. This leads to the following \( L^2 \) stability estimates for \( e_\kappa \) on \([0,t^*]\):
    \begin{subequations}
    \begin{align}
        \label{eq:L2-esti-ek-Regge} 
        \|e_\kappa (t) \|^2_{L^2(M_h)} \lesssim 
        \int_0^t \|d_g(s) \|^2_{L^2(M_h)}
    + \int_0^t \|d_\kappa(s) \|^2_{L^2(M_h)}
    + \int_0^t \|e_{g}(s) \|^2_{L^2(M_h)}, \\
    \label{eq:L2-esti-ek-L2} 
    \|e_\kappa (t) \|^2_{L^2(M_h)} \lesssim 
    \int_0^t \|d_{L^2,g}(s) \|^2_{L^2(M_h)}
+ \int_0^t \|d_\kappa(s) \|^2_{L^2(M_h)}
+ \int_0^t \|e_{g}(s) \|^2_{L^2(M_h)}, 
    \end{align}
    \end{subequations}
    corresponding to the finite element solutions given by \eqref{eq:num-Ric} and \eqref{eq:num-Ric-L2} respectively.}
\end{remark}

\section{Error estimate}\label{sec:err}

In this section, we first prove Theorem \ref{thm:main} by combining the stability estimates from Proposition \ref{prop:stab-eg} and Proposition \ref{prop:stab-ekappa} with the {consistency defect estimates}  in Lemma \ref{lm:defect}. We then prove Theorem \ref{thm:main-L2} in a similar manner, using the stability results from Remark \ref{rmk:L2-stab-e_g} and Remark \ref{rmk:L2-stab-e_k}.

\noindent \textit{Proof of Theorem \ref{thm:main}:}    
Combining \eqref{eq:esti-eg}, \eqref{eq:esti-ekappa-LinfL2} and \eqref{eq:esti-ekappa-L2H1} we have , for $t\in [0,t^*]$
\[ \begin{aligned}
     \| e_g(t) \|^2_{L^p(M_h)}  +  \| e_\kappa(t) \|^2_{L^2 (M_h)} 
& \lesssim \int_0^t \big(\|e_g(s)\|_{L^p(M_h)}^2 + \|e_\kappa(s)\|_{L^2(M_h)}^2\big) \\ 
& + \int_0^t \big(\|d_g(s) \|^2_{L^p(M_h)}  +  \|d_g(s)\|_{L^2(M_h)}^2 + \|d_\kappa(s)\|_{L^2(M_h)}^2\big).
\end{aligned} \]
By Gr\"onwall's inequality, for $t\in [0,t^*]$, we have 
\[ \begin{aligned}
    & \quad \| e_g(t) \|^2_{L^p(M_h)}  +  \| e_\kappa(t) \|^2_{L^2 (M_h)} \\
& \lesssim \int_0^t \big(\|d_g(s) \|^2_{L^p(M_h)}  +  \|d_g(s)\|_{L^2(M_h)}^2 + \|d_\kappa(s)\|_{L^2(M_h)}^2\big).
\end{aligned} \]
Then, the estimates \eqref{eq:dg-esti-L2}, \eqref{eq:dg-esti-Lp}, and \eqref{eq:esti-dk} imply that for $t \in [0,t^*]$  
\begin{equation}\label{eq:error-estimate}  
    \| e_g(t) \|_{L^p(M_h)}  +  \| e_\kappa(t) \|_{L^2 (M_h)}  \lesssim (\ln(\frac{1}{h}))^{\bar{q}}h^{q+1} + \ln(\frac{1}{h}) h^{r+1}.  
\end{equation}  
Since {$r \geq 0$ and $q \geq 1$}, by the continuity of the spatially semidiscrete finite element solution in time (which is essentially the solution of an ODE problem), the above estimate implies that \eqref{eq:bound-e_g-e_k} still holds in a bigger interval $[0,t^* + \delta]$ for some $ \delta > 0$. Since $t^* \in (0, T ]$ is the supremum of times such that \eqref{eq:bound-e_g-e_k} holds for $t \in [0,t^*]$, it follows that $t^* = T$ and \eqref{eq:error-estimate} holds for all $t \in [0,T]$. 

The proof of Theorem \ref{thm:main} is concluded by combining \eqref{eq:error-estimate}  with the approximation properties in \eqref{eq:prop-g_h^*} and Lemma \ref{lm:app-Ritz} (properties of the Ritz projection).


\noindent \textit{Proof of Theorem \ref{thm:main-L2}:}     
We provide the proof only for the scheme \eqref{eq:num-Ric}, as the proof for scheme \eqref{eq:num-Ric-L2} is the same and thus omitted.
In the case of $r \geq 1, q \geq 1$, combining \eqref{eq:L2-esti-eg-Regge} in Remark \ref{rmk:L2-stab-e_g} and \eqref{eq:L2-esti-ek-Regge} in Remark \ref{rmk:L2-stab-e_k}, we have for $t \in [0,t^*]$,
\[ 
\begin{aligned}
    & \quad \| e_g(t) \|_{L^2(M_h)}^2 +  \|e_\kappa (t) \|_{L^2(M_h)}^2 \\
    & \lesssim \int_0^t (  \|d_g(s) \|^2_{L^2(M_h)} + \|d_\kappa(s) \|^2_{L^2(M_h)}) 
    + \int_0^t( \| e_g(s) \|_{L^2(M_h)}^2 +  \|e_\kappa (s) \|_{L^2(M_h)}^2). 
\end{aligned}
\]
The estimates  \eqref{eq:dg-esti-L2}, \eqref{eq:esti-dk} and the Gr\"onwall's inequality imply that for $t \in [0, t^*]$
\begin{equation}\label{eq:error-estimate-L2} 
    \| e_g(t) \|_{L^2(M_h)} + \| e_\kappa(t) \|_{L^2(M_h)} \lesssim h^{q+1} + h^{r+1}.
\end{equation} 
Since \( q \geq 1 \) and \( r \geq 1 \), by applying the same argument as in Theorem \ref{thm:main}, we conclude that \( t^* = T \) in \eqref{eq:bound-e_g-e_k-L2} of Remark \ref{rmk:L2-bound}, and that \eqref{eq:error-estimate-L2} holds for all \( t \in [0,T] \).
Therefore, the proof of Theorem \ref{thm:main-L2} is concluded by combining \eqref{eq:error-estimate-L2} with the approximation properties in \eqref{eq:prop-g_h^*} and Lemma \ref{lm:app-Ritz} (Properties of  Ritz projection).


\section{{Computing an embedding} } \label{sec:embedding}

{This section introduces an approach for determining an evolving surface $\mathcal{M}(t) \subset \mathbb{R}^3$ that realizes the given evolving metric $g(t)$.  
We begin by deriving the PDE governing the surface velocity, which we then discretize using the parametric FEM.  
} 
Given an evolving metric $g(t)$ on $\mathcal{M}(0) := \mathcal{M}$, we want to construct a family of surfaces $\mathcal{M}(t) \subset \mathbb{R}^3$ and a map $\varphi_t : \mathcal{M}(0) \to \mathcal{M}(t)$ so that 
\begin{equation} \label{eq:embed}
\varphi_t^* i^* \delta_{\mathbb{R}^3} = g(t),
\end{equation}  
where $i^*$ represents the pullback under the inclusion $i : \mathcal{M}(t) \hookrightarrow \mathbb{R}^3$, and $\varphi_t^*$ represents the pullback under $\varphi_t$. 
{Let \(n = n(\cdot, t): \Man(t) \rightarrow \mathbb{R}^3\) denote the unit normal vector to \(\Man(t)\), and let \({\rm P} = I - n n^{\top}: \Man(t) \rightarrow \mathbb{R}^{3 \times 3}\) denote the tangential projection matrix. The surface gradient on \(\Man(t)\) is denoted by \(\nabla_{\Man(t)}\). }
Differentiating~\eqref{eq:embed} with respect to $t$ and applying the pushforward $\varphi_{t*} = (\varphi_t^{-1})^*$ yields  
\begin{equation} \label{eq:embed_dot}
    \mathcal{L}_V i^* \delta_{\mathbb{R}^3} = \varphi_{t*} \dot{g}, 
\end{equation}  
where $\mathcal{L}_V$ denotes the Lie derivative with respect to the velocity field  
$V = \dot{\varphi}_t \circ \varphi_t^{-1}$.  
Using the identity \(\mathcal{L}_V i^* \delta_{\mathbb{R}^3} = 2 \nabla_{\Man(t)}^{\mathrm{sym}} V\), 
where \(\nabla_{\Man(t)}^{\mathrm{sym}}\) denotes the symmetrized covariant derivative, {which is defined by
\(\nabla_{\Man(t)}^{\mathrm{sym}} V = \frac{1}{2} {\rm P} \left( \nabla_{\Man(t)} V + (\nabla_{\Man(t)} V)^{\top} \right) {\rm P}\)}, we obtain
\begin{equation} \label{eq:embed_dot1}
    \nabla_{\Man(t)}^{\mathrm{sym}} V = \frac{1}{2} \varphi_{t*} \dot{g}.
\end{equation}
Now let us decompose $V$ into its tangential and normal components by writing
\[
V = V^{||} + un,
\]
where $V^{||} = {\rm P}V$ is tangent to $\mathcal{M}(t)$ and $u = V \cdot n$. Substituting this into \eqref{eq:embed_dot1}, we obtain the PDE that the surface velocity should satisfy:
\begin{equation} \label{eq:embed_dot2}
\nabla^{\rm sym}_{\mathcal{M}(t)} V^{||} + u \sff = \frac{1}{2} \varphi_{t*} \dot{g},
\end{equation}
where $\sff = \nabla_{\Man(t)} n $ is the second fundamental form. 
However, the PDE \eqref{eq:embed_dot2} is underdetermined; therefore, we propose the following problem to uniquely determine a solution. 
We introduce a symmetric positive definite bilinear  form \( a_{\mathcal{M}(t)}(\cdot, \cdot) \) on \(H^1(\mathcal{M}(t))^3\) and we then consider the following problem:
\[ \begin{aligned}
    & \min \frac{1}{2} a_{\mathcal{M}(t)}(V, V),  \\
    \text{s.t. } & \nabla^{\rm sym}_{\mathcal{M}(t)} V^{||} + u \sff = \frac{1}{2} \varphi_{t*} \dot{g}.
\end{aligned} \]
This can be reformulated as the following PDE for the surface velocity:  
Find a velocity field $V \in H^1(\mathcal{M}(t))^3$ and a symmetric $(0,2)$-tensor field $\sigma$ on $\Man(t)$ such that   
\begin{subequations}\label{eq:embed-saddle-PDE}  
    \begin{align}
        \label{eq:embed-saddle-PDE-a} 
        a_{\mathcal{M}(t)}(V, W) + (\sigma, \nabla^{\rm sym}_{\mathcal{M}(t)} W^{||} + w \sff)_{\mathcal{M}(t)} &= 0, \\ 
        \label{eq:embed-saddle-PDE-b} 
        (\nabla^{\rm sym}_{\mathcal{M}(t)} V^{||} + u \sff, \rho)_{\mathcal{M}(t)} &= 
        \frac{1}{2}(\varphi_{t*} \dot{g}, \rho)_{\mathcal{M}(t)},
    \end{align}
\end{subequations}  
holds for all  $W \in H^1(\mathcal{M}(t))^3$ and all symmetric $(0,2)$-tensor fields $\rho$ on $\Man(t)$,  
where \( W = W^{||} + w n \), as in the previous decomposition.

\begin{remark}(Choice of $a_{\mathcal{M}(t)}(\cdot,\cdot)$)
    \upshape
    The choice of \( a_{\mathcal{M}(t)}(\cdot,\cdot) \) is crucial for ensuring the stability of the PDE \eqref{eq:embed-saddle-PDE}. Here, we define  
    \[
    a_{\mathcal{M}(t)}(V,W) = 
    (\nabla^{\rm sym}_{\mathcal{M}(t)} V^{||}, \nabla^{\rm sym}_{\mathcal{M}(t)} W^{||})_{L^2(\mathcal{M}(t))} + 
    (u,w)_{L^2(\mathcal{M}(t))},
    \]
    where \( V =  V^{||} + u n \) and \( W = W^{||} + w n \),  
    based on the observation that the terms \( \nabla^{\rm sym}_{\mathcal{M}(t)} V^{||} \) and \( u \sff \) are involved in \eqref{eq:embed-saddle-PDE-b}.    
\end{remark}

\begin{remark} (Well-posedness of \eqref{eq:embed-saddle-PDE})
    \upshape
    The well-posedness of \eqref{eq:embed-saddle-PDE} for general surfaces remains a significant challenge to analyze; our current analysis is limited to planar 2D parameter domains. The analysis on planar domains demonstrates that the positive definiteness of the second fundamental form $\sff$ is a critical property. 
    Nevertheless, the embedding PDE \eqref{eq:embed-saddle-PDE} remains applicable for computations and provides an efficient method to simulate and visualize the solution of the Ricci flow. 

\end{remark}

We approximate the evolving surface \(\Man(t)\) by a piecewise flat triangular surface \(M_h(t) = \bigcup_{K \in \mathcal{T}_h(t)} K\), where \(\mathcal{T}_h(t)\) denotes the set of triangles and \(\mathcal{E}_h(t)\) denotes the set of edges on the surface \(M_h(t)\).
We employ the \( P^1 \) vector-valued Lagrange element space \( V_h^{1}(M_h(t))^3 \) on $M_h(t)$, defined by 
\[ 
V_h^{1}(M_h(t)) = \{ w \in C(M_h(t)) \mid w|_K \in P_1(K), \forall K \in \mathcal{T}_h(t)  \}.
\]
The lowest-order Regge element space on $M_h(t)$ is given by 
\[ 
\Sigma_h^{0}(M_h(t)) = \{ \sigma \in \prod_{K \in \mathcal{T}_h(t)} P_0S_2^0(K) \mid 
i_{K_1,e}^*(\sigma|_{K_1}) = i_{K_2,e}^*(\sigma|_{K_2}), \forall e = K_1 \cap K_2 \in \mathcal{E}_h(t)\}.
\]
We use $\varphi_h(t) = \varphi_h(\cdot, t) \in V_h^1(M_h(0))^3$ to denote the discrete flow map that maps $M_h(0) = M_h$ to $M_h(t)$. 
{Since the evolving discrete surface $M_h(t)$ is piecewise flat, we denote by $n_h$ the piecewise constant normal vector on $M_h(t)$, and note that its second fundamental form vanishes on each triangle. Therefore, on each triangle $K $ of $ M_h(t)$, we have
\[
\nabla_{M_h(t)}^{\mathrm{sym}} v_h = \nabla_{M_h(t)}^{\mathrm{sym}} v_h^{\parallel}, \qquad \forall v_h \in V_h^{1}(M_h(t))^3,
\]
where $v_h^{\parallel} = v_h - (n_h \cdot v_h) n_h$ denotes the tangential component of $v_h$.}
We therefore define the bilinear form $a_{M_h(t)}(\cdot,\cdot)$ on $V_h^1(M_h(t))^3$ by
\[
a_{M_h(t)}(w_h, u_h) = 
\left(\nabla^{\mathrm{sym}}_{M_h(t)} w_h, \nabla^{\mathrm{sym}}_{M_h(t)} u_h\right)_{M_h(t)} + 
\left(w_h \cdot n_h, u_h \cdot n_h\right)_{M_h(t)}, 
\]
where $w_h, u_h \in V_h^1(M_h(t))^3.$

Now, given an evolving metric $g_h(t) \in \Sigma_h$ on $M_h(0) = M_h$, the parametric FEM for \eqref{eq:embed-saddle-PDE} is to find $(\varphi_h(\cdot, t), v_h(\cdot, t), \sigma_h(\cdot, t)) \in V_h^1(M_h(0))^3 \times V_h^1(M_h(t))^3 \times \Sigma_h^{0}(M_h(t)) $ such that 
\begin{subequations}\label{eq:numerical-embedding}  
\begin{align}
\label{eq:numerical-embedding-a}
& \partial_t \varphi_h = v_h \circ \varphi_h, \\
\label{eq:numerical-embedding-b} 
&(\nabla^{\rm sym}_{M_h(t)} v_h, \nabla^{\rm sym}_{M_h(t)} w_h)_{M_h(t)} + 
    (v_h\cdot n_h,w_h \cdot n_h)_{M_h(t)} + 
    (\sigma_h, \nabla^{\rm sym}_{M_h(t)} w_h)_{M_h(t)} = 0, 
    \\ 
\label{eq:numerical-embedding-c} 
&(\nabla^{\rm sym}_{M_h(t)} v_h, \rho_h )_{M_h(t)} = \frac{1}{2}
((\varphi_{h}(t))_* \dot{g}_h, \rho_h)_{M_h(t)}, 
\end{align}
\end{subequations}
holds for all $ w_h \in V_h^{1}(M_h(t))^3$ and $ \rho_h \in \Sigma_h^{0}(M_h(t))$, 
where $(\varphi_{h}(t))_* = ((\varphi_{h}(t))^{-1})^*$ is the pushforward under the discrete flow map $\varphi_h(t): M_h(0) \rightarrow M_h(t)$.


\section{Numerical experiments}\label{sec:numerical}

In this section, we present numerical examples to illustrate the convergence of the proposed schemes \eqref{eq:num-Ric} and \eqref{eq:num-Ric-L2}, as well as the simulation of the Ricci flow. 
{In all the following numerical experiments, the time discretization is performed using a three-step linearly implicit BDF scheme, with a sufficiently small time step to ensure that the time-discretization error is negligible compared with the spatial-discretization error.} 
The numerical experiments are implemented using the open-source finite element library NGSolve \cite{schoberl2014c++}.

\begin{example}[Convergence rates for metric and Gauss curvature]
    \upshape
In the first example, we test the convergence rates in computing the metric and Gauss curvature in the Ricci flow of a revolution metric, which was considered in \cite{rubinstein2005visualizing}. 
For each point $(x,y,z) $ on the the unit sphere $\mathbb{S}^2\subset\mathbb{R}^3$, we define its coordinates $(s,\theta)$ by 
    \[
    s = \frac{\pi}{2} -  \arcsin(z) \in [0, \pi], \quad \text{and} \quad \theta =  \arctan(y/x) \in [0, 2\pi).
    \]
In these coordinates, the revolution metric on $\mathbb{S}^2$ evolving under the Ricci flow can be written as 
\[
g(t;s,\theta)= h(t;s)\,\mathrm{d}s^2 + m(t;s)\,\mathrm{d}\theta^2 .
\]
The coefficient $h$ satisfies
\begin{subequations}\label{eq:h-PDE}
\begin{align}
\partial_t h
= \partial_s\!\left(\frac{\partial_s h}{h}\right)
+ \frac{\partial_s q}{2q}\,\frac{\partial_s h}{h}
+ \left(\frac{\partial_s^2 q}{q} - \frac{1}{2}\frac{(\partial_s q)^2}{q^2}\right),
\label{eq:h-equ}
\end{align}
\end{subequations}
where $q(s)=m(s,0)/h(s,0)$, together with the boundary conditions
\(\partial_s h(0,t)=\partial_s h(\pi,t)=0.\)
Then $m$ is determined by $m(s,t) = (m(s,0)/h(s,0)) h(s,t).$
We consider the following initial values: 
$h(s,0)=\sqrt{\cos^2(s)+9\sin^2(s)}$ and $m(s,0)=\sin^2(s)$ for $s\in[0,\pi]$.

The reference solution \( \hat{h}(s,T) \) and $\hat{m}(s,T) = \frac{m(s,0)}{h(s,0)} \hat{h}(s,T) $ is obtained by solving equation \eqref{eq:h-PDE} using the Lagrange finite elements of degree \( 5 \) and the \( 3 \)-step linearly implicit BDF, with sufficiently small mesh size of \( 2.5 \times 10^{-3} \) and time stepsize of \( 10^{-5} \). The reference solutions for the metric and Gauss curvature are defined as 
    \[
    \hat{g} = \hat{h}(s)\mathrm{d}s^2 + \hat{m}(s)\mathrm{d}\theta^2, \quad
    \hat{\kappa} = -\frac{(\partial_s \hat{m})^2}{2\hat{m}^2\hat{h}} + \frac{\partial_s^2 \hat{m}}{\hat{m}\hat{h}} - \frac{\partial_s \hat{m} \partial_s \hat{h}}{2\hat{m}\hat{h}^2}.
    \]
The errors $\hat e_g=g_h-\hat{g}^{(-l)}$ and $\hat e_\kappa={\kappa_h}-\hat{\kappa}^{(-l)}$ of the numerical solutions at \( T = 0.05 \) are presented in Figure \ref{fig:revolution} for several different mesh sizes \( h = 0.25, 0.2, 0.15, 0.1, 0.08, 0.06 \), with a sufficiently small time stepsize of $10^{-3}$ which guarantees that the error from time discretization can be neglected in observing the convergence rates with respect to $h$. 
The numerical results in Figure \ref{fig:revolution} indicate that the errors of the numerical solutions are approximately \( O(h^{q+1} + h^{r+1}) \), which is consistent with the theoretical results proved in Theorems~\ref{thm:main} and~\ref{thm:main-L2} (up to a logarithmic factor that is typically difficult to observe in numerical experiments). In the case \((r, q) = (0, 1)\), we observe that \(\hat{e}_\kappa = O(h^2)\), which is one order higher than the result established in Theorem~\ref{thm:main}. { A rigorous proof of this special case would be both interesting and challenging, and may require techniques developed in \cite{sun2021analysis,gao2023optimal},} which address optimal-order error analysis for coupled systems of nonlinear PDEs involving incompatible finite element degrees.  {The superconvergence results in~\cite[Theorem 6.5]{gopalakrishnan2023analysis} may also be relevant.}

\begin{figure}[htbp]
    \centering
    \begin{subfigure}{0.47\textwidth}
        \includegraphics[width=\textwidth]{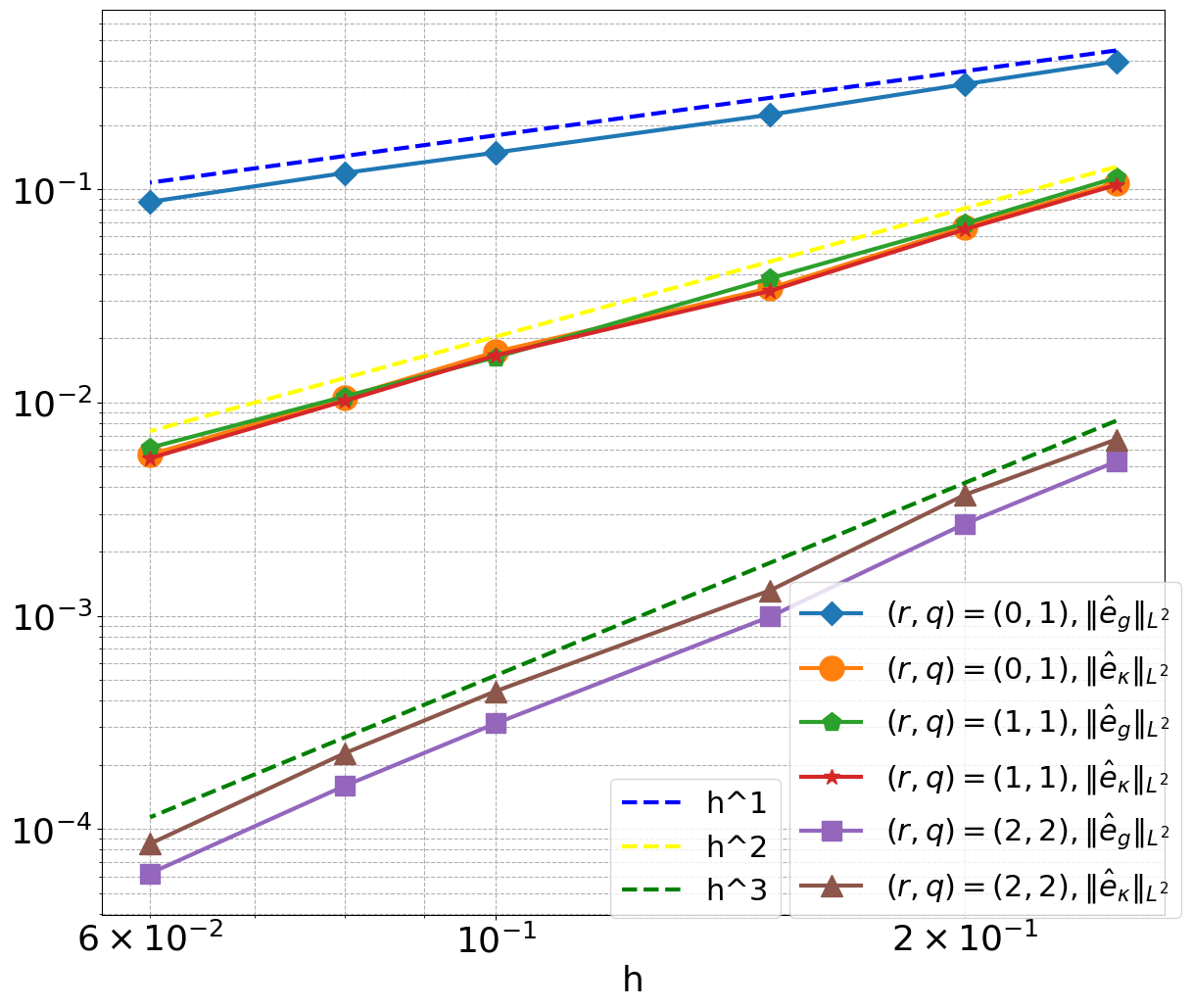}
        \caption{Errors by the numerical scheme \eqref{eq:num-Ric}}
    \end{subfigure}
    \begin{subfigure}{0.47\textwidth}
        \includegraphics[width=\textwidth]{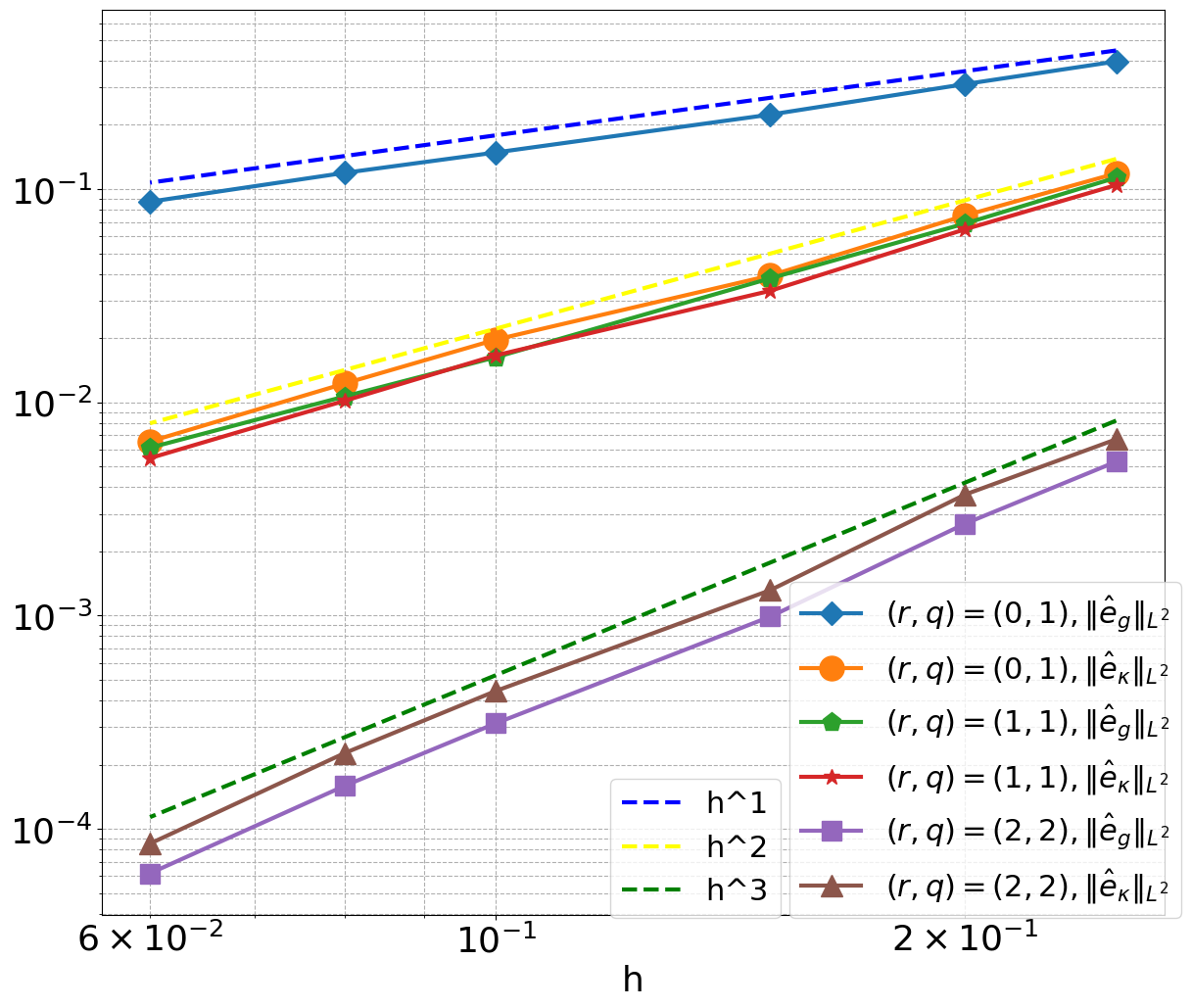}
        \caption{Errors by the numerical scheme \eqref{eq:num-Ric-L2}}
    \end{subfigure}
    \caption{Convergence rates for revolution metric with time stepsize $10^{-3}$. }
    \label{fig:revolution}
\end{figure}

\end{example}

\begin{example}[Numerical simulation of normalized Ricci flow] 
    \label{ex:ellipsoid}
    \upshape 
In the second example, we present numerical simulation and its visualization of the normalized Ricci flow with the following three different initial surfaces: 
{\small 
\begin{subequations}
    \begin{align}
    \label{eq:ellipsoid} 
    \Man &= \Bigg\{ \left( \begin{array}{c}
        \frac{1}{2}\sin \varphi \cos \theta \\[5pt]
        \frac{1}{2} \sin \varphi \sin \theta \\[5pt]
        \cos \varphi
    \end{array} \right) \colon
    \quad \theta \in [0, 2\pi), \quad \varphi \in [0, \pi] \Bigg\}, \\ 
    \label{eq:surfaceA} 
    \Man &= \Bigg\{ \left( \begin{array}{c}
        ( 0.7 \sin \varphi + 0.1 \sin(2\varphi) ) \cos \theta \\[5pt]
        ( 0.7 \sin \varphi + 0.1 \sin(2\varphi) ) \sin \theta \\[5pt]
        0.5 \cos \varphi
    \end{array} \right) \colon
    \quad \theta \in [0, 2\pi), \quad \varphi \in [0, \pi] \Bigg\}, \\ 
    \label{eq:surfaceB}
    \Man &= \Bigg\{ \left( \begin{array}{c}
        2.5\cos(\varphi) \cos \theta \\[5pt]
        2.5\cos(\varphi) \sin \theta \\[5pt]
        (1+0.6(\cos^2(\varphi)-1)^2) \sin(\varphi)
    \end{array} \right) \colon
    \quad \theta \in [0, 2\pi), \quad \varphi \in [-\pi/2, \pi/2] \Bigg\}. 
    \end{align}
\end{subequations}}
The initial values of the metric and Gauss curvature are set using the induced Euclidean metric of the initial surfaces, and the time stepsize is chosen as \( 10^{-3} \). To visualize the Ricci flow, we numerically embed the solution into \( \mathbb{R}^3 \) using the algorithm described in Section~\ref{sec:embedding} (see \eqref{eq:numerical-embedding}). The simulation results of the embedded Ricci flow are shown in Figure~\ref{fig:simulation}. These results demonstrate the effectiveness of the proposed method for simulating the Ricci flow embedded in \( \mathbb{R}^3 \). A rigorous analysis of the embedding algorithm is beyond the scope of this paper and remains an interesting and challenging topic.

\begin{figure}[htbp]
        \centering
        \begin{subfigure}{0.3\textwidth}
            \centering
            \includegraphics[width=0.8\textwidth]{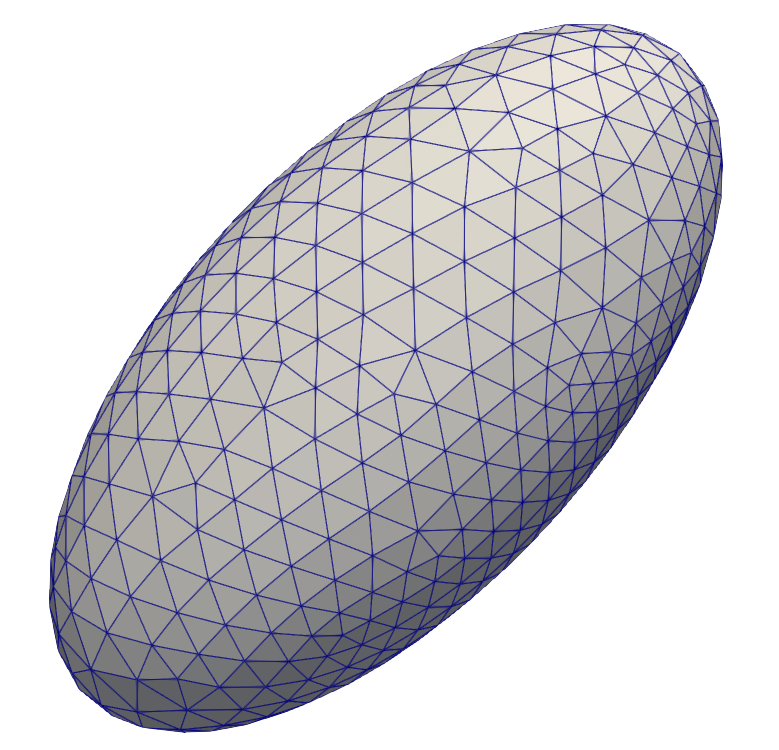}
            \caption{Initial surface in \eqref{eq:ellipsoid}}
        \end{subfigure}
        \hspace{10pt}
        \begin{subfigure}{0.24\textwidth}
            \centering
            \includegraphics[width=\textwidth]{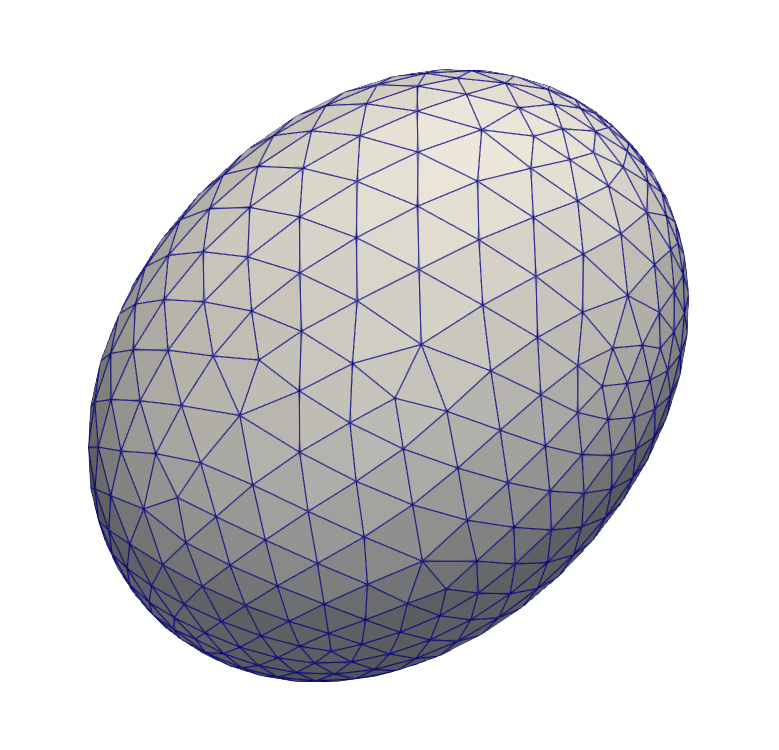}
            \caption{Surface at $t = 0.15$}
        \end{subfigure}
        \hspace{10pt}
        \begin{subfigure}{0.24\textwidth}
            \centering
            \includegraphics[width=\textwidth]{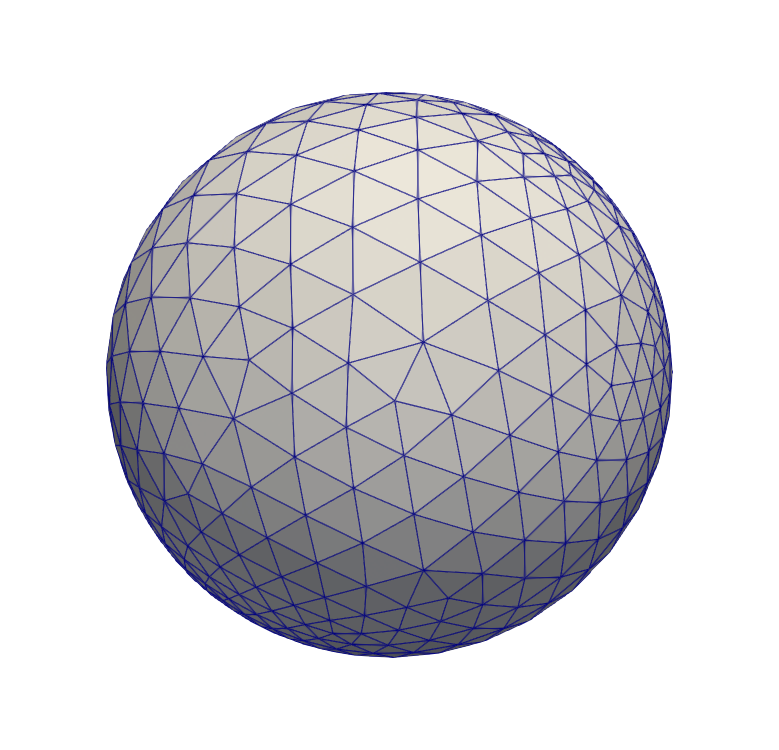}
            \caption{Surface at $t = 0.7$}
        \end{subfigure}
    
        \begin{subfigure}{0.3\textwidth}
            \centering
            \includegraphics[width=0.8\textwidth]{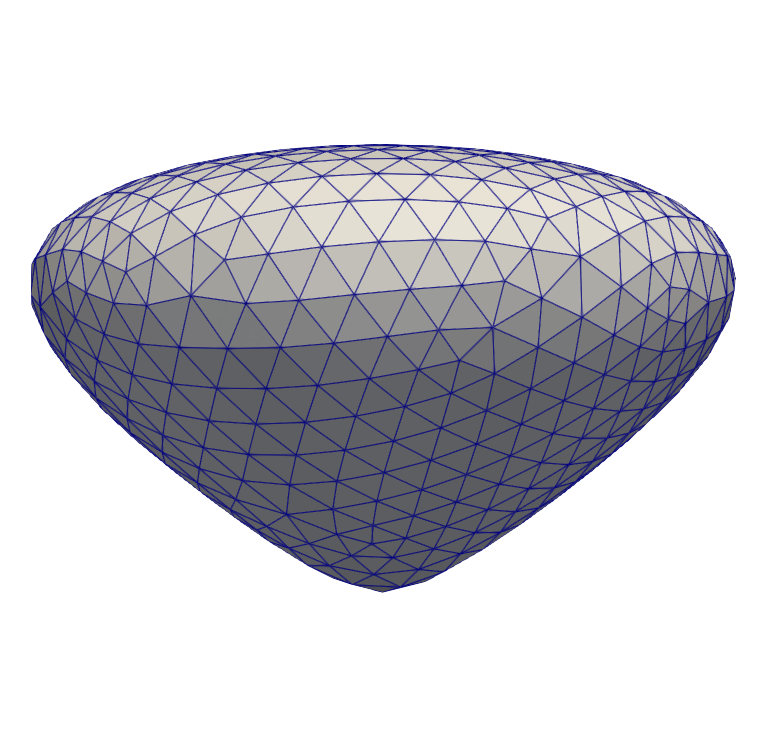}
            \caption{Initial surface in \eqref{eq:surfaceA}}
        \end{subfigure}
        \hspace{10pt}
        \begin{subfigure}{0.24\textwidth}
            \centering
            \includegraphics[width=\textwidth]{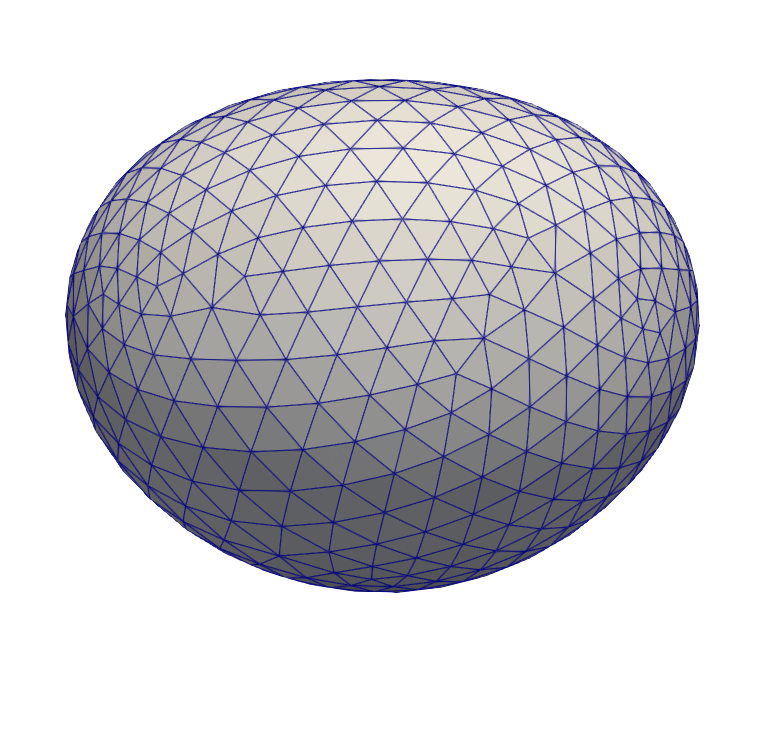}
            \caption{Surface at $t = 0.06$}
        \end{subfigure}
        \hspace{10pt}
        \begin{subfigure}{0.24\textwidth}
            \centering
            \includegraphics[width=\textwidth]{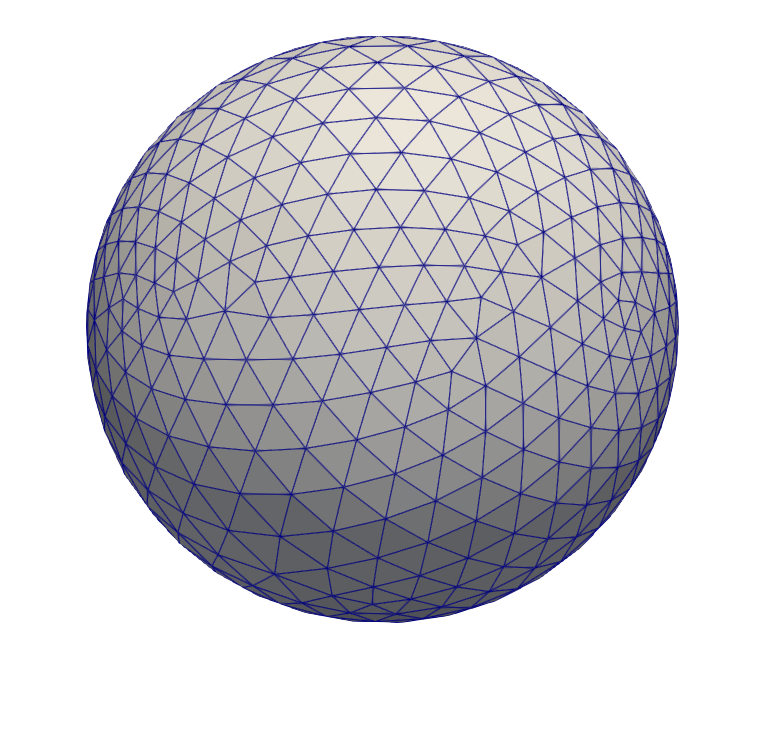}
            \caption{Surface at $t = 0.4$}
        \end{subfigure}
    
        \begin{subfigure}{0.3\textwidth}
            \centering
            \includegraphics[width=0.8\textwidth]{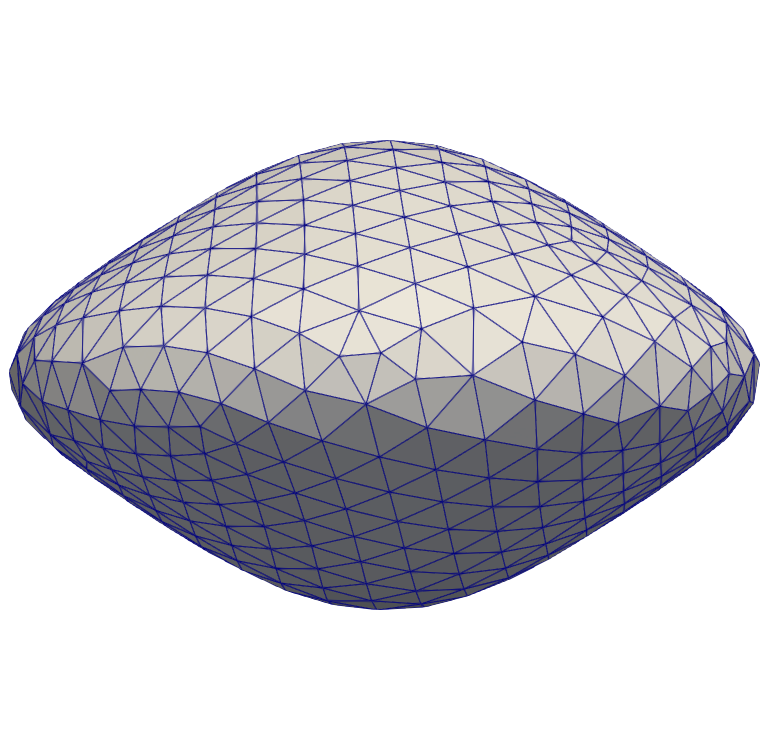}
            \caption{Initial surface in \eqref{eq:surfaceB}}
        \end{subfigure}
        \hspace{10pt}
        \begin{subfigure}{0.24\textwidth}
            \centering
            \includegraphics[width=\textwidth]{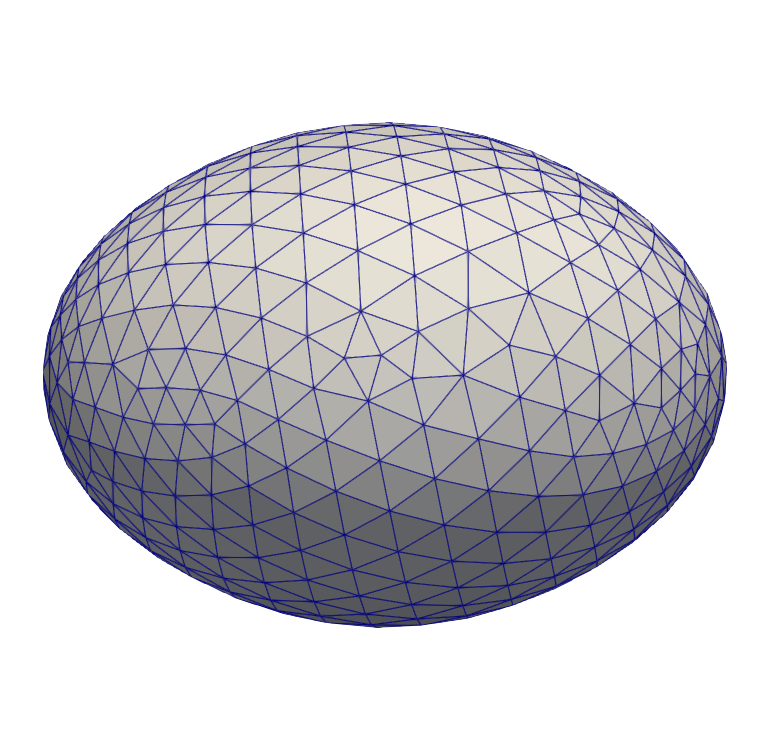}
            \caption{Surface at $t = 0.3$}
        \end{subfigure}
        \hspace{10pt}
        \begin{subfigure}{0.24\textwidth}
            \centering
            \includegraphics[width=\textwidth]{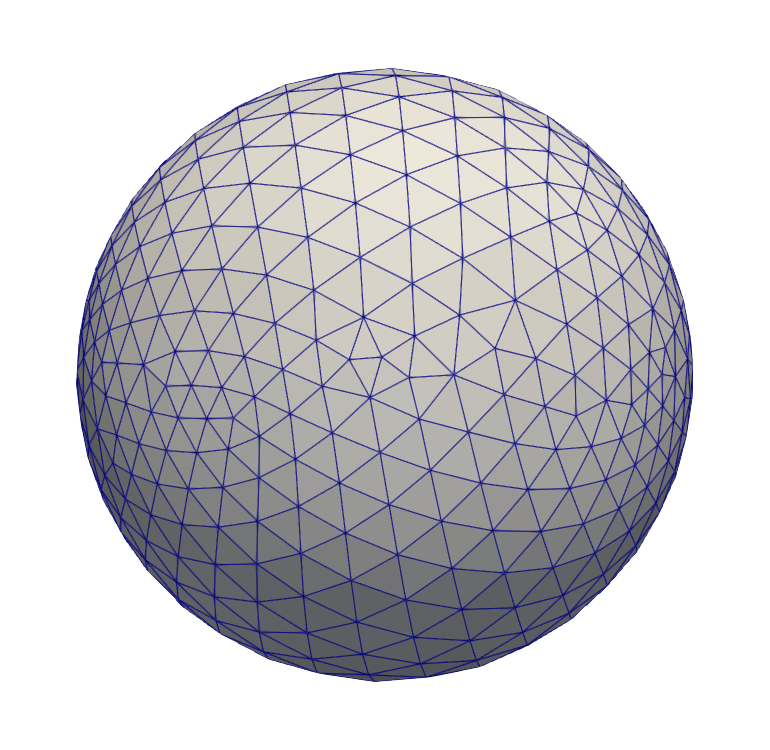}
            \caption{Surface at $t = 4.3$}
        \end{subfigure}
    
\caption{Numerical simulation of normalized Ricci flow.}
        \label{fig:simulation}
\end{figure}

\end{example}

    

\section{Conclusion} 

We have proved the convergence of a finite element discretization for the two-dimensional Ricci flow by reformulating it as a solution-driven metric evolution. In this formulation, the evolution of the metric is governed by the Gaussian curvature, which satisfies a parabolic equation depending on the evolving metric. The proposed numerical method preserves key geometric structures of the Ricci flow, including area conservation and the Gauss--Bonnet theorem. To analyze the convergence of the numerical solutions, we adapted a matrix-vector formulation originally developed for solution-driven surface evolution under extrinsic curvature flows and leveraged the parabolic structure of the equation governing the Gaussian curvature. 
{Our FEM semi-discretization yields a DAE system that we solved with linearly implicit BDF methods.  A fully discrete error analysis should be possible if one combines our spatial estimates with established techniques (see, e.g., \cite[(5.43)]{dziuk2013finite} or \cite[Section~6]{kovacs2019convergent}). We plan to further investigate this in future work.}

{
Evolution equations for intrinsic curvature are a central tool in the analysis of intrinsic curvature flows.
For many flows---such as the Calabi flow, the 3D Yamabe flow, and the 3D Ricci flow---the intrinsic curvature evolution typically has a parabolic structure.
Therefore the analytical framework presented here—including the reformulation into a solution-driven metric evolution and the error analysis based on the matrix–vector formulation—offers a promising approach for studying the convergence of numerical approximations of other intrinsic geometric flows. This will be the focus of our future work.
} 


\bibliographystyle{abbrv}
\bibliography{reference}

\newpage
\appendix

\begin{center}
{\large\bf Supplementary material} \vspace{10pt}
\end{center}
\renewcommand{\thesection}{\Alph{section}}

This supplementary material complements the paper by providing some fundamental results that will be used in this paper. 

\section{{Error estimates for the time derivative of the Ritz projection}} \label{sec:Ritz}
\renewcommand{\theequation}{A.\arabic{equation}}
\renewcommand{\thesubsection}{A.\arabic{subsection}}

{This section proves the error estimates for the time derivative of the Ritz projection, \(\partial_t R_h u\), stated in Lemma \ref{lm:app-Ritz}.
The \(H^1\) and \(L^2\) error estimates in \eqref{eq:app-Ritz-H1} and \eqref{eq:app-Ritz-L2} are derived in the following two parts.
}

\noindent{\it Part 1.} \textbf{\( H^1 \) error estimate of  \( \partial_t R_h u(t) \)}: 
We  differentiate \eqref{eq:Ritz} with respect to time: 
\begin{subequations}\label{eq:Ritz-1-pt}  
\begin{align}
\label{eq:Ritz-1-pt-a} 
\hspace{-3mm}
(\nabla_{g_h^*} \partial_t R_h u ,& \nabla_{g_h^*} \phi_h)_{g_h^*, M_h} + 
\int_{M_h} \partial_t( \sqrt{\det g_h^*}(g_h^*)^{-1}) \nabla R_h u \cdot \nabla \phi_h\mathrm{d}x \notag \\ 
& = (\nabla_{g} \partial_t u , \nabla_{g} \phi_h^{(l)})_{g, \mathcal{M}} +  
\int_{M_h} \partial_t( \sqrt{\det g^{(-l)}}(g^{(-l)})^{-1}) \nabla u^{(-l)} \cdot \nabla \phi_h\mathrm{d}x, 
\end{align}
\begin{align} 
\label{eq:Ritz-1-pt-b} 
(( \partial_t R_h u )^{(l)}, 1)_{\mathcal{M}} &= (\partial_t u , 1)_{\mathcal{M}}.
\end{align}
\end{subequations} 
Recall $I_h u^{(-l)}(t) \in V_h$ is the Lagrange interpolant.  It satisfies 
\[ \partial_t I_h u^{(-l)} = I_h \partial_t u^{(-l)} = I_h (\partial_t u)^{(-l)} \in V_h. \]
Substituting $\partial_t I_h u^{(-l)}$ into \eqref{eq:Ritz-1-pt-a}, we have 
\[ 
\begin{aligned}
    & \quad (\nabla_{g_h^*} (\partial_t R_h u  - \partial_t I_h u^{(-l)}) ,\nabla_{g_h^*} \phi_h)_{g_h^*, M_h} \\ 
    & = (\nabla_{g} \partial_t u , \nabla_{g} \phi_h^{(l)})_{g, \mathcal{M}}
    - (\nabla_{g_h^*} (\partial_t u)^{(-l)} , \nabla_{g_h^*} (\phi_h))_{g_h^*, {M_h}} \\ 
    & + (\nabla_{g_h^*} ((\partial_t u)^{(-l)}- \partial_t I_h u^{(-l)}) , \nabla_{g_h^*} (\phi_h))_{g_h^*, {M_h}}\\ 
    & + \int_{M_h} 
    \big(\partial_t( \sqrt{\det g^{(-l)}}(g^{(-l)})^{-1}) \nabla u^{(-l)} 
    - \partial_t( \sqrt{\det g_h^*}(g_h^*)^{-1}) \nabla R_h u\big)
    \cdot \nabla \phi_h \mathrm{d}x\\ 
    &= I_1 + I_2 + I_3. 
\end{aligned}
\]
We use \eqref{eq:A-g-g*} in Lemma \ref{lm:norm-equ} (Norm equivalence of $g_h^*$) to derive an estimate for \( I_1 \): 
\[ 
\begin{aligned}
    I_1 & = (\nabla_{g} \partial_t u , \nabla_{g} \phi_h^{(l)})_{g, \mathcal{M}}
    - (\nabla_{g_h^*} (\partial_t u)^{(-l)} , \nabla_{g_h^*} (\phi_h))_{g_h^*, {M_h}} \\ 
    & \lesssim h^{r+1} \|\nabla (\partial_t u)^{(-l)} \|_{L^2(M_h)} 
    \|\nabla (\phi_h) \|_{L^2(M_h)}.
\end{aligned}
\]
For \( I_2 \), we employ the interpolation approximation property of \( \partial_t I_h u^{(-l)} = I_h (\partial_t u)^{(-l)} \in V_h \):
\[ 
\begin{aligned}
    I_2 &= (\nabla_{g_h^*} ((\partial_t u)^{(-l)}- \partial_t I_h u^{(-l)}) , \nabla_{g_h^*} (\phi_h))_{g_h^*, {M_h}} 
    \lesssim h^q  \|\nabla (\phi_h) \|_{L^2(M_h)}. 
\end{aligned}
\] 
For \( I_3 \), we utilize \eqref{eq:det-inv-app} and \eqref{eq:prop-g_h^*} alongside the \( H^1 \) error estimate of the Ritz projection \eqref{eq:H1-Ritz}: 
\[ 
\begin{aligned}
    I_3 & = \int_{M_h} 
\big(\partial_t( \sqrt{\det g^{(-l)}}(g^{(-l)})^{-1}) \nabla u^{(-l)} 
- \partial_t( \sqrt{\det g_h^*}(g_h^*)^{-1}) \nabla R_h u\big)
\cdot \nabla \phi_h \mathrm{d}x\\ 
& \lesssim (h^{r+1} + h^q) \|\nabla (\phi_h) \|_{L^2(M_h)}.
\end{aligned}
\]
Let \( \phi_h = (\partial_t R_h u - \partial_t I_h u^{(-l)}) \). By combining the estimates for \( I_1 \), \( I_2 \), and \( I_3 \), we obtain the \( H^1 \) error estimate for \( \partial_t R_h u(t) \): 
\begin{equation}\label{eq:H1-Ritz-pt} 
    \| \nabla( ( \partial_t u(t))^{(-l)} - \partial_t R_h u(t) ) \|_{L^2(M_h)} \lesssim h^q + h^{r+1}.
\end{equation} 
\noindent{\it Part 2.} \textbf{\( L^2 \) error estimate of \( \partial_t R_h u(t) \)}: 
Let  
\[ \Eh(t) = \partial_t R_h u(t) - \partial_t u^{(-l)}(t), \quad \text{and} \quad ((\Eh(t))^{(l)}, 1)_{\mathcal{M}} = 0, \]  
where the last condition follows from \eqref{eq:Ritz-1-pt-b}. Consider the following elliptic problem: find \( \Psi: \mathcal{M} \to \mathbb{R} \) such that  
\[ 
(\nabla_g \Psi, \nabla_g v)_{g, \mathcal{M}} = ( \Eh^{(l)}, v )_{\mathcal{M}}, 
\quad \forall v \in H^1(\mathcal{M}). 
\]  
According to the regularity theory of elliptic equations, we have \( \Psi \in H^2(\mathcal{M}) \) and it satisfies
\begin{equation}\label{eq:ell-regular-pt} 
\|\Psi\|_{H^2(\mathcal{M})} \lesssim \| (\Eh)^{(l)} \|_{L^2(\mathcal{M})} \lesssim \|\Eh\|_{L^2(M_h)}.  
\end{equation}
Using  \eqref{eq:g-g*-equi-L2} in Lemma \ref{lm:norm-equ} (Norm equivalence of $g_h^*$), we have
\[ 
\begin{aligned}
    \|\Eh\|_{L^2(M_h)}^2 & \eqsim ( (\Eh)^{(l)} , (\Eh)^{(l)}  )_{\mathcal{M}} 
    = (\nabla_g \Psi, \nabla_g (\Eh)^{(l)} )_{g, \mathcal{M}} \\ 
    & = (\nabla_g \Psi, \nabla_g ( \partial_t R_h u)^{(l)} )_{g, \mathcal{M}}
    - (\nabla_g \Psi, \nabla_g \partial_t u )_{g, \mathcal{M}} \\ 
    & = (\nabla_g \Psi, \nabla_g ( \partial_t R_h u)^{(l)} )_{g, \mathcal{M}} - 
    (\nabla_{g_h^*} (\Psi)^{(-l)}, \nabla_{g_h^*} (\partial_t R_h u) )_{g_h^*, {M_h}} \\ 
    & + (\nabla_{g_h^*} (\Psi)^{(-l)}, \nabla_{g_h^*} (\partial_t R_h u) )_{g_h^*, {M_h}}
    - (\nabla_g \Psi, \nabla_g \partial_t u )_{g, \mathcal{M}} = I_1 + I_2. 
\end{aligned}
\]
We use \eqref{eq:A-g-g*} in Lemma \ref{lm:norm-equ} (Norm equivalence of $g_h^*$) to derive an estimate for $I_1$:
\[ \begin{aligned}
    I_1 & = (\nabla_g \Psi, \nabla_g ( \partial_t R_h u)^{(l)} )_{g, \mathcal{M}} - 
    (\nabla_{g_h^*} (\Psi)^{(-l)}, \nabla_{g_h^*} (\partial_t R_h u) )_{g_h^*, {M_h}} \\ 
    & \lesssim h^{r+1} \| \nabla  (\partial_t R_h u )\|_{L^2(M_h)} \| \nabla (\Psi)^{(-l)}\|_{L^2(M_h)}
    \lesssim h^{r+1}  \|\Eh\|_{L^2(M_h)}. 
\end{aligned} \]
For $I_2$, recall \eqref{eq:Ritz-1-pt-a}. We have
\[ 
\begin{aligned} 
        (\nabla_{g_h^*} & \partial_t R_h u , \nabla_{g_h^*} I_h (\Psi)^{(-l)})_{g_h^*, M_h} 
        - (\nabla_{g} \partial_t u , \nabla_{g} (I_h (\Psi)^{(-l)})^{(l)})_{g, \mathcal{M}} \notag \\ 
        & = \int_{M_h} \big(\partial_t( \sqrt{\det g^{(-l)}}(g^{(-l)})^{-1}) \nabla u^{(-l)}  
        - \partial_t( \sqrt{\det g_h^*}(g_h^*)^{-1}) \nabla R_h u \big)\cdot \nabla I_h (\Psi)^{(-l)}\mathrm{d}x.
 \end{aligned}
\]
Substituting it into $I_2$, we have
\[\begin{aligned}
I_2 &= (\nabla_{g_h^*} ((\Psi)^{(-l)} - I_h (\Psi)^{(-l)}), \nabla_{g_h^*} (\partial_t R_h u) )_{g_h^*, {M_h}}
- (\nabla_g (\Psi - (I_h (\Psi)^{(-l)})^{(l)}), \nabla_g \partial_t u )_{g, \mathcal{M}} \\ 
& + \int_{M_h} \partial_t( \sqrt{\det g^{(-l)}}(g^{(-l)})^{-1}) \big(\nabla u^{(-l)} - \nabla R_h u \big)\cdot \nabla (I_h (\Psi)^{(-l)} - (\Psi)^{(-l)})\mathrm{d}x. 
\\
& + \int_{M_h} \big(\partial_t( \sqrt{\det g^{(-l)}}(g^{(-l)})^{-1})   
- \partial_t( \sqrt{\det g_h^*}(g_h^*)^{-1}) \big) \nabla R_h u \cdot \nabla I_h (\Psi)^{(-l)}\mathrm{d}x\\
& + \int_{M_h} \partial_t( \sqrt{\det g^{(-l)}}(g^{(-l)})^{-1}) \big(\nabla u^{(-l)} - \nabla R_h u \big)\cdot \nabla (\Psi)^{(-l)}\mathrm{d}x
 = A_1 + A_2 + A_3 + A_4. 
\end{aligned} \]
For $A_1$, we utilize \eqref{eq:A-g-g*} in Lemma \ref{lm:norm-equ} (Norm equivalence of $g_h^*$) and the $H^1$ error estimate of $\partial_t R_h u$ \eqref{eq:H1-Ritz-pt} alongside the elliptic regularity \eqref{eq:ell-regular-pt}: 
\[ 
\begin{aligned}
    \hspace{-5pt}
    A_{1} & = 
(\nabla_{g_h^*} ((\Psi)^{(-l)} - I_h (\Psi)^{(-l)}), \nabla_{g_h^*} (\partial_t R_h u - (\partial_t u)^{(-l)} ) )_{g_h^*, {M_h}} \\ 
& + (\nabla_{g_h^*} ((\Psi)^{(-l)} - I_h (\Psi)^{(-l)}), \nabla_{g_h^*} (\partial_t u)^{(-l)}  )_{g_h^*, {M_h}} 
- (\nabla_g (\Psi - (I_h (\Psi)^{(-l)})^{(l)}), \nabla_g \partial_t u )_{g, \mathcal{M}} \\
& \lesssim (h^{q+1} + h^{r+1}) \|\Psi\|_{H^2(\mathcal{M})}\lesssim   (h^{q+1} + h^{r+1}) \|\Eh\|_{L^2(M_h)}.
\end{aligned}
\]
For $A_2$, we utilize $H^1$ error estimate of $R_h u$ \eqref{eq:H1-Ritz} and elliptic regularity \eqref{eq:ell-regular-pt}: 
\[ 
\begin{aligned}
    A_2 &= \int_{M_h} \partial_t( \sqrt{\det g^{(-l)}}(g^{(-l)})^{-1}) \big(\nabla u^{(-l)} - \nabla R_h u \big)\cdot \nabla (I_h (\Psi)^{(-l)} - (\Psi)^{(-l)})\mathrm{d}x \\ 
    & \lesssim (h^{q+1} + h^{r+2}) \|\Psi\|_{H^2(\mathcal{M})}
    \lesssim  (h^{q+1} + h^{r+2})  \|\Eh\|_{L^2(M_h)}.
\end{aligned} 
\]
For $A_3$, we utilize \eqref{eq:det-inv-app} and \eqref{eq:prop-g_h^*} alongside the elliptic regularity \eqref{eq:ell-regular-pt}: 
\[ 
\begin{aligned}
    A_3 & = \int_{M_h} \big(\partial_t( \sqrt{\det g^{(-l)}}(g^{(-l)})^{-1})   
    - \partial_t( \sqrt{\det g_h^*}(g_h^*)^{-1}) \big) \nabla R_h u \cdot \nabla I_h (\Psi)^{(-l)}\mathrm{d}x \\ 
    & \lesssim h^{r+1} \| \nabla (\Psi)^{(-l)} \|_{L^2(M_h)} \| \nabla R_h u  \|_{L^2(M_h)} 
    \lesssim  h^{r+1}  \|\Eh\|_{L^2(M_h)}.
\end{aligned}
\]
Finally, for  $A_4$, we compute
\[ \begin{aligned}
    & \quad \partial_t( \sqrt{\det g^{(-l)}}(g^{(-l)})^{-1}) \\ 
    & = 
    ( \partial_t \sqrt{\det g^{(-l)}}) (g^{(-l)})^{-1} + 
     \sqrt{\det g^{(-l)}}(\partial_t(g^{(-l)})^{-1} )\\
     & = \frac{1}{2}  \sqrt{\det g^{(-l)}} (g^{(-l)})^{-1} : ( \partial_t g^{(-l)}) (g^{(-l)})^{-1}
     - \sqrt{\det g^{(-l)}} (g^{(-l)})^{-1}   ( \partial_t g^{(-l)})  (g^{(-l)})^{-1} \\ 
     & = \Big(\frac{1}{2}  (g^{(-l)})^{-1} : ( \partial_t g^{(-l)})  {\rm I} - (g^{(-l)})^{-1}   ( \partial_t g^{(-l)})\Big) \sqrt{\det g^{(-l)}}(g^{(-l)})^{-1} \\
     & = \sigma \sqrt{\det g^{(-l)}}(g^{(-l)})^{-1}, 
\end{aligned} \]
where we denote 
\[  \sigma(t) = \frac{1}{2}  (g^{(-l)}(t))^{-1} : ( \partial_t g^{(-l)}(t)) {\rm I} - (g^{(-l)}(t))^{-1}   ( \partial_t g^{(-l)}(t)). \]
Then, we have 
\[ \begin{aligned}
    A_4 &=  \int_{M_h} \sigma \sqrt{\det g^{(-l)}} (g^{(-l)})^{-1}
  \nabla (u^{(-l)} -  R_h u )\cdot \nabla (\Psi)^{(-l)} \mathrm{d}x \\ 
    & = ( \sigma^{(l)}  \nabla_g (u - (R_h u)^{(l)} ), \nabla_g \Psi )_{g, \mathcal{M}} \\ 
    & = - (   (u - (R_h u)^{(l)} ), \nabla_g \cdot (\sigma^{(l)}  \nabla_g \Psi) )_{g, \mathcal{M}},
\end{aligned} \]
where 
\[ \sigma^{(l)} = \big(
\frac{1}{2} (g^{(-l)})^{-1} : ( \partial_t g^{(-l)}) \delta_k^l - 
(g^{(-l)})^{l,j}   ( \partial_t g^{(-l)})_{j,k}    
\big) \mathrm{d}x^k \otimes \partial_l \]
is a $(1,1)$-tensor field on $\mathcal{M}$ with $W^{1, \infty}$ regularity. Therefore, combining with elliptic regularity \eqref{eq:ell-regular-pt} and the $L^2$ estimate of $R_h u $ \eqref{eq:L2-Ritz}, we have 
\[ A_4 \lesssim (h^{q+1} + h^{r+1}) \| \Psi\|_{H^2(\mathcal{M})}  \lesssim (h^{q+1} + h^{r+1}) \|\Eh\|_{L^2(M_h)}. \]
By combining the estimates for  \( I_1 \), \( I_2 \), and \( A_1, A_2, A_3, A_4 \), we obtain the \( L^2 \) error estimate for \( \partial_t R_h u(t) \): 
\begin{equation}\label{eq:L2-Ritz-pt} 
    \|  ( \partial_t u(t))^{(-l)} - \partial_t R_h u(t)  \|_{L^2(M_h)} \lesssim h^{q+1} +  h^{r+1}.
\end{equation}

\section{Proofs of Lemma \ref{lm:stab-Pgh-Pgh*} and Lemma \ref{lm:app-Projection}} \label{sec:P-Stab}
\renewcommand{\theequation}{C.\arabic{equation}}
\renewcommand{\thesubsection}{C.\arabic{subsection}}

This section establishes the proofs of Lemma \ref{lm:stab-Pgh-Pgh*} and Lemma \ref{lm:app-Projection}. To begin with, we first present the following norm equivalence property for the Regge element.
\begin{lemma}\label{lm:norm-equi-Regge} 
    Under the boundedness results in \eqref{eq:bound-g*} and \eqref{eq:bound-gh}, for any triangle $K \in \mathcal{T}_h$ and $s\in [1, +\infty]$, the following norm equivalence holds{\rm:} 
\begin{equation}\label{eq:norm-equ-gh} 
\|\sigma_h\|_{L^s(K)} \eqsim 
h^{2/s - 2}\sum_{\alpha = 1}^{{\rm dim}P_{r}S^0_2 (K)} 
|\N_{\alpha, g_h}(\sigma_h)|, 
\quad \forall \sigma_h \in P_{r}S^0_2 (K), t \in [0, t^*], 
\end{equation} 
where the linear functionals \(\mathcal{N}_{\alpha, g_h}: P_{r}S^0_2 (K) \rightarrow \mathbb{R}\), for \(\alpha = 1, \dots, \dim P_{r}S^0_2 (K)\), are the degrees of freedom, defined by
\begin{equation}\label{eq:N_alpha} 
    \mathcal{N}_{\alpha, g_h}(\sigma_h) = 
    (\sigma_h, \rho_{\alpha})_{g_h,K},
    \quad \text{or} \quad
    \mathcal{N}_{\alpha, g_h}(\sigma_h) = 
    \langle \sigma_h(\tau_{g_h}, \tau_{g_h}), q_\alpha \rangle_{g_h, e}.
\end{equation}
Here $\rho_{\alpha} \in P_{r-1}S^0_2 (K)$ and $q_{\alpha} \in P_{r}(e)$ satisfy 
\begin{equation}\label{eq:esti_rho_q} 
\| \rho_\alpha \|_{L^\infty(K)} \eqsim 1, 
\quad \text{and} \quad 
\| q_\alpha\|_{L^\infty(K)} \eqsim h. 
\end{equation} 
Similar results also hold for $g_h^*$, i.e., 
\begin{equation}\label{eq:norm-equ-gh*} 
    \|\sigma_h\|_{L^s(K)} \eqsim 
    h^{2/s - 2}\sum_{\alpha = 1}^{{\rm dim}P_{r}S^0_2 (K)} 
    |\N_{\alpha, g_h^*}(\sigma_h)|, 
    \quad \forall\sigma_h \in P_{r}S^0_2 (K), t \in [0, T]. 
\end{equation} 
\end{lemma}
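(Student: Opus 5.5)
The plan is a scaling argument to a reference triangle combined with a compactness argument over the admissible metrics. Fix $K\in\mathcal{T}_h$ and let $F_K:\hat K\to K$ be the affine map from the reference triangle, with $\|DF_K\|\eqsim h$ and $\|DF_K^{-1}\|\eqsim h^{-1}$ by quasi-uniformity. For $\sigma_h\in P_rS^0_2(K)$ we set $\hat\sigma=F_K^*\sigma_h\in P_rS^0_2(\hat K)$ and $\hat g=F_K^*g_h$. In matrix form $\hat\sigma=DF_K^\top\sigma_h DF_K$, so $\|\sigma_h\|_{L^s(K)}\eqsim h^{-2}h^{2/s}\|\hat\sigma\|_{L^s(\hat K)}$. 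This accounts for the factor $h^{2/s-2}$: one $h^{2/s}$ comes from the Jacobian, and $h^{-2}$ comes from undoing the two $DF_K$ factors. The same scaling applied to $h^{-2}\hat g$ gives a metric that is uniformly bounded with bounded inverse, by \eqref{eq:bound-gh} (respectively \eqref{eq:bound-g*}).

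Next we show that the degrees of freedom transform with a uniform scaling. For interior DOFs, $(\sigma_h,\rho_\alpha)_{g_h,K}=\int_K \Tr(g_h^{-1}\sigma_h g_h^{-1}\rho_\alpha)\sqrt{\det g_h}\,\mathrm{d}x$. We write $g_h=h^2 DF_K^{-\top}\tilde g DF_K^{-1}$ with $\tilde g:=h^{-2}\hat g$ uniformly equivalent to the identity. With $\rho_\alpha$ chosen as push-forwards of fixed reference functions scaled so that $\|\rho_\alpha\|_{L^\infty}\eqsim1$ (i.e. $\rho_\alpha=h^{-2}DF_K^{-\top}\hat\rho_\alpha DF_K^{-1}$), the DOF equals $c\,h^{-2}\cdot h^2\cdot(\hat\sigma,\hat\rho_\alpha)_{\tilde g,\hat K}$ up to powers that combine with the $h^{2/s-2}$. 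Edge DOFs are handled the same way using $\mathrm{d}s_{g_h}$ and $\tau_{g_h}$. The tangential quantity $\sigma_h(\tau_{g_h},\tau_{g_h})\,\mathrm{d}s_{g_h}$ is $\sigma_h(\tau,\tau)\,\mathrm{d}s/\sqrt{g_h(\tau,\tau)}$, and with $q_\alpha$ of size $h$ it scales exactly like the interior DOFs. We track the powers so that $h^{2/s-2}\sum_\alpha|\mathcal{N}_{\alpha,g_h}(\sigma_h)|\eqsim\sum_\alpha|\hat{\mathcal N}_{\alpha,\tilde g}(\hat\sigma)|$, where $\hat{\mathcal N}_{\alpha,\tilde g}$ are the weighted Regge DOFs on $\hat K$ with respect to the metric $\tilde g$.

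It then suffices to prove $\|\hat\sigma\|_{L^s(\hat K)}\eqsim\sum_\alpha|\hat{\mathcal N}_{\alpha,\tilde g}(\hat\sigma)|$ uniformly over all $\tilde g$ in the set $\mathcal G=\{\tilde g:\ \|\tilde g\|_{L^\infty}\le C,\ \|\tilde g^{-1}\|_{L^\infty}\le C\}$. Here $\tilde g$ is a polynomial of degree $r$ (pulled back from $\Sigma_h$), so $\mathcal G$ is a compact subset of a finite-dimensional space. On the finite-dimensional space $P_rS^0_2(\hat K)$ all norms are equivalent, so the task is to show that $\hat\sigma\mapsto\sum_\alpha|\hat{\mathcal N}_{\alpha,\tilde g}(\hat\sigma)|$ is a norm for each $\tilde g\in\mathcal G$, and that the two-sided constants depend continuously on $\tilde g$. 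Compactness of $\mathcal G$ together with continuity of the DOFs in $\tilde g$ then gives uniform constants. Upper bounds are immediate from Hölder's inequality and the bounds on $\tilde g$.

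The main obstacle is unisolvence for a non-Euclidean, spatially varying $\tilde g$. Vanishing of all weighted DOFs gives $\int_e\hat\sigma(\tau,\tau)\,w\,v\,\mathrm{d}s=0$ for all $v\in P_r(e)$, where $w=g(\tau,\tau)^{-1/2}$ is a positive but non-polynomial weight. It also gives $\int\Tr(\tilde g^{-1}\hat\sigma\tilde g^{-1}\hat\rho)\sqrt{\det\tilde g}=0$, which is a weighted pairing with a non-polynomial kernel. I would first handle these weights by a perturbation argument around constant metrics. On a small element (which is the relevant case, since the metric is smooth), $\tilde g$ is close to its value at the barycenter, and for constant metrics unisolvence of the Regge DOFs follows from \cite{li2018regge} by a linear change of variables. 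Compactness of the set of constant SPD matrices with bounded inverse gives uniform constants, and a small $L^\infty$ perturbation of $\tilde g$ perturbs the DOF functionals by a small multiple of $\|\hat\sigma\|$, preserving the lower bound. If the variation of $\tilde g$ on $K$ is not small a priori, I would instead exploit that $g_h$ and $g_h^*$ are $O(h^{1/2-1/p})$-close in $L^\infty$ by \eqref{eq:bound-e_g-e_k} and inverse estimates. Since $g_h^*$ is close to the smooth $g^{(-l)}$, the scaled metric is nearly constant on each element for small $h$, and the perturbation argument applies.
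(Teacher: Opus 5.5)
Your proposal is correct. It shares the paper's skeleton: pull back by $F_K$, rescale $\hat g_h$ by $h^{-2}$, and pull back the degrees of freedom. It differs at the decisive step.

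\textbf{How your route differs from the paper's.} The paper obtains $\|\hat\sigma_h\|_{L^s(\hat K)}\eqsim\sum_\alpha|\hat{\mathcal{N}}_\alpha(\hat\sigma_h)|$ simply from norm equivalence in a finite-dimensional space together with \eqref{eq:hatg}. That yields the upper bound. It does not by itself give unisolvence, let alone uniform constants, because the interior moments carry the variable, non-polynomial weight $\tilde g^{-1}(\cdot)\tilde g^{-1}\sqrt{\det\tilde g}$. You identify exactly this point and close it by perturbation from constant metrics:
\begin{itemize}
\item For constant metrics, unisolvence reduces to the Euclidean Regge element by a linear change of variables, and the constants are uniform by compactness of the set of bounded SPD matrices with bounded inverse.
\item A small $L^\infty$ perturbation of $\tilde g$ then preserves the lower bound.
\end{itemize}
This buys a rigorous lower bound. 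The price is that you use more than the stated boundedness \eqref{eq:bound-gh}:
\begin{itemize}
\item For $g_h$ you need $\|e_g\|_{L^\infty}\lesssim h^{-2/p}\|e_g\|_{L^p}\le h^{1/2-1/p}$, from \eqref{eq:bound-e_g-e_k} and an inverse estimate. This is available on $[0,t^*]$, and analogously one gets $h^{1/2}$ under \eqref{eq:bound-e_g-e_k-L2}.
\item For $g_h^*$, the $O(h)$ variation on each element follows from smoothness of $g$ and \eqref{eq:prop-g_h^*}.
\end{itemize}

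\textbf{Two refinements to the argument.}
\begin{itemize}
\item Drop your first formulation, the compactness over all bounded polynomial metrics $\mathcal G$. It presupposes unisolvence for every $\tilde g\in\mathcal G$, which is precisely the unproven point.
\item The edge moments need no perturbation at all. Testing with $v=\hat\sigma(\hat\tau,\hat\tau)|_{\hat e}\in P_r(\hat e)$ controls the tangential trace uniformly for any weight bounded above and below. Only the inf-sup of the weighted interior pairing between the Regge bubbles and $P_{r-1}S^0_2$ requires near-constancy of $\tilde g$.
\end{itemize}

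\textbf{Fix the powers of $h$.} To get $\|\rho_\alpha\|_{L^\infty(K)}\eqsim 1$ you need $F_K^*\rho_\alpha=h^2\hat\rho_\alpha$, i.e. $\rho_\alpha=h^{2}DF_K^{-\top}\hat\rho_\alpha DF_K^{-1}$, not $h^{-2}$. Together with $q_\alpha=h\,\hat q_\alpha\circ F_K^{-1}$, one then has the exact identity $\mathcal{N}_{\alpha,g_h}(\sigma_h)=\hat{\mathcal{N}}_{\alpha,\tilde g}(\hat\sigma)$, with no power of $h$. The factor $h^{2/s-2}$ enters only through $\|\sigma_h\|_{L^s(K)}\eqsim h^{2/s-2}\|\hat\sigma\|_{L^s(\hat K)}$. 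Your stated relation $h^{2/s-2}\sum_\alpha|\mathcal{N}_{\alpha,g_h}(\sigma_h)|\eqsim\sum_\alpha|\hat{\mathcal{N}}_{\alpha,\tilde g}(\hat\sigma)|$ would count that factor twice.
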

\begin{proof}
    We provide the proof of \eqref{eq:norm-equ-gh}, and the proof of \eqref{eq:norm-equ-gh*} is similar, and therefore omitted.
    Let $F_K: \hat K \rightarrow K$ be an affine map from a reference triangle $\hat K \subset \mathbb{R}^2$ to $K \subset M_h$. 
    Denote $(F_K)_*: T\hat K \rightarrow TK$ the tangent map of $F_K$. 
    Let $\hat \partial_i, i = 1, 2,$ be a basis for the tangent space of $\hat K$.  Then $\bar  \partial_i = (F_K)_* \hat \partial_i$, $i = 1,2$, form a basis for the tangent space of $K$. Now, for $\sigma_h \in P_{r}S^0_2 (K)$, we denote its pullback as $\hat \sigma_h \in P_r(\hat K; \mathbb{R}^{2 \times 2}_{\rm sym})$, defined as: 
    \[ (\hat \sigma_h)_{i j} = \sigma_h(\bar \partial_i,\bar \partial_j)
    = \sigma_h((F_K)_* \hat \partial_i, (F_K)_* \hat \partial_j )
    = ((F_K)^* \sigma_h) (\hat \partial_i, \hat \partial_j), 
    \quad \text{on } \hat K. 
    \]
    Then, by a scaling argument \cite[Lemma 2.11]{li2018regge}, we have: 
    \begin{equation}\label{eq:K-hatK} 
    \|\sigma_h\|_{L^s(K)} \eqsim 
    \|\hat \sigma_h\|_{L^s(\hat K)} h^{2/s - 2}, \quad \forall s \in [1, \infty]. 
    \end{equation}  

    For the metric $g_h$ on $K$, we denote its pullback as $\hat g_h \in P_r(\hat K; \mathbb{R}^{2 \times 2}_{\rm sym})$, defined as: 
    \[ (\hat g_h)_{i j} = g_h(\bar\partial_i,\bar \partial_j)
    = g_h((F_K)_* \hat \partial_i, (F_K)_* \hat \partial_j )
    = ((F_K)^* g_h) (\hat \partial_i, \hat \partial_j), 
    \quad \text{on } \hat K. 
    \]
    Let $\hat \tau$ be the unit tangent vector of an edge $\hat e$ of the reference triangle $\hat K$.  Then it holds that 
    $ \tau_{g_h} = (F_K)_* \hat \tau / \sqrt{\hat \tau^{\top}\hat g_h \hat \tau}.$
    Let $(\hat g_h)^{k l}$ and $\det(\hat g_h)$ represent the components of the inverse of $\hat g_h$ and the determinant of $\hat g_h$. 
    Based on the boundedness result in \eqref{eq:bound-gh} and a scaling argument, we have 
    \begin{equation}\label{eq:hatg} 
    \begin{aligned}
        \| \hat g_h \|_{L^{\infty}(\hat K)} \eqsim 
    h^2 \|g_h\|_{L^{\infty}(K)} \eqsim h^2, \quad & 
    \|(\hat g_h)^{k l}\|_{L^{\infty}(\hat K)}  \eqsim h^{-2}, \\ 
    \| \sqrt{\hat \tau^{\top}\hat g_h \hat \tau}\|_{L^{\infty}(\hat K)}  \eqsim h, \quad  & 
    \|\det(\hat g_h)\|_{L^{\infty}(\hat K)}  \eqsim h^4.
    \end{aligned}
    \end{equation} 
    Now, for $\hat \sigma_h \in P_r(\hat K; \mathbb{R}^{2 \times 2}_{\rm sym}) $, we define the functional $\hat \N_\alpha: P_r(\hat K; \mathbb{R}^{2 \times 2}_{\rm sym}) \rightarrow \mathbb{R} $, $\alpha = 1, \dots, {\rm dim}P_r(\hat K; \mathbb{R}^{2 \times 2}_{\rm sym}) $, as
    \[ 
    \hat \N_\alpha (\hat \sigma_h) = 
    h\int_{\hat e} 
    \frac{(\hat \tau^{\top} \hat \sigma_h \hat \tau )\hat q_\alpha }{\sqrt{\hat \tau^{\top}\hat g_h \hat \tau}}
    \mathrm{d}\hat s, 
    \quad\text{or}\quad
    \hat \N_\alpha (\hat \sigma_h) = h^2\int_{\hat K} 
    \Tr(\hat g_h^{-1}\hat \sigma_h\hat g_h^{-1}\hat \rho_\alpha)
    \sqrt{\det(\hat g_h)} \mathrm{d} \hat x, 
    \] 
    where $\hat q_\alpha \in P_r(\hat e) $ and $\hat \rho_{\alpha} \in P_{r-1}(\hat K; \mathbb{R}^{2\times 2}_{\rm sym})$ are fixed functions on $\hat K$, satisfying \\
    $\|\hat q_\alpha\|_{L^{\infty}(\hat K)}  \eqsim 1$ and 
    $\|\hat \rho_\alpha\|_{L^{\infty}(\hat K)}  \eqsim 1$. 
    Based on the norm-equivalence in the finite dimensional space $P_r(\hat K; \mathbb{R}^{2 \times 2}_{\rm sym})$ and the estimate in \eqref{eq:hatg}, for any $s \in [1, \infty]$, it holds that
    \begin{equation}\label{eq:norm-equi-hatK} 
    \|\hat \sigma_h\|_{L^s(\hat K)} \eqsim 
    \sum_{\alpha = 1}^{{\rm dim}P_r(\hat K; \mathbb{R}^{2 \times 2}_{\rm sym})} 
    |\hat \N_\alpha (\hat \sigma_h)|, \quad 
    \forall \hat \sigma_{{h}} \in P_r(\hat K; \mathbb{R}^{2 \times 2}_{\rm sym}), 
    \end{equation} 
    where the hidden constant is independent of $h$.

    Define $\rho_\alpha \in P_{r-1}S^0_2 (K)$ and $q_\alpha \in P_{r}(e)$ as 
    \[ \rho_{\alpha} = h^2(\hat \rho_\alpha)_{i j}\circ(F_K)^{-1}
    \mathrm{d} x^i \mathrm{d} x^j, 
    \quad \text{and} \quad 
    q_\alpha = h \hat q_\alpha\circ (F_K)^{-1}, 
    \quad \text{on } K, \]
    where $\mathrm{d} x^i$, $i = 1,2$, are covectors satisfying $\mathrm{d} x^i(\bar \partial_j) = \delta_j^i$. Then, it holds that 
    \[ \hat \N_\alpha (\hat \sigma_h) = 
    h\int_{\hat e}
    \frac{(\hat \tau^{\top} \hat \sigma_h \hat \tau )\hat q_\alpha }{\sqrt{\hat \tau^{\top}\hat g_h \hat \tau}}\mathrm{d}\hat s= 
    \langle\sigma_h(\tau_{g_h},\tau_{g_h}),q_\alpha\rangle_{g_h,e} =  \N_{\alpha,g_h} (\sigma_h), 
    \]
    and 
    \[ \hat \N_\alpha (\hat \sigma_h) = h^2\int_{\hat K} 
    \Tr(\hat g_h^{-1}\hat \sigma_h\hat g_h^{-1}\hat \rho_\alpha)
    \sqrt{\det(\hat g_h)} \mathrm{d} \hat x = 
    (\sigma_h, \rho_\alpha)_{g_h, K} = \N_{\alpha,g_h} (\sigma_h). 
    \]
    Combining the above two formulas with \eqref{eq:norm-equi-hatK} and \eqref{eq:K-hatK}, we prove \eqref{eq:norm-equ-gh}.
    The estimate in \eqref{eq:esti_rho_q} can be proved using a scaling argument, i.e., 
    \[ \| \rho_\alpha \|_{L^\infty(K)} \eqsim 
    h^{-2}\| h^2 \hat \rho_\alpha \|_{L^\infty(\hat K)} \eqsim 1, 
    \quad \text{and} \quad 
    \| q_\alpha\|_{L^\infty(K)} \eqsim  
    \|h \hat q_\alpha\|_{L^{\infty}(\hat K)} \eqsim h.  \] 
\end{proof}

    \noindent \textit{Proof of Lemma \ref{lm:stab-Pgh-Pgh*}:} 
        Using Lemma \ref{lm:norm-equi-Regge} and the definition of $P_{g_h}$ in \eqref{eq:Regge-interpolation}, it holds that 
        \begin{equation}\label{eq:stab-inf-1} 
        \| P_{g_h } \sigma \|_{L^{\infty}(K)} \eqsim h^{-2}
         \sum_{\alpha = 1}^{{\rm dim}P_{r}S^0_2 (K)} 
        |\N_{\alpha, g_h}(P_{g_h } \sigma)| =  h^{-2}
         \sum_{\alpha = 1}^{{\rm dim}P_{r}S^0_2 (K)} 
        |\N_{\alpha, g_h}( \sigma)|. 
        \end{equation}  
        For each $\alpha = 1,\dots,{\rm dim}P_{r}S^0_2 (K)$, using \eqref{eq:esti_rho_q}, we obtain
        \begin{equation}\label{eq:stab-inf-2} 
        |\N_{\alpha, g_h}( \sigma)| = 
        \begin{cases}
            (\sigma, \rho_\alpha)_{g_h, K} \lesssim 
            \|\sigma\|_{L^{\infty}(K)} h^2, \\
            \langle \sigma(\tau_{g_h},\tau_{g_h}), q_\alpha \rangle_{g_h, e} \lesssim 
            \|\sigma\|_{L^{\infty}(e)} h^2.
        \end{cases}
        \end{equation} 
        By combining \eqref{eq:stab-inf-1} and \eqref{eq:stab-inf-2}, we prove the first estimate in \eqref{eq:stab-inf}. The proof of the second estimate in \eqref{eq:stab-inf} is similar, therefore omitted. 
    
        Now, we assume $\sigma$ is a polynomial symmetric $(0,2)$-tensor field on $K$, for $1 \leq s \leq +\infty$, using Lemma \ref{lm:norm-equi-Regge}, we have 
        \begin{equation}\label{eq:stab-Ls-1} 
            \| P_{g_h } \sigma \|_{L^{s}(K)} \eqsim h^{2/s-2}
             \sum_{\alpha = 1}^{{\rm dim}P_{r}S^0_2 (K)} 
            |\N_{\alpha, g_h}(P_{g_h } \sigma)| =  h^{2/s-2}
             \sum_{\alpha = 1}^{{\rm dim}P_{r}S^0_2 (K)} 
            |\N_{\alpha, g_h}( \sigma)|. 
        \end{equation} 
        For each $\alpha = 1,\dots,{\rm dim}P_{r}S^0_2 (K)$, using \eqref{eq:esti_rho_q}, we obtain
        \begin{equation}\label{eq:stab-Ls-2} 
        |\N_{\alpha, g_h}( \sigma)| = 
        \begin{cases}
            (\sigma, \rho_\alpha)_{g_h, K} \lesssim 
            \|\sigma\|_{L^{s}(K)} h^{2-2/s} , \\
            \langle \sigma(\tau_{g_h},\tau_{g_h}), q_\alpha \rangle_{g_h, e} \lesssim 
            \|\sigma\|_{L^{s}(e)} h^{2-1/s}
            \lesssim \|\sigma\|_{L^{s}(K)} h^{2-2/s},
        \end{cases}
        \end{equation}
        where in the last step, we utlize the trace estimate for the polynomial $(0,2)$ tensor field $\sigma$. By combining \eqref{eq:stab-Ls-1} and \eqref{eq:stab-Ls-2}, we prove the first estimate of \eqref{eq:stab-Ls}. The proof of the second estimate in \eqref{eq:stab-Ls} is similar, therefore omitted. 
    
        We now turn to the estimate \eqref{eq:stab-Pgh-Pgh*}. From Lemma \ref{lm:norm-equi-Regge}, we have 
        \begin{equation}\label{eq:stab-Pgh-Pgh*-1} 
        \begin{aligned}
            \| (P_{g_h^* } - & P_{g_h }) \sigma \|_{L^s(K)} 
            \eqsim h^{2/s-2} 
             \sum_{\alpha = 1}^{{\rm dim}P_{r}S^0_2 (K)} 
            |\N_{\alpha, g_h^*}(P_{g_h^* }\sigma) - 
            \N_{\alpha, g_h^*}(P_{g_h }\sigma)| \\ 
            & \leq h^{2/s-2} 
             \sum_{\alpha = 1}^{{\rm dim}P_{r}S^0_2 (K)} 
            |\N_{\alpha, g_h^*}(\sigma) - 
            \N_{\alpha, g_h}(\sigma)| + 
            |\N_{\alpha, g_h}(P_{g_h }\sigma) - 
            \N_{\alpha, g_h^*}(P_{g_h }\sigma)|.
        \end{aligned}
        \end{equation} 
        For the first term, it holds that 
        \[ 
        \begin{aligned}
            & \quad |\N_{\alpha, g_h^*}(\sigma) - 
            \N_{\alpha, g_h}(\sigma)| \\ 
            & = \begin{cases}
        |(\sigma, \rho_\alpha)_{g_h^*, K} - 
        (\sigma, \rho_\alpha)_{g_h, K}|  \lesssim 
        h^{2 - 2/s} 
        \| g_h  - g_h^*  \|_{L^s(K)} \|\sigma\|_{L^{\infty}(K)},
        \\
        |\langle \sigma(\tau_{g_h^*},\tau_{g_h^*}), q_\alpha \rangle_{g_h^*, e} - 
        \langle \sigma(\tau_{g_h},\tau_{g_h}), q_\alpha \rangle_{g_h, e}|  \lesssim h^{2 - 1/s} 
        \| g_h  - g_h^*  \|_{L^s(e)} \|\sigma\|_{L^{\infty}(e)}.
        \\
        \end{cases} 
        \end{aligned}
        \]
        Combining this with the trace estimate, $\| g_h  - g_h^*  \|_{L^s(e)} \lesssim h^{-1/s}\| g_h  - g_h^*  \|_{L^s(K)} $, we obtain
        \begin{equation}\label{eq:stab-Pgh-Pgh*-2} 
            |\N_{\alpha, g_h^*}(\sigma) - 
            \N_{\alpha, g_h}(\sigma)| \lesssim 
            h^{2 - 2/s} 
            \| g_h  - g_h^*  \|_{L^s(K)} \|\sigma\|_{L^{\infty}(K)}. 
        \end{equation} 
        Similarly, we have 
        \begin{equation}\label{eq:stab-Pgh-Pgh*-3} 
            \begin{aligned}
                |\N_{\alpha, g_h^*}(P_{g_h }\sigma) - 
            \N_{\alpha, g_h}(P_{g_h }\sigma)| & \lesssim h^{2 - 2/s} 
            \| g_h  - g_h^*  \|_{L^s(K)} \|P_{g_h }\sigma\|_{L^{\infty}(K)} \\
            & \lesssim h^{2 - 2/s} 
            \| g_h  - g_h^*  \|_{L^s(K)} \|\sigma\|_{L^{\infty}(K)}.  
            \end{aligned} 
        \end{equation} 
        By combining \eqref{eq:stab-Pgh-Pgh*-1}, \eqref{eq:stab-Pgh-Pgh*-2} and \eqref{eq:stab-Pgh-Pgh*-3}, we prove \eqref{eq:stab-Pgh-Pgh*}.  

    \noindent \textit{Proof of Lemma \ref{lm:app-Projection}:} 
        Let $P_h \sigma \in \Sigma_h$ be the canonical Regge interpolation with respect to the induced Euclidean metric. Then it holds that
        \( \| P_{h} \sigma - \sigma \|_{L^\infty(M_h)} \lesssim h^{r+1}. \)
        Utilizing the identity $P_{h} \sigma = P_{g_h^*(t)} P_{h} \sigma $, we have 
        \[
            \| P_{g_h^*(t)} \sigma - \sigma \|_{L^\infty(M_h)} 
            \leq \| P_{g_h^*(t)} (\sigma - P_{h} \sigma)\|_{L^{\infty}(M_h)}  
            + \| P_{h} \sigma - \sigma \|_{L^{\infty}(M_h)} 
            \lesssim h^{r+1},
        \]
        where we use the $L^\infty$ stability of $P_{g_h^*(t)}$ in \eqref{eq:stab-inf}. 
        The second estimate in \eqref{eq:app-Projection} can be proven similarly as follows:
        \[
        \begin{aligned}
        \| \Pi_{g_h^*(t)} \sigma - \sigma \|_{L^2(M_h)} & \lesssim 
        \| \Pi_{g_h^*(t)} (\sigma - P_{h} \sigma) \|_{L^2(g_h^*, M_h)} + 
        \| P_{h} \sigma - \sigma \|_{L^2(M_h)}  
         \\ 
        & \lesssim  \| \sigma - P_{g_h^*(t)} \sigma \|_{L^2(g_h^*, M_h)} + h^{r+1} 
        \lesssim  h^{r+1}.
        \end{aligned}
        \]
        

\end{document}